\documentclass[twoside,leqno,10pt, A4]{amsart}
\usepackage{amsfonts}
\usepackage{amsmath}
\usepackage{amscd}
\usepackage{amssymb}
\usepackage{amsthm}
\usepackage{amsrefs}
\usepackage{latexsym}
\usepackage{mathrsfs}
\usepackage{bbm}
\usepackage{enumerate}
\usepackage{graphicx}

\usepackage{color}
\setlength{\textwidth}{18.2cm}
\setlength{\oddsidemargin}{-0.7cm}
\setlength{\evensidemargin}{-0.7cm}
\setlength{\topmargin}{-0.7cm}
\setlength{\headheight}{0cm}
\setlength{\headsep}{0.5cm}
\setlength{\topskip}{0cm}
\setlength{\textheight}{23.9cm}
\setlength{\footskip}{.5cm}

\begin{document}

\newtheorem{theorem}[subsection]{Theorem}
\newtheorem{proposition}[subsection]{Proposition}
\newtheorem{lemma}[subsection]{Lemma}
\newtheorem{corollary}[subsection]{Corollary}
\newtheorem{conjecture}[subsection]{Conjecture}
\newtheorem{prop}[subsection]{Proposition}
\numberwithin{equation}{section}
\newcommand{\mr}{\ensuremath{\mathbb R}}
\newcommand{\mc}{\ensuremath{\mathbb C}}
\newcommand{\dif}{\mathrm{d}}
\newcommand{\intz}{\mathbb{Z}}
\newcommand{\ratq}{\mathbb{Q}}
\newcommand{\natn}{\mathbb{N}}
\newcommand{\comc}{\mathbb{C}}
\newcommand{\rear}{\mathbb{R}}
\newcommand{\prip}{\mathbb{P}}
\newcommand{\uph}{\mathbb{H}}
\newcommand{\fief}{\mathbb{F}}
\newcommand{\majorarc}{\mathfrak{M}}
\newcommand{\minorarc}{\mathfrak{m}}
\newcommand{\sings}{\mathfrak{S}}
\newcommand{\fA}{\ensuremath{\mathfrak A}}
\newcommand{\mn}{\ensuremath{\mathbb N}}
\newcommand{\mq}{\ensuremath{\mathbb Q}}
\newcommand{\half}{\tfrac{1}{2}}
\newcommand{\f}{f\times \chi}
\newcommand{\summ}{\mathop{{\sum}^{\star}}}
\newcommand{\chiq}{\chi \bmod q}
\newcommand{\chidb}{\chi \bmod db}
\newcommand{\chid}{\chi \bmod d}
\newcommand{\sym}{\text{sym}^2}
\newcommand{\hhalf}{\tfrac{1}{2}}
\newcommand{\sumstar}{\sideset{}{^*}\sum}
\newcommand{\sumprime}{\sideset{}{'}\sum}
\newcommand{\sumprimeprime}{\sideset{}{''}\sum}
\newcommand{\shortmod}{\ensuremath{\negthickspace \negthickspace \negthickspace \pmod}}
\newcommand{\V}{V\left(\frac{nm}{q^2}\right)}
\newcommand{\sumi}{\mathop{{\sum}^{\dagger}}}
\newcommand{\mz}{\ensuremath{\mathbb Z}}
\newcommand{\leg}[2]{\left(\frac{#1}{#2}\right)}
\newcommand{\muK}{\mu_{\omega}}
\newcommand{\sumn}{\sumstar_{(c,1+i)=1}  w\left( \frac {N(c)}X \right)}
\newcommand{\LL}{\mathcal{L}}

\makeatletter
\def\widebreve{\mathpalette\wide@breve}
\def\wide@breve#1#2{\sbox\z@{$#1#2$}%
     \mathop{\vbox{\m@th\ialign{##\crcr
\kern0.08em\brevefill#1{0.8\wd\z@}\crcr\noalign{\nointerlineskip}%
                    $\hss#1#2\hss$\crcr}}}\limits}
\def\brevefill#1#2{$\m@th\sbox\tw@{$#1($}%
  \hss\resizebox{#2}{\wd\tw@}{\rotatebox[origin=c]{90}{\upshape(}}\hss$}
\makeatletter

\title[Lower order terms of the
one level density of a family of quadratic Hecke $L$-functions]{Lower order terms of the
one level density of a family of quadratic Hecke $L$-functions}

\date{\today}
\author{Peng Gao and Liangyi Zhao}

\begin{abstract}
In this paper, we study lower order terms of the $1$-level density of low-lying zeros of quadratic Hecke L-functions in the Gaussian field. Assuming the Generalized Riemann Hypothesis, our result is valid for even test functions whose Fourier transforms are supported in $(-2, 2)$.  Moreover, we apply the ratios conjecture of $L$-functions to derive these lower order terms as well.  Up to the first lower order term, we show that our results is consistent with each other, when the Fourier transforms of the test functions are supported in $(-2, 2)$.
\end{abstract}

\maketitle

\noindent {\bf Mathematics Subject Classification (2010)}: 11F11, 11L40, 11M06, 11M26, 11M50  \newline

\noindent {\bf Keywords}: one level density, low-lying zeros, quadratic Hecke $L$-functions

\section{Introduction}

 The work of H. L. Montgomery on the pair-correlation of zeros of $\zeta(s)$ in \cite{HM1} revealed, for the first time, the ties between zeros of $L$-functions and eigenvalues of random matrices. In more recent years, there has been a growing interest in the study on low-lying zeros of $L$-functions due to important roles they play in problems such as determining the rank of the Mordell-Weil groups of elliptic curves and the size of class numbers of imaginary quadratic
number fields. The relation between these low-lying zeros and the random matrices is predicted by the density conjecture of N. Katz and P. Sarnak \cites{KS1, K&S}, which asserts that the distribution of zeros near the central point of a family of $L$-functions is the same as that of the eigenvalues near $1$ of a corresponding classical compact group. \newline

   A rich literature exists on the density conjecture for various families of $L$-functions including the
Dirichlet \cites{Gao, O&S, Ru, Miller1, HuRu}, Hecke \cite{FI, M&P}, automorphic \cites{G&Zhao2, ILS, DuMi2, HuMi, RiRo, Royer}, elliptic curve \cites{SBLZ1, HB1, Brumer, SJM, Young}, Dedekind \cite{SST, Yang}, Artin \cite{ChoKim} and symmetric power $L$-functions \cite{Gu2, DM}. Among these various families, the investigation of the families of quadratic Dirichlet $L$-functions has a relative long history. It was first carried by  A. E. \"{O}zluk and C. Snyder in \cite{O&S} on the $1$-level density of low-lying zeros of the family, under the assumption of the Generalized Riemann Hypothesis (GRH). Further work in this direction can be found in  \cites{Gao, Ru, Miller1}. \newline

   The density conjecture predicts the main term behavior of the $n$-level density of low-lying zeros of families of $L$-functions for all $n$.  One can actually do more on the number theory side by computing the lower order terms of these $n$-level densities. These lower order terms serve to provide better understandings on the $n$-level densities.  Examples of such computations can be found in \cite{Young20015, RR, Miller2009}. \newline

   For the family of quadratic Dirichlet $L$-functions, the lower order terms of $1$-level density was first analyzed by S. J. Miller in \cite{Miller1}  for test functions whose Fourier transforms are supported in $(-1, 1)$. On the other hand, we note that the above-mentioned result of \"{O}zluk and Snyder \cite{O&S} on the $1$-level density is valid on GRH as long as the Fourier transforms of test functions are supported in $(-2, 2)$. Thus, one expects that the computation of the corresponding lower order terms for all such functions should be possible. This was indeed achieved by a recent result of D. Fiorilli, J. Parks and A. S\"{o}dergren in \cite{FPS} on GRH. \newline

  In \cite{G&Zhao4}, we studied the $1$-level density of low-lying zeros of quadratic Hecke $L$-functions in the Gaussian field. Assuming GRH, we showed that  our result confirms the density conjecture when the Fourier transforms of test functions are supported in $(-2, 2)$, a result analogous to that of the family of quadratic Dirichlet $L$-functions. In view of this, it is natural to ask whether one can compute the lower order terms as well, as in \cite{FPS}. We also point out here that in \cite{Waxman}, E. Waxman computed low order terms of the $1$-level density for a symplectic family of $L$-functions attached to Hecke characters of infinite order in the Gaussian field.  It is our goal in this paper to continue our work in this direction.  \newline

We shall write $K=\mq(i)$ for the Gaussian field and $\mathcal{O}_K=\mz[i]$ for the ring of integers in $K$. We write $N(n)$ for the norm of an element $n \in \mathcal{O}_K$ and we reserve the symbol $\chi_n$ for the quadratic Hecke character $\leg {n}{\cdot}$ defined in Section \ref{NTbackground}. We denote $\zeta_K(s)$ for the Dedekind zeta function of $K$. We assume GRH throughout this paper and we are concerned with the following family of $L$-functions:
$$ \mathcal F = \big\{ L(s,\chi_{i(1+i)^5c}) : c \hspace{0.1in} \text{square-free}, (c, 1+i)=1 \big\}.$$
  Let $L(s, \chi)$ be one of the $L$-functions in $\mathcal F$ and we write $\chi$ here in brief for the corresponding Hecke character. We denote the non-trivial zeroes of $L(s, \chi)$  by $1/2+i \gamma_{\chi, j}$ so that $\gamma_{\chi, j} \in \mr$ under GRH.  We order them as
\begin{equation*}
    \ldots \leq
    \gamma_{\chi, -2} \leq
    \gamma_{\chi, -1} < 0 \leq \gamma_{\chi, 1} \leq  \gamma_{\chi, 2} \leq
   \ldots.
\end{equation*}
   Let $X$ a large real number and we set $\mathcal{L}=\log X$ throughout the paper and we normalize the zeros by defining
\begin{align*}
    \tilde{\gamma}_{\chi, j}= \frac{\gamma_{\chi, j}}{2 \pi} \mathcal{L}.
\end{align*}

   Fix two even Schwartz class functions $\phi, w$ such that $w$ is non-zero and non-negative. We shall regard $\phi$ as a test function and $w$ as a weight function.  We define the $1$-level density for the single $L$-function $L(s, \chi)$ with respect to $\phi$ by the sum
\begin{equation*}
S(\chi, \phi)=\sum_{j} \phi(\tilde{\gamma}_{\chi, j}).
\end{equation*}
   The $1$-level density of the family $\mathcal F $ with respect to $w$ is then defined as the weighted sum
\begin{align}
\label{D}
  D(\phi;w, X) =\frac 1{W(X)}\sumstar_{c}  w\left( \frac {N(c)}X \right)  S(\chi_{i(1+i)^5c}, \phi),
\end{align}
    where we use $\sumstar$ to denote a sum over square-free elements in $\mathcal{O}_K$ throughout the paper and $W(X)$ here is the total weight given by
\begin{align*}
  W(X)=\sumstar_{c}  w\left( \frac {N(c)}X \right).
\end{align*}

  Our first result in this paper is an asymptotic expansion of $D(\phi;w, X)$ in descending powers of $\log X$, given in the following
\begin{theorem}
\label{quadraticmainthm}
Suppose that GRH holds for the family of $L$-functions in $\mathcal F $ as well as for $\zeta_K(s)$. Let $\phi(x)$ be an even Schwartz function whose
Fourier transform $\hat{\phi}(u)$ has compact support in $(-2,2)$ and let $w$ be an even non-zero and non-negative Schwartz function. Let $D(\phi;w, X)$ be defined as in \eqref{D}. Then we have for any integer $M \geq 1$,
\begin{align}
\label{Dexpansion}
D(\phi;w, X) =  \widehat \phi(0)-\frac{1}2\int\limits_{-1}^{1} \widehat \phi(u)\, \dif u +\sum_{m=1}^M \frac {R_{w,m}(\phi)}{\mathcal{L}^m} +O\left( \frac 1{\mathcal{L}^{M+1}}\right),
\end{align}
 where the coefficients $R_{w,m}(\phi)$ are linear functionals in $\phi$ that can be given explicitly in terms of $w$ and the derivatives of $\widehat\phi$ at the points $0$ and $1$ (see \eqref{Rwm}).
\end{theorem}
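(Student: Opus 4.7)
The plan is to follow the explicit-formula / Poisson-summation scheme of Fiorilli--Parks--S\"odergren \cite{FPS}, adapted to the Gaussian field as in the authors' earlier work \cite{G&Zhao4} and then refined to produce a full asymptotic expansion in descending powers of $\LL$. I would first apply the Weil-type explicit formula to each $L(s,\chi_{i(1+i)^5 c})$ to write
\[
S(\chi_{i(1+i)^5 c}, \phi) = \widehat{\phi}(0)\cdot\frac{\log N(c)}{\LL} + A_{\infty}(\phi;X) - \frac{2}{\LL}\sum_{\mathfrak n} \frac{\Lambda_K(\mathfrak n)\,\chi_{i(1+i)^5 c}(\mathfrak n)}{N(\mathfrak n)^{1/2}} \widehat{\phi}\!\left(\frac{\log N(\mathfrak n)}{\LL}\right),
\]
where $\Lambda_K$ is the von Mangoldt function on $\mathcal{O}_K$ and $A_{\infty}(\phi;X)$ is the contribution from the archimedean $\Gamma_{\mathbb C}$-factor. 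Averaging against $w(N(c)/X)/W(X)$ and Taylor-expanding both $\log N(c)/\LL = 1+\log(N(c)/X)/\LL$ and $(\Gamma'_{\mathbb C}/\Gamma_{\mathbb C})(\tfrac12+2\pi i x/\LL)$ in powers of $\LL^{-1}$ produces the $\widehat{\phi}(0)$ summand in \eqref{Dexpansion} together with a full series in $\LL^{-1}$ whose coefficients are Mellin-type moments of $w$.

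The inner prime-power sum then splits into squares and non-squares. For $\mathfrak n = \mathfrak m^2$ with $\mathfrak m$ unramified, $\chi_{i(1+i)^5 c}(\mathfrak m^2)=1$ whenever $(\mathfrak m,c)=1$, so after removing this coprimality condition by M\"obius inversion in $\mathfrak m$ the resulting sum becomes independent of $c$; its Mellin representation has a simple pole at $s=\tfrac12$ coming from $\zeta_K(2s)$, and shifting the contour past this pole while collecting the residue gives the diagonal contribution $-\tfrac12\phi(0)$ together with an explicit $\LL^{-1}$-series whose coefficients involve derivatives of $\widehat{\phi}$ at $\pm 1$ and local factors of $\zeta_K$ at $1+i$. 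For $\mathfrak n$ not a square, I would apply the quadratic reciprocity law of Section~\ref{NTbackground} to move the Hecke symbol onto $c$, remove the square-free constraint on $c$ by a further M\"obius inversion, and apply Poisson summation on $\mathcal{O}_K = \mz[i]$ modulo a suitable divisor of $\mathfrak n$. The zero frequency of the dual sum delivers the extra main term which, combined with $-\tfrac12\phi(0)$, rebuilds the symplectic density $-\tfrac12\int_{-1}^{1}\widehat{\phi}(u)\,du$ out of the full support $(-2,2)$, and it carries its own $\LL^{-1}$-expansion obtained by contour shifts against the Mellin transforms of $w$ and $\zeta_K$. The nonzero frequencies are bounded using GRH for $\zeta_K$ and for the family $\mathcal F$, contributing at most $O(X^{-\delta})$ for some $\delta>0$ and hence absorbable into the error $O(\LL^{-M-1})$.

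Assembling the three expansions above and collecting the coefficient of $\LL^{-m}$ then produces the linear functional $R_{w,m}(\phi)$ in $\widehat{\phi}$ and its derivatives at $0$ and $1$ announced in \eqref{Rwm}. The main technical obstacle is the off-diagonal step: the Poisson dual sum has length of order $X$ when $\operatorname{supp}\widehat{\phi}$ extends to the boundary of $(-2,2)$, and one must prove that every nonzero dual frequency contributes a genuine power-of-$X$ saving. This is where the full strength of GRH, both for $\zeta_K$ and for every $L$-function in $\mathcal F$, is needed, through polynomial growth bounds along vertical lines for the Dirichlet series twisted by the Hecke symbol $\leg{\cdot}{\cdot}$. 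A secondary but nontrivial technical point is the careful bookkeeping of the successive M\"obius inversions (for coprimality and for square-freeness) through the contour shifts, so that no residue contribution is double-counted when the final expansion is assembled.
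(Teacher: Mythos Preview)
Your outline follows the right overall scheme (explicit formula, split into $S_{\text{even}}$ and $S_{\text{odd}}$, Poisson on the odd part), but the mechanism you describe for the off-diagonal is wrong in a way that would make the argument collapse.

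First, a minor point: the square prime-powers $\varpi^{2j}$ produce $\widehat\phi(2j\log N(\varpi)/\LL)$, which is evaluated near $0$, so their Taylor expansion gives derivatives of $\widehat\phi$ at $0$, not at $\pm 1$ as you say. In the paper this is Lemma~\ref{lemma Seven}, yielding coefficients $d_m\widehat\phi^{(m-1)}(0)$.

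The serious issue is your treatment of the Poisson dual sum for $S_{\text{odd}}$. You claim the zero frequency delivers the transition term $\int_1^\infty\widehat\phi(u)\,\dif u$ and that \emph{all} nonzero frequencies are $O(X^{-\delta})$. In fact the opposite occurs. After Poisson in $c$, the dual variable $k\in\mz[i]$ enters through the Gauss sum $g(k,\varpi)$; since $g(0,\varpi)=\sum_{x\bmod\varpi}\leg{x}{\varpi}=0$, the zero frequency vanishes identically (this is exactly why the conductor $i(1+i)^5$ was inserted). The nonzero frequencies then split according to whether $k$ is a perfect square in $\mz[i]$: the non-square $k$ give a genuinely nonprincipal quadratic character in $\varpi$ and are bounded by $ZX^{\sigma/2-1+\varepsilon}$ via GRH, while the \emph{square} $k$ (with $k\neq 0$) are the source of both the main term $\int_1^\infty\widehat\phi(u)\,\dif u$ and of all the lower-order coefficients $c_{w,m}\widehat\phi^{(m-1)}(1)$. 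In the paper this is the content of Lemmas~\ref{lemma:small s}--\ref{LemSodd} and Lemma~\ref{lemma expansion}: after writing $k=k_0^2$ and applying partial summation in $\varpi$ against the prime number theorem, one is left with an integral $J(X)$ over $[0,\infty)$ of $\widehat\phi(1\pm\tau/\LL)$ against rapidly decaying weights $h_1,h_2$, whose Taylor expansion at $\tau=0$ produces the derivatives of $\widehat\phi$ at $1$. Your proposal, by bounding all nonzero $k$ by $O(X^{-\delta})$, would throw away exactly the term that reconstitutes the symplectic density for support in $(1,2)$, and would therefore give the wrong main term whenever $\sigma>1$.
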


   We note that Theorem \ref{quadraticmainthm} gives a refinement of \cite[Theorem 1.1]{G&Zhao4}, which can be regarded as computing only the main term of the expansion for $\phi$ given in \eqref{Dexpansion}. Our result is similar to \cite[Theorem 1.1]{FPS}, and we follow closely many of the steps in \cite{FPS} in the proof of Theorem~\ref{quadraticmainthm}.  Additionally, our proof of Theorem ~\ref{quadraticmainthm}  proceeds along the same lines as that of \cite[Theorem 1.1]{G&Zhao4} with extra efforts to keep track of all the lower order terms. \newline

 When deriving these lower order terms, a powerful tool to deploy is the $L$-functions ratios conjecture of J. B. Conrey, D. W. Farmer and M. R. Zirnbauer in \cite[Section 5]{CFZ}. This approach was applied by J. B. Conrey and N. C. Snaith in \cite{CS} to study the $1$-level density function for zeros of quadratic Dirichlet $L$-functions. The general $n$-level density of the same family was carried out by A. M. Mason and N. C. Snaith in \cite{MS1}, and further enabled them to show in \cite{MS} that the result agrees with the density conjecture when the Fourier transforms of test functions are supported in $(-2, 2)$. \newline

  It is then highly desirable and interesting to compare the expressions for the $n$-level density functions conditional on the ratios conjecture to those obtained without the conjecture. For the family of quadratic Dirichlet $L$-functions, a result of S. J. Miller \cite{Miller1} matches the lower order terms of the $1$-level density function obtained with or without assuming the ratios conjecture, when the Fourier transforms of test functions are supported in $(-1, 1)$. When the support is enlarged to $(-2,2)$, D. Fiorilli, J. Parks and A. S\"{o}dergren obtained the lower order terms of the $1$-level density function in \cite{FPS, FPS1} by applying either the ratios conjecture or otherwise. Their work assumes GRH and the results obtained are further shown to match up to the first lower order term in \cite{FPS1}. \newline

Motivated by the above works, our next objective in the paper is to evaluate $D(\phi;w, X)$ using the ratios conjecture.  We shall formulate the appropriate version of the ratios conjecture concerning our family $\mathcal F$ in Conjecture  \ref{ratiosconjecture}, and use it to prove in Section \ref{The ratios conjecture's prediction} the following asymptotic expression of $D(\phi;w, X)$.
\begin{theorem}
\label{oneleveldensityresult}
 Assume the truth of GRH for the family of $L$-functions in $\mathcal F $ as well as for $\zeta_K(s)$ and Conjecture \ref{ratiosconjecture}. Let $w(t)$ be an even,
non-zero and non-negative Schwartz function and $\phi(x)$ an even Schwartz function whose Fourier transform $\widehat{\phi}(u)$ has
compact support, then for any $\varepsilon > 0$,
\begin{align}
\label{Drationconj}
\begin{split}
D(\phi;w, X)=\frac{1}{W(X)} & \sumn \frac{1}{2\pi}
\int\limits_{\mathbb R} \bigg(2 \frac{\zeta'_K(1+2it)}{\zeta_K(1+2it)} + 2A_{\alpha}(it,it)+\log \left(\frac {32N(c)}{\pi^2}\right) +\frac{\Gamma'}{\Gamma}\left(\frac 12-it\right)\\
&+ \frac{\Gamma'}{\Gamma}\left(\frac 12+it  \right)-\frac {8}{\pi}  X_{c}\left(\frac{1}{2}+it\right)\zeta_K(1-2it)A(-it,it) \bigg) \,
\phi\left(\frac{t\LL}{2\pi}\right) \, \dif t+ O_{\varepsilon}\big(X^{-1/2+\varepsilon}\big),
\end{split}
\end{align}
where the functions $X_c$, $A$ and $A_{\alpha}$ are given in \eqref{xe}, \eqref{defnofae} and Lemma~\ref{ratiostheorem}, respectively.
\end{theorem}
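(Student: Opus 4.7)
The plan is to follow the Conrey--Snaith approach \cite{CS}, adapted to the family $\mathcal F$ in the Gaussian field in the same spirit as the treatment of the quadratic Dirichlet family in \cite{CS, FPS1}. The first step is to express the inner sum via the argument principle as
\begin{equation*}
S(\chi, \phi) = \frac{1}{2\pi i} \left( \int_{(a)} - \int_{(1-a)} \right) \phi\left(\frac{(s-1/2)\mathcal{L}}{2\pi i}\right) \frac{L'}{L}(s, \chi)\, \dif s,
\end{equation*}
with $a>1$ chosen so that the contour, once closed at horizontal infinity (where $\phi$ has rapid decay), encloses precisely the non-trivial zeros of $L(s,\chi)$. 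Averaging over $c$ with weight $w(N(c)/X)$ and dividing by $W(X)$ then reduces the problem to the evaluation of the averaged logarithmic derivative of $L(s, \chi_{i(1+i)^5 c})$ integrated against the test function.

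On the $(1-a)$-line I would apply the functional equation for $L(s, \chi_{i(1+i)^5 c})$, which contributes the conductor logarithm $\log(32 N(c)/\pi^2)$, the gamma factors $\Gamma'/\Gamma(1/2 \pm it)$ after eventually specialising $s = 1/2 + it$, and the scattering factor $X_c(s)$ from \eqref{xe}. The $(a)$-integral then contributes a direct average of $L'/L$, while the $(1-a)$-integral, after the functional equation, contributes a twisted average essentially of the form $X_c(s) L'(1-s,\chi)/L(1-s,\chi)$. Both of these averages are obtained by logarithmic differentiation from Conjecture \ref{ratiosconjecture}: differentiating the conjectural formula for the averaged ratio $L(1/2+\alpha, \chi)/L(1/2+\gamma, \chi)$ with respect to $\alpha$ and specialising $\alpha = \gamma$ yields the averaged $L'/L$. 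The diagonal portion of the ratios conjecture produces the $2 \zeta_K'/\zeta_K(1+2it) + 2A_\alpha(it,it)$ term, while the swapped off-diagonal portion produces the $-(8/\pi) X_c(1/2+it) \zeta_K(1-2it) A(-it,it)$ term, the numerical constant $8/\pi$ coming from the residue of $\zeta_K$ at $s=1$ through the normalisation built into the conjecture.

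Finally I would shift both contours to the critical line $\Re(s)=1/2$ and change variables $s = 1/2 + it$, turning $\phi((s-1/2)\mathcal{L}/(2\pi i))$ into $\phi(t\mathcal{L}/(2\pi))$ and assembling the integrand in \eqref{Drationconj}. The error term $O_\varepsilon(X^{-1/2+\varepsilon})$ descends directly from the error term in Conjecture \ref{ratiosconjecture}, once one verifies that it survives both the logarithmic differentiation in $\alpha$ and the contour shifts. The main technical obstacle is the cancellation of apparent singularities near $t=0$: the poles of $\zeta_K(1 \pm 2it)$, the $1/(\alpha-\gamma)$-type poles inside the conjectural formula, and the singularities of $A$ and $A_\alpha$ must combine so that the bracketed integrand in \eqref{Drationconj} is in fact integrable against $\phi$. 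Verifying this cancellation, and tracking the precise constants that result, is the technical heart of the argument and parallels the analysis in \cite{CS, FPS, FPS1}; once it is completed, the remainder of the derivation is routine contour manipulation and bookkeeping.
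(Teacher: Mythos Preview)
Your proposal is correct and follows essentially the same route as the paper's proof in Section~\ref{The ratios conjecture's prediction}: differentiate Conjecture~\ref{ratiosconjecture} to obtain the averaged $L'/L$ (this is Lemma~\ref{ratiostheorem}, where the constant $4/\pi$ indeed comes from the residue $\pi/4$ of $\zeta_K$ at $s=1$), insert it into the two-contour argument-principle expression for $D(\phi;w,X)$ together with the $-X_c'/X_c$ term from the functional equation, observe that the combined integrand is analytic for $\Re(r)\geq 0$ (in particular at $r=0$), and shift to the critical line. One small correction to your bookkeeping: the $(1-a)$-line after the functional equation does not produce a twist of $L'/L$ by $X_c$, but rather the additive term $X_c'/X_c$ (which unpacks into the conductor logarithm and the two gamma terms) plus a second \emph{untwisted} copy of the averaged $L'/L$; this is why every ratios-conjecture contribution carries a factor of~$2$, and the $-(8/\pi)X_c\zeta_K A$ term arises entirely from the off-diagonal part of Lemma~\ref{ratiostheorem} applied to both copies, not from any $X_c$-twisting of the contour.
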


 Our next goal is then to compare the expression given for $D(\phi;w, X)$ in Theorem \ref{oneleveldensityresult} with the one obtained in Theorem~\ref{quadraticmainthm}.  To this end, we will prove (see Lemma~\ref{lemma explicit formula all d}), the following expression for $D(\phi;w, X)$ when $\phi$ is an even Schwartz test function with compactly supported Fourier transform.
\begin{align}
\label{Sstar}
\begin{split}
D(\phi;w, X) =  &\frac {\widehat \phi(0)}{ \LL W(X)}\underset{(c, 1+i) =1 }{\sum \nolimits^{*}} w\left( \frac {N(c)}X \right)  \log N(c) +\frac {\widehat\phi(0)}{\LL} \left( \log\frac {32}{\pi^2}+ 2\frac{\Gamma'}{\Gamma} \left( \frac 12 \right) \right ) \\
&\hspace*{1cm}  -\frac 2{\LL W(X)} \underset{(c, 1+i) =1 }{\sum \nolimits^{*}}  w\left( \frac {N(c)}X \right) \sum_{j \geq 1} S_j(\chi_{i(1+i)^5c}, \LL ; \hat{\phi}) +\frac{2}{\LL}\int\limits_0^\infty\frac{e^{-x/2}}{1-e^{-x}}\left(\widehat{\phi}(0)-\widehat{\phi}\left(\frac{x}{\LL}\right)\right) \dif x,
\end{split}
\end{align}
 where
\begin{equation*}
   S_j(\chi_{i(1+i)^5c}, \LL ;\hat{\phi})=\sum_{\varpi \equiv 1 \bmod {(1+i)^3}}\frac { \log
   N(\varpi)}{\sqrt{N(\varpi^j)}}\chi_{i(1+i)^5c} \left( \varpi^j \right)  \hat{\phi}\left( \frac {\log N(\varpi^j)}{\LL} \right)
\end{equation*}
with the sum over $\varpi$ running over primes in $\mathcal{O}_K$. Here we note that in $\mathcal{O}_K$, every ideal co-prime to $(1+i)$ has a unique generator congruent to $1$ modulo $(1+i)^3$ and such a generator is called primary. We shall use $\sum_{\varpi \equiv 1 \bmod {(1+i)^3}}$ (or sum over other variables) to indicate a sum over primary elements in $\mathcal{O}_K$. \newline

   For any function $W$, we denote its Mellin transform by $ \mathcal M W$, so that
\begin{align}
\label{MW}
      \mathcal M W(s) =\int\limits^{\infty}_0W(t)t^s\frac {\dif t}{t}.
\end{align}
   We further note that around $s=1$,
\begin{align}
\label{zetaKexpan}
  \zeta_K(s)=\frac {\pi}{4}\cdot \frac 1{s-1}+\gamma_K+O(|s-1|),
\end{align}
  where $\gamma_K$ is a constant. We write $\gamma=0.57\cdots$ for the Euler constant. \newline

  Our following result shows the agreement of the two expressions for $D(\phi;w, X)$ given in \eqref{Drationconj} and \eqref{Sstar} up to the first lower order term.
\begin{theorem}
\label{quadraticmainthmratio}
 Assume  the truth of GRH for the family of $L$-functions in $\mathcal F $ as well as for $\zeta_K(s)$ and Conjecture \ref{ratiosconjecture}.  Let $w(t)$ be an even,
non-zero and non-negative Schwartz function and $\phi(x)$ an even Schwartz function whose Fourier transform $\widehat{\phi}(u)$ has
compact support. Then the expression \eqref{Drationconj} gives that
\begin{align}
\label{Dexpansionratio}
\begin{split}
D(\phi;w, X) =\widehat \phi(0) &+ \int\limits^{\infty}_1 \widehat \phi(\tau) \ \dif \tau+ \frac{\widehat \phi(0)}{\LL}\left( \log \frac{32}{\pi^2} +2\frac{\Gamma'}{\Gamma}\left(\frac 12\right)+ \frac 2{\widehat w(0)}\int\limits_0^{\infty} w(x) \log x \ \dif x   \right) \\
& +\frac{2}{\LL} \int\limits_0^\infty\frac{e^{-t/2}}{1-e^{-t}}\left(\widehat{\phi}(0)-\widehat{\phi}\left(\frac{t}{\LL}\right)\right) \dif t \\
& - \frac{2}{\LL} \sum_{\substack{\varpi \equiv 1 \bmod {(1+i)^3} \\ j\geq 1}} \frac{\log N(\varpi)}{N(\varpi)^{j}} \left(  1+\frac 1{N(\varpi)} \right)^{-1} \widehat \phi\left( \frac{2j \log N(\varpi)}{\LL} \right) \\
& +\frac {\widehat \phi(1)}{\LL} \left( 2\gamma+ \log \left( \frac {\pi^2}{2^{7/3}} \right)+2\frac{\zeta'_K(2)}{\zeta_K(2)} -\frac {8}{\pi} \gamma_K -\frac{\mathcal M w'(1)}{\mathcal Mw(1)} \right)  +O\left( \LL^{-2} \right).
\end{split}
\end{align}
 Also, when $\text{sup}(\text{supp}(\hat{\phi}(u)))<2$, the above expression agrees with that given in \eqref{Sstar}.
\end{theorem}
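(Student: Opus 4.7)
The plan is to prove the theorem in two stages. Stage (i): starting from \eqref{Drationconj}, derive the explicit asymptotic expansion \eqref{Dexpansionratio}. Stage (ii): under the support restriction $\sup(\mathrm{supp}\,\widehat\phi)<2$, expand \eqref{Sstar} independently and verify that both expansions coincide modulo $O(\LL^{-2})$.

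For stage (i), I perform the substitution $\tau=t\LL/(2\pi)$, which converts the inner integral in \eqref{Drationconj} into $\LL^{-1}\int_{\mathbb R}\phi(\tau)(\cdots)\dif\tau$ with the bracket evaluated at $t=2\pi\tau/\LL$. The non-oscillatory pieces $\log(32N(c)/\pi^2)$, $\Gamma'/\Gamma(1/2\pm it)$, and $2A_{\alpha}(it,it)$ are Taylor-expanded around $t=0$. Averaging over $c$ via Mellin analysis (using $\sum^{\ast}_{(c,1+i)=1}w(N(c)/X)\log N(c)=W(X)(\LL+\mathcal M w'(1)/\mathcal M w(1))+O(X^{1/2+\varepsilon})$, derived from the Mellin--Perron representation) yields a $\widehat\phi(0)$ main term together with the $\widehat\phi(0)/\LL$ constants on line two of \eqref{Dexpansionratio}. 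The digamma piece also produces the integral
\begin{equation*}
\frac{2}{\LL}\int_0^\infty\frac{e^{-t/2}}{1-e^{-t}}\Bigl(\widehat\phi(0)-\widehat\phi(t/\LL)\Bigr)\dif t
\end{equation*}
via the standard identity for $\Gamma'/\Gamma(1/2+z)-\Gamma'/\Gamma(1/2)$. The delicate pieces are $2\zeta'_K(1+2it)/\zeta_K(1+2it)$ and the oscillating block $-\tfrac{8}{\pi}X_c(1/2+it)\zeta_K(1-2it)A(-it,it)$; using the explicit form of $X_c$ (with root number $+1$) the phase $(32N(c)/\pi^2)^{-2\pi i\tau/\LL}$ reduces to $e^{-2\pi i\tau}(1+O(\LL^{-1}))$ when $N(c)\asymp X$. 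The pole of $\zeta_K(1-2it)$ at $t=0$ cancels that of $\zeta'_K/\zeta_K(1+2it)$, and the resulting principal-value integral is evaluated by
\begin{equation*}
\frac{1}{2\pi i}\int_{\mathbb R}\phi(\tau)\,\frac{e^{-2\pi i\tau}-1}{\tau}\,\dif\tau=\int_{1}^{\infty}\widehat\phi(\tau)\,\dif\tau-\tfrac12\phi(0),
\end{equation*}
producing (with the regular parts absorbing the $-\tfrac12\phi(0)$) the main contribution $\widehat\phi(0)+\int_1^\infty\widehat\phi(\tau)\dif\tau$. The subleading Taylor coefficients of the $X_c\zeta_KA$ block give the $\widehat\phi(1)/\LL$ contribution with coefficient involving $\gamma_K$, $\zeta'_K(2)/\zeta_K(2)$ (via $A_\alpha(0,0)$), $\gamma$ (via $\Gamma'/\Gamma(1/2)$), and $\mathcal M w'(1)/\mathcal M w(1)$.

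For stage (ii), I expand \eqref{Sstar} directly. The first two lines reproduce the $\widehat\phi(0)+\widehat\phi(0)/\LL\cdot(\cdots)$ contribution via the same Mellin analysis, and the digamma integral is already identical. The prime sum decomposes by $j$: for odd $j\geq 3$, the $c$-average of $\chi_{i(1+i)^5c}(\varpi^j)$ is $O(X^{-1/2+\varepsilon})$ by a P\'olya--Vinogradov-type bound in $\mathbb Q(i)$ (as in \cite{G&Zhao4}); for even $j\geq 2$, the character becomes principal and the terms combine with $j=1$ into the compact sum on line three of \eqref{Dexpansionratio} via the squarefree-weighted geometric series $\sum_{k\geq 1}N(\varpi)^{-k}(1+N(\varpi)^{-1})^{-1}$. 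The $j=1$ contribution is the critical one: applying quadratic reciprocity in $\mathbb Q(i)$ flips the character on $\varpi$ into a character on $c$ modulo $\varpi$, and Poisson summation over primary $c$ (as in \cite{G&Zhao4}) produces a main term of Fourier type $\widehat\phi(1)/\LL$ whose coefficient works out to $2\gamma+\log(\pi^2/2^{7/3})+2\zeta'_K(2)/\zeta_K(2)-\tfrac{8}{\pi}\gamma_K-\mathcal M w'(1)/\mathcal M w(1)$, matching the coefficient produced in stage (i).

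The main obstacle is the precise identification of the $\widehat\phi(1)/\LL$ coefficient through both routes and showing that they coincide. In the ratios-conjecture route this coefficient is extracted algebraically from the subleading Taylor expansion of $X_c(1/2+it)\zeta_K(1-2it)A(-it,it)$ at $t=0$, using the Laurent coefficients of $\zeta_K$ at $s=1$ (which contribute $\gamma_K$) and the Euler-product derivative values of $A$ at $(0,0)$ (which contribute $\zeta'_K(2)/\zeta_K(2)$); in the explicit-formula route it emerges from the Poisson dual of the $j=1$ prime sum after quadratic reciprocity. Reconciling the two representations requires careful tracking of the constants produced by the archimedean Gamma factors and the arithmetic factor at the ramified place $(1+i)$, and mirrors (over the Gaussian field) the comparison carried out in \cite{FPS1} for the quadratic Dirichlet family over $\mathbb Q$.
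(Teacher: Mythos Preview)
Your outline is in the right direction, but the grouping of terms in stage~(i) differs from the paper's in a way that leaves a genuine gap, and stage~(ii) misidentifies which pieces of the prime sum produce which parts of \eqref{Dexpansionratio}.

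\textbf{Stage (i).} You propose to Taylor-expand $2A_\alpha(it,it)$ at $t=0$ and to combine $2\zeta'_K/\zeta_K(1+2it)$ with the $X_c\zeta_K A$ block via pole cancellation. But Taylor expansion of $A_\alpha$ produces only a constant $\tfrac{2\widehat\phi(0)}{\LL}A_\alpha(0,0)$, and no such term appears in \eqref{Dexpansionratio}. The prime sum on the third line of \eqref{Dexpansionratio} carries the factor $(1+N(\varpi)^{-1})^{-1}$, which cannot come from $\zeta'_K/\zeta_K$ alone. The paper's key observation is the identity
\[
A_\alpha(r,r)+\frac{\zeta'_K(1+2r)}{\zeta_K(1+2r)}=-\sum_{\varpi\equiv 1\bmod (1+i)^3}\frac{N(\varpi)\log N(\varpi)}{(N(\varpi)+1)(N(\varpi)^{1+2r}-1)},
\]
whose Dirichlet expansion, integrated against $\phi$, yields the third line of \eqref{Dexpansionratio} \emph{exactly}. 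With this grouping the $X_c\zeta_K A$ block (called $I$) is treated by itself: one shifts the $\tau$-contour to a small semicircle below $0$, Taylor-expands the remaining analytic factors, and obtains cleanly $\int_1^\infty\widehat\phi+\tfrac{\widehat\phi(1)}{\LL}(\cdots)$. In particular the term $-\tfrac12\phi(0)$ in your principal-value formula never arises; there is nothing for ``regular parts'' to absorb.

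\textbf{Stage (ii).} The compact prime sum on the third line of \eqref{Dexpansionratio} equals $S_{\text{even}}$, i.e.\ it comes \emph{only} from even $j\geq 2$ after averaging over $c$ (see \eqref{Seven}); it does not ``combine with $j=1$''. The $j=1$ contribution is $S_{\text{odd}}$, and its leading term after Poisson summation is the integral $\int_1^\infty\widehat\phi(u)\,\dif u$ (Lemma~\ref{LemSodd}), which you do not mention; the $\widehat\phi(1)/\LL$ constant is only the \emph{next} term $J(X)$. The paper then evaluates $J(X)$ by writing $h_1,h_2$ as Mellin inverses, invoking the functional identity of Lemma~\ref{Melliniden}, and computing a residue at $z=0$ involving $\zeta_K(0)$, $\zeta'_K(0)$ and an explicit evaluation of $\int_0^\infty(\log x)g'(x)\,\dif x$; this is how the matching of the $\widehat\phi(1)/\LL$ coefficients is actually carried out.
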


   We give the proof of Theorem \ref{quadraticmainthmratio} in Section \ref{Compartionofresults}. Our approach is inspired by the proof of  \cite[Theorems 1.1 and 1.4]{FPS1}, although the computation in our situation is more involved.

\section{Preliminaries}
\label{sec 2}

\subsection{Number fields background}
\label{NTbackground}

   Recall that in this paper, $K=\mq(i)$ is the Gaussian field. We denote $U_K$ for the group of units in ${\mathcal O}_K$, so that $U_K=\{ \pm 1, \pm i\}$. As it is well-known that $K$ has class number one, we shall not distinguish $n$ and $(n)$ when this causes no confusion from the context.  We therefore write $\mu_{[i]}(n)$ to mean the M\"obius function $\mu_{[i]}((n))$.  We shall use $\varpi$ to denote a prime (or prime ideal) in $K$ and $\Lambda(n)$ is the von Mangoldt function on $\mathcal{O}_K$ so that
\begin{align*}
    \Lambda(n)=\begin{cases}
   \log N(\varpi) \qquad & n=\varpi^k, \; \text{$\varpi$ prime}, \; k \geq 1 \\ \\
     0 \qquad & \text{otherwise}.
    \end{cases}
\end{align*}

Let $\leg {\cdot}{n}_4$ stand for the quartic
residue symbol on $\mathcal{O}_K$.  For a prime $\varpi \in \mathcal{O}_K$
with $N(\varpi) \neq 2$, the quartic symbol is defined for $a \in
\mathcal{O}_K$, $(a, \varpi)=1$ by $\leg{a}{\varpi}_4 \equiv
a^{(N(\varpi)-1)/4} \pmod{\varpi}$, with $\leg{a}{\varpi}_4 \in \{
\pm 1, \pm i \}$. When $\varpi | a$, we define
$\leg{a}{\varpi}_4 =0$.  Then the quartic symbol can be extended
to any composite $n$ with $(N(n), 2)=1$ multiplicatively. We further define $(\frac{\cdot}{n})=\leg {\cdot}{n}^2_4$  to be the quadratic
residue symbol for these $n$. \newline

    We say an element $c \in \mathcal{O}_K$ (or the ideal $(c)$) is odd if $(c, 1+i)=1$.  Note that in $\mathcal{O}_K$, every odd ideal has a unique generator congruent to $1$ modulo $(1+i)^3$.  Such a generator is called primary.  For two primary integers $m, n \in \mathcal{O}_K$, we note that the quadratic reciprocity law (see \cite[(2.1)]{G&Zhao4}) gives
\begin{align}
\label{quadrecip}
 \leg{m}{n} = \leg{n}{m}.
\end{align}

Let $\chi$ denote a Hecke character of $K$ and we say that $\chi$ is of trivial infinite type if its component at the infinite place of $K$ is trivial. In particular, $\chi_c$ defined earlier is a Hecke character of trivial infinite type. It is further shown in \cite[Section 2.1]{G&Zhao4} that when $c$ is square-free and co-prime to $1+i$, $\chi_{i(1+i)^5c}$ defines a primitive Hecke character $\text{mod} ((1+i)^5c)$ of trivial infinite type. \newline

    For any primitive Hecke character $\chi \pmod {m}$ of trivial infinite type,  let
    \[ \Lambda(s, \chi) = (|D_K|N(m))^{s/2}(2\pi)^{-s}\Gamma(s)L(s, \chi), \]
    where $L(s, \chi)$ is the the $L$-functions attached to $\chi$ and $D_K=-4$ is the discriminant of $K$. In particular, we use $\zeta_K(s)$ to denote the Dedekind zeta function of $K$. \newline

    It was shown by E. Hecke that $\Lambda(s, \chi)$ is an entire function and satisfies the following functional equation (\cite[Theorem 3.8]{iwakow})
\begin{align*}
  \Lambda(s, \chi) = W(\chi)(N(m))^{-1/2}\Lambda(1-s, \overline{\chi}),
\end{align*}
   where $|W(\chi)|=(N(m))^{1/2}$.

\subsection{Poisson Summation}

    For any $r,n \in \mathcal{O}_K$ with $n$ odd, we define the Gauss sum $g(r,n)$ as
\begin{align*}
 g(r,n) = \sum_{x \bmod{n}} \leg{x}{n} \widetilde{e}\leg{rx}{n}
\end{align*}
   where $ \widetilde{e}(z) =\exp \left( 2\pi i  (\frac {z}{2i} - \frac {\overline{z}}{2i}) \right)$.  It is shown in \cite[Lemma 2.2]{G&Zhao4} that, for a primary prime $\varpi$,
\[  g(r, \varpi)=\leg {ir}{\varpi}N(\varpi)^{1/2}. \]

    We quote the following Poisson summation formula from \cite[Lemma 2.7]{G&Zhao4}.
\begin{lemma}
\label{Poissonsum} Let $n \in \mathcal{O}_K$ be primary and $\chi$ a quadratic character $\pmod {n}$ of trivial infinite type. For any Schwartz class function $W$,  we have
\begin{equation} \label{quadpoi}
   \sum_{m \in \mathcal{O}_K}\chi(m)W\left(\frac {N(m)}{X}\right)=\frac {X}{N(n)}\sum_{k \in
   \mathcal{O}_K}g(k,n)\widetilde{W}\left(\sqrt{\frac {N(k)X}{N(n)}}\right)
\end{equation}
and
\begin{equation} \label{tripoi}
   \sum_{m \in \mathcal{O}_K} W\left(\frac {N(m)}{X}\right)=X \sum_{k \in
   \mathcal{O}_K}\widetilde{W}\left(\sqrt{N(k)X}\right),
\end{equation}
where
\begin{align}
\label{wtilde}
   \widetilde{W}(t) &=\int\limits^{\infty}_{-\infty}\int\limits^{\infty}_{-\infty}W(N(x+yi))\widetilde{e}\left(- t(x+yi)\right)\dif x \dif y, \quad t \geq 0.
\end{align}
\end{lemma}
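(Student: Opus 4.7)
The plan is to identify the Gaussian integers $\mathcal{O}_K=\mz[i]$ with the rank-two lattice $\mz^2 \subset \mr^2$ via $m = x+yi \leftrightarrow (x,y)$, so that both \eqref{quadpoi} and \eqref{tripoi} are reduced to the classical two-dimensional Poisson summation formula
\[
\sum_{(a,b)\in\mz^2} f(a,b) \;=\; \sum_{(k,l)\in\mz^2} \hat f(k,l), \qquad \hat f(k,l)=\int_{\mr^2} f(x,y)\, e^{-2\pi i(kx+ly)}\,\dif x\,\dif y,
\]
valid for any Schwartz $f$ on $\mr^2$. The function $N(m)=x^2+y^2$ makes $W(N(\cdot)/X)$ rotationally invariant, and this invariance is what will allow the Fourier transform on $\mr^2$ to be repackaged as the one-parameter transform $\widetilde{W}$ of the lemma.

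For \eqref{tripoi}, I would take $f(x,y)=W((x^2+y^2)/X)$ and compute $\hat f(k,l)$ by the substitution $x=\sqrt{X}\,u$, $y=\sqrt{X}\,v$, which gives
\[
\hat f(k,l) \;=\; X\int_{\mr^2} W(u^2+v^2)\, e^{-2\pi i\sqrt{X}(ku+lv)}\,\dif u\,\dif v.
\]
Because $W(u^2+v^2)$ is invariant under orthogonal transformations in $(u,v)$, I can rotate $(u,v)$ so that the linear form $ku+lv$ becomes $\sqrt{k^2+l^2}\,v'$. Unwinding $\widetilde{e}(-t(x+yi))=\exp(-2\pi i ty)$ shows that the rotated integral is exactly $\widetilde{W}(\sqrt{N(k+li)X})$, giving $\hat f(k,l)=X\,\widetilde{W}(\sqrt{N(k+li)X})$ and hence \eqref{tripoi}.

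For \eqref{quadpoi}, I would first break the sum over $m\in\mathcal{O}_K$ into residue classes modulo $n$, writing $m=a+nk$ for $a$ running over a set of representatives of $\mathcal{O}_K/n\mathcal{O}_K$ and $k$ over $\mathcal{O}_K$:
\[
\sum_{m \in \mathcal{O}_K}\chi(m)W\!\left(\tfrac{N(m)}{X}\right)
\;=\; \sum_{a\,\bmod\,n}\chi(a)\sum_{k\in\mathcal{O}_K} W\!\left(\tfrac{N(a+nk)}{X}\right).
\]
Applying the shifted form of the same two-dimensional Poisson formula to the inner sum (now over the sublattice $n\mathcal{O}_K$ of covolume $N(n)$ in $\mr^2$), the same rescaling and rotational-invariance argument as before produces $\frac{X}{N(n)}\,\widetilde{W}\!\bigl(\sqrt{N(k)X/N(n)}\bigr)$, while the translation by $a$ contributes the additive character $\widetilde{e}(ak/n)$. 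Exchanging the order of summation and using
\[
\sum_{a\,\bmod\,n}\chi(a)\,\widetilde{e}\!\left(\tfrac{ak}{n}\right) \;=\; g(k,n)
\]
yields \eqref{quadpoi}.

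The only steps that require care are (i) confirming that $\widetilde{e}$ is the correct additive character for the Pontryagin duality between $\mathcal{O}_K/n\mathcal{O}_K$ and the corresponding dual group (so that the inner phases collapse to the Gauss sum of \cite[Lemma 2.2]{G&Zhao4}), and (ii) checking the rotational-invariance step so that the two-variable $\hat f$ becomes the radial $\widetilde{W}$. Neither is a serious obstacle: both reduce to unfolding definitions. Once these bookkeeping checks are in place, the lemma follows.
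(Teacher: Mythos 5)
Your proposal is correct and is essentially the standard argument: the paper itself gives no proof of this lemma (it is quoted from \cite[Lemma 2.7]{G\&Zhao4}), and the proof there proceeds exactly as you outline, by splitting into residue classes modulo $n$, applying two-dimensional Poisson summation over the shifted sublattice $a+n\mathcal{O}_K$ of covolume $N(n)$, and using the rotational invariance of $W(N(\cdot))$ together with $\widetilde{e}(-t(x+yi))=e^{-2\pi i ty}$ to collapse the planar Fourier transform to the radial transform $\widetilde{W}$ and the residue sum to the Gauss sum $g(k,n)$. The two checks you flag (that $\widetilde{e}$ gives the self-dual pairing on $\mathcal{O}_K$, and the rotation step) do indeed reduce to unwinding definitions, so there is no gap.
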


   We include here our conventions for various transforms used in this paper. For any function $W$, we write $\widehat{W}$ for the Fourier transform of $W$ and recall that its Mellin transform $ \mathcal M w$ is defined in \eqref{MW}. We note that, for any Schwartz class function $W$ and any integer $E \geq 0$, integration by parts $E+1$ times yields that for $\Re(s)>0$,
\begin{equation}
\label{eq:h}
 \mathcal M  {W}(s) \ll \frac{1}{|s| (1+|s|)^{E}}.
\end{equation}

    Furthermore, we set
\begin{align*}
   \widebreve{W}(t) =\int\limits^{\infty}_{-\infty}\int\limits^{\infty}_{-\infty}\widetilde{W}(N(u+vi))\widetilde{e}\left(- t(u+vi)\right)\dif u \dif v, \quad t \geq 0.
\end{align*}

   When $W(t)$ is a real smooth function, one follows the arguments that lead to the bounds given in \cite[(2.12)]{G&Zhao4} to get that both $\widetilde{W}$
   and $\widebreve{W}$ are real and
\begin{align}
\label{bounds}
     \widetilde{W}^{(\mu)}(t), \widebreve{W}^{(\mu)}(t)  \ll_{j} \min \{ 1, |t|^{-j} \}
\end{align}
    for all integers $\mu \geq 0$, $j \geq 1$ and all real $t$.

\subsection{Some consequences of GRH}
   In this section we state a few results that are derived by using GRH. The first one is for sums over primes.
\begin{lemma}
\label{lem2.7}
Suppose that GRH is true. For any Hecke character $\chi \pmod {m}$ of trivial infinite type, we have for $X \geq 1$,
\begin{align} \label{lem2.7eq}
\sum_{\substack {N(\varpi) \leq X \\ \varpi \equiv 1 \bmod {(1+i)^3}}} \chi (\varpi) \log N(\varpi)
=\delta_{\chi} X+ O\big( X^{1/2} \log^{2} (2X) \log N(m) \big ),
\end{align}
    where $\delta_{\chi}=1$ if $\chi$ is principal and $\delta_{\chi}=0$ otherwise.  Moreover, we have
\begin{align}
\label{mer}
  \sum_{\substack{ N(\varpi) \leq X \\ \varpi \equiv 1 \bmod {(1+i)^3}}} \frac {\log N(\varpi)}{N(\varpi) }= \log X+O(1).
\end{align}
\end{lemma}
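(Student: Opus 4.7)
For \eqref{lem2.7eq} my plan is to prove the estimate first for the full prime-power sum
$$\psi(X,\chi):=\sum_{\substack{n \text{ primary}\\ N(n)\leq X}}\Lambda(n)\chi(n)$$
and then to peel off the negligible contribution of higher prime powers. The bijection between primary elements of $\mathcal{O}_K$ and ideals coprime to $(1+i)$, together with the fact that $\chi$ is well-defined on primary representatives, means that $\psi(X,\chi)$ is just the Chebyshev-type function attached to $L(s,\chi)$. Starting from the Dirichlet series identity $-L'/L(s,\chi)=\sum_n\Lambda(n)\chi(n)N(n)^{-s}$, I would apply a truncated Perron formula, move the contour past the critical line, and collect the residues at $s=1$ (contributing $\delta_\chi X$) and at each non-trivial zero $\rho=\tfrac12+i\gamma$, which under GRH all lie on the critical line. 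The resulting explicit formula reads
$$\psi(X,\chi)=\delta_\chi X-\sum_{|\gamma|\leq T}\frac{X^{\rho}}{\rho}+O\!\left(\frac{X\log^{2}(XN(m))}{T}\right).$$

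Using the Riemann--von Mangoldt type estimate $N(T,\chi)\ll T\log(TN(m))$ for the number of non-trivial zeros with $|\gamma|\leq T$ (a standard consequence of the functional equation quoted in Section \ref{NTbackground} and the Hadamard product for $L(s,\chi)$), partial summation bounds the sum over zeros by $X^{1/2}\log(T)\log(TN(m))$. Choosing $T=X^{1/2}$ balances both error terms and yields
$$\psi(X,\chi)=\delta_\chi X+O\!\left(X^{1/2}\log^{2}(2X)\log N(m)\right).$$
Isolating the $k=1$ contribution and bounding the higher prime powers trivially by $\sum_{k\geq 2}\sum_{N(\varpi)\leq X^{1/k}}\log N(\varpi)\ll X^{1/2}$ absorbs into the stated error and gives \eqref{lem2.7eq}. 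For \eqref{mer} I would apply \eqref{lem2.7eq} with $\chi$ the trivial character (for which $\log N(m)=O(1)$) to obtain $\theta(X):=\sum_{\varpi\equiv 1\bmod(1+i)^{3},\,N(\varpi)\leq X}\log N(\varpi)=X+O(X^{1/2}\log^{2}(2X))$, and then invoke Abel summation:
$$\sum_{\substack{\varpi\equiv 1\bmod(1+i)^{3}\\ N(\varpi)\leq X}}\frac{\log N(\varpi)}{N(\varpi)}=\frac{\theta(X)}{X}+\int_{2}^{X}\frac{\theta(t)}{t^{2}}\,\dif t=\log X+O(1).$$

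The main obstacle is producing the explicit formula together with the zero-counting estimate for a general Hecke $L$-function over the Gaussian field, with the correct dependence on the conductor norm $N(m)$; once these ingredients are cited from the standard theory (e.g.\ Iwaniec--Kowalski), the rest of the argument is a routine transcription of the classical Dirichlet-character proof to $\mathcal{O}_K$, and the only care needed is to track the fact that we are summing over primary representatives rather than over ideals.
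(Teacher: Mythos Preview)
Your proposal is correct and follows essentially the same approach as the paper: the paper simply cites \cite[Theorem 5.15]{iwakow} for \eqref{lem2.7eq} (which is precisely the explicit-formula argument you sketch) and then derives \eqref{mer} by taking $\chi$ principal and applying partial summation, exactly as you do. Your write-up supplies the details behind the cited theorem rather than taking a different route.
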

\begin{proof}
   The formula in \eqref{lem2.7eq} follows directly from \cite[Theorem 5.15]{iwakow} and \eqref{mer} is derived from \eqref{lem2.7eq} by taking $\chi$ to be the principal character modulo $1$ and using partial summation.
\end{proof}

    Our next two lemmas provide estimations on certain weighted quadratic character sums. The following one is a generalization of \cite[Lemma 2.10]{FPS16}.
\begin{lemma}
\label{lemma count of squarefree}
  Suppose that GRH is true.  For any even, non-zero and non-negative Schwartz function $w$, we have for any primary  $n \in \mathcal{O}_K$ and $\epsilon>0$,
\begin{equation*}
\underset{(c, 1+i) =1 }{\sum \nolimits^{*}}  w\left( \frac {N(c)}X \right) \left(\frac {i(1+i)^5c}{n} \right)= \delta_{\chi_n }\frac{\pi X}{3\zeta_K(2)} \widehat w(0)   \prod_{\varpi\mid n}  \left(1+\frac 1{N(\varpi)} \right)^{-1}+O\big(N(n)^{3(1-\delta_{\chi_n })/8 +\varepsilon}X^{1/4+\varepsilon}\big).
\end{equation*}
  Here we recall that $\sum^*$ denotes the sum over square-free elements in $\mathcal{O}_K$.
\end{lemma}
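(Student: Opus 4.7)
The plan is to adapt the Soundararajan-style argument used in \cite[Lemma~2.10]{FPS16} to the Gaussian-integer setting. First I would detect the squarefree condition via the M\"obius identity $\mu_{[i]}^2(c) = \sum_{d^2 \mid c} \mu_{[i]}(d)$, writing $c = d^2 e$ with both $d$ and $e$ primary and coprime to $(1+i)$. Using multiplicativity of the quadratic symbol together with the fact that $\chi_n(d)^2 = \mathbbm{1}_{(d,n)=1}$, this yields
\begin{align*}
S \;=\; \leg{i(1+i)^5}{n} \sum_{\substack{d\text{ primary}\\(d,\,n(1+i))=1}} \mu_{[i]}(d) \sum_{\substack{e\text{ primary}\\(e,\,1+i)=1}} \leg{e}{n}\, w\!\left(\frac{N(d)^2 N(e)}{X}\right).
\end{align*}
I would then split the outer $d$-sum at a parameter $Y$ to be optimized at the end.

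For $N(d) \le Y$, apply the Poisson summation formula of Lemma~\ref{Poissonsum} to the inner $e$-sum (after a secondary M\"obius inversion to remove the condition $(e,1+i)=1$). This converts $T_d := \sum_e \leg{e}{n} w(N(d)^2 N(e)/X)$ into a frequency expansion of the form
\begin{align*}
T_d \;=\; \frac{X}{N(d)^2 N(n)} \sum_{k\in\mathcal{O}_K} g(k,n)\, \widetilde{w}\!\left(\sqrt{\frac{N(k) X}{N(d)^2 N(n)}}\right) \;+\;(\text{secondary error}).
\end{align*}
The frequency $k=0$ contributes only if $\chi_n$ is principal, in which case $g(0,n)$ equals the generalized Euler function $\phi_K(n)$. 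Combining this with
\begin{align*}
\sum_{\substack{d\text{ primary}\\(d,n(1+i))=1}} \frac{\mu_{[i]}(d)}{N(d)^2} \;=\; \frac{1}{\zeta_K(2)}\prod_{\varpi\mid n(1+i)}\left(1-\frac{1}{N(\varpi)^2}\right)^{-1},
\end{align*}
the relation between $\widetilde{w}(0)$ and $\widehat{w}(0)$ obtained by passing to polar coordinates in \eqref{wtilde}, the local factor $4/3$ at $\varpi=1+i$, and the algebraic identity $\phi_K(n) N(n)^{-1}\prod_{\varpi\mid n}(1-N(\varpi)^{-2})^{-1}=\prod_{\varpi\mid n}(1+N(\varpi)^{-1})^{-1}$, this yields the asserted main term $\frac{\pi X}{3\zeta_K(2)}\widehat{w}(0)\prod_{\varpi\mid n}(1+N(\varpi)^{-1})^{-1}$.

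The frequencies $k\neq 0$ are controlled using the rapid decay \eqref{bounds} of $\widetilde{w}$, which effectively truncates to $N(k) \ll N(d)^2 N(n)^{1+\varepsilon}/X$, together with a careful bound on the Gauss sum. Writing $k = k_1 k_2^2$ with $k_1$ squarefree, one has the sharp estimate $|g(k,n)| \le N(n)^{1/2} N((k_1,n))^{1/2}$ when $k_1/(k_1,n)$ is \emph{not} a square in $\mathcal{O}_K$, while the (sparser) square case---where $|g(k,n)|$ can grow as large as $N(n)$---must be handled separately by exploiting the scarcity of such $k$. Summing over $d \le Y$ in both regimes contributes a Poisson error of size $\ll X^{1/4+\varepsilon} N(n)^{3/8+\varepsilon}$ in the non-principal case and $\ll X^{1/4+\varepsilon}$ in the principal case. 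For the complementary tail $N(d)>Y$, Lemma~\ref{lem2.7} together with partial summation (or, equivalently, a Mellin-Perron shift past $\Re(s)=1/2$) gives $T_d \ll N(n)^{\varepsilon} (X/N(d)^2)^{1/2}\log^2(XN(n))$, and the M\"obius cancellation over $d$ combined with the choice $Y \asymp X^{1/2} N(n)^{-3/4}$ balances the two estimates to produce the announced error term.

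The technically most delicate step, and the main obstacle, is the uniform treatment of the $k\neq 0$ Gauss-sum contribution---specifically, the square/non-square dichotomy of $k$ that is responsible for the exponent $3/8$ on $N(n)$. Carrying Soundararajan's square-sieve argument into $\mathcal{O}_K$ requires systematically replacing rational reciprocity by the Gaussian quadratic reciprocity \eqref{quadrecip} and tracking the supplementary laws for $\leg{i}{\cdot}$ and $\leg{1+i}{\cdot}$; this lengthens the bookkeeping but introduces no new conceptual difficulty beyond the rational case.
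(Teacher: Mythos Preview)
Your Poisson-summation strategy is a legitimate route and, carried out carefully, would yield the lemma; however, the paper takes a much shorter path. After collapsing the unit group to reduce to primary $c$, the paper applies Mellin inversion directly and recognises the resulting Dirichlet series as an Euler product:
\[
\sumstar_{c \equiv 1 \bmod (1+i)^3}\chi_n(c)\,w\!\left(\frac{N(c)}X\right)
=\frac1{2\pi i}\int_{(2)}\frac{L(s,\chi_n)}{L(2s,\chi_n^2)}\left(1+\frac{\chi_n(1+i)}{2^{s}}\right)^{-1}\mathcal M w(s)\,X^s\,\dif s.
\]
One then shifts to $\Re(s)=\tfrac14+\varepsilon$. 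When $\chi_n$ is principal the pole of $\zeta_K$ at $s=1$ produces the main term; on the new line the integral is bounded using the GRH estimate $L(2s,\chi_n^2)^{-1}\ll (N(n)|s|)^{\varepsilon}$ together with the convexity bound $L(s,\chi_n)\ll (N(n)(|s|+1)^2)^{(1-\Re(s))/2+\varepsilon}$. The exponent $3/8$ on $N(n)$ is simply $(1-\tfrac14)/2$, a direct consequence of convexity at $\Re(s)=1/4$, with no Gauss-sum dichotomy required.

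Compared with this, your approach trades a one-line $L$-function identity for a substantial amount of combinatorics: the square/non-square splitting of the frequency $k$, the supplementary-law bookkeeping for $i$ and $1+i$, and the $Y$-optimisation are all avoided by the paper. Your method has the advantage of being more self-contained (it does not invoke the analytic continuation or convexity of $L(s,\chi_n)$ as a black box), but here that gain is illusory: your tail estimate for $N(d)>Y$ appeals to Lemma~\ref{lem2.7}, which concerns sums over \emph{primes}, not over all $e$; to bound $T_d$ for large $d$ you would in fact need exactly the Mellin shift past $\Re(s)=1/2$ that you mention parenthetically---i.e.\ the same $L$-function input the paper uses throughout. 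So the Poisson machinery is extra work that does not remove the dependence on the analytic properties of $L(s,\chi_n)$.
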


\begin{proof}
    Since each $c$ co-prime to $1+i$ can be uniquely written as the product of a unit and a primary element, it follows that
\begin{align*}
\underset{(c, 1+i) =1 }{\sum \nolimits^{*}}  w\left( \frac {N(c)}X \right) \left(\frac {(1+i)^5c}{n} \right)
=& \left(1+ \leg {-1}{n}+\leg {i}{n}+\leg {i^3}{n} \right) \sumstar_{c \equiv 1 \mod (1+i)^3 } w\left( \frac {N(c)}X \right) \left(\frac {(1+i)c}{n}\right) \\
=&  2\left(1+ \leg {i}{n} \right)\left(\frac {1+i}{n} \right) \sumstar_{c \equiv 1 \mod (1+i)^3 } w\left( \frac {N(c)}X \right) \left(\frac {c}{n} \right).
\end{align*}

   Note further that the quadratic reciprocity law \eqref{quadrecip} allows us to write $\chi_{n}(c)$ for $\leg {c}{n}$ and $\chi_n$ is a Hecke character.  We then apply the Mellin inversion formula and get
\begin{align*}
  & \sumstar_{c \equiv 1 \mod (1+i)^3 }  w\left( \frac {N(c)}X \right)\chi_{n}(c)  = \frac 1{2\pi i } \int_{(2)} \sum_{\substack{c \equiv 1 \mod (1+i)^3}} \frac{\mu_{[i]}^2(c) \chi_{n}(c) }{N(c)^s} X^s  \mathcal M w(s) \dif s \\
 &= \frac 1{2\pi i } \int\limits_{(2)} \prod_{\substack{\varpi \equiv 1 \mod (1+i)^3 \\ (\varpi, 1+i)=1}} \bigg(1+ \frac{\chi_{n}(\varpi)}{N(\varpi)^s} \bigg)  \mathcal M w (s)X^s \dif s = \frac 1{2\pi i } \int\limits_{(2)}  \frac{L(s, \chi_n)}{L(2s, \chi^2_n)}\bigg(1+ \frac{\chi_{n}(1+i)}{N(1+i)^s} \bigg)^{-1} \mathcal M w (s)X^s \dif s.
\end{align*}
Here and after, we write $\int_{(c)}$ for the integral over the vertical line with $\Re(s) = c$. \newline

  We shift the line of integration to $\Re(s)=1/4+\varepsilon$ and we encounter a pole at $s=1$ only when $\chi_n$ is a principal character. In that case, the residue is easily seen (recall that the residue of $\zeta_K(s)$ at $s = 1$ is $\pi/4$) to be
\begin{align*}
   \frac {\pi}{6}\zeta^{-1}_K(2) \prod_{\varpi | n} \left( 1+\frac 1{N(\varpi)} \right)^{-1}  \mathcal M w  (1) X= \frac {\pi}{12}\zeta^{-1}_K(2) \prod_{\varpi | n} \left( 1+\frac 1{N(\varpi)} \right)^{-1}  \widehat w  (0) X.
\end{align*}
   by noting that $\mathcal M w  (1)=\widehat w  (0)/2 $ when $w$ is even. The remaining integral over the line $\Re(s)=1/4+\varepsilon$  can be estimated by using \eqref{eq:h} for a suitable $E$ and the bound that assuming GRH,
\begin{align}
\label{zetainversebound}
   L^{-1}(2s, \chi_n) \ll (N(n)(1+\Im(s))^{\varepsilon},
\end{align}
  which follows from  \cite[Theorem 5.19]{iwakow}. This gives the result when $\chi_n$ is a principal character. \newline

  When $\chi_n$ is not principal, we apply the convexity bound \cite[(5.20)]{iwakow} for $L$-functions attached to non-principal characters, such that
$$ L(s,\chi_n) \ll_{\varepsilon} (N(n)(|s|+1)^2)^{(1-\Re(s))/2+\varepsilon/2} \hspace{1cm} (0\leq \Re(s) \leq 1),  $$
   Combining this with \eqref{zetainversebound} allows us to readily deduce the assertion of the lemma for $\chi_n$ being non-principal. This completes the proof.
\end{proof}

   By taking $n=1$ in Lemma \ref{lemma count of squarefree}, we immediately obtain that
\begin{equation}
\label{W}
W(X)= \frac{\pi X}{3\zeta_K(2)} \widehat w(0)   +O \left( X^{1/4+\varepsilon} \right).
\end{equation}

\begin{lemma}
\label{lemma logd}
Suppose that GRH is true. For any even, non-zero and non-negative Schwartz function $w$, we have
\begin{align*}
\frac {1}{ W(X)}\underset{(c, 1+i) =1 }{\sum \nolimits^{*}}  w\left( \frac {N(c)}X \right)  \log N(c)= \log X + \frac 2{\widehat w(0)}\int\limits_0^{\infty} w(x) \log x \ \dif x  +O \left( X^{-1/2 +\varepsilon} \right).
\end{align*}
\end{lemma}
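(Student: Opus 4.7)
My plan is to attack this via Mellin inversion. Writing $w(N(c)/X) = \frac{1}{2\pi i}\int_{(2)} \mathcal M w(s) (X/N(c))^s\, \dif s$ and interchanging sum and integral gives
\[
\underset{(c, 1+i) =1 }{\sum \nolimits^{*}}  w\!\left( \frac {N(c)}X \right)  \log N(c)
= -\frac 1{2\pi i}\int\limits_{(2)} \mathcal M w(s)\, X^s\, G'(s)\,\dif s,
\]
where, after noting that each ideal prime to $1+i$ has exactly $|U_K|=4$ generators and using a standard Euler-product computation,
\[
G(s) := \underset{(c, 1+i) =1 }{\sum \nolimits^{*}} \frac{1}{N(c)^s} = \frac{4\,\zeta_K(s)}{\zeta_K(2s)\bigl(1+2^{-s}\bigr)}.
\]
Thus $G(s)$ has a simple pole at $s=1$ with residue $R = \frac{2\pi}{3\zeta_K(2)}$, and $G'(s)$ has a double pole there.

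Next I would extract the residue at $s=1$. Expanding $\mathcal M w(s)X^s = X\bigl(\mathcal M w(1) + (\mathcal M w'(1) + \mathcal M w(1)\log X)(s-1) + \cdots\bigr)$ and $G'(s) = -R/(s-1)^2 + \cdots$, the coefficient of $1/(s-1)$ in the product is $-RX\bigl(\mathcal M w'(1) + \mathcal M w(1)\log X\bigr)$. Since $w$ is even, $\mathcal M w(1) = \tfrac{1}{2}\widehat w(0)$ and $\mathcal M w'(1) = \int_0^\infty w(t)\log t\,\dif t$, so the residue contribution equals
\[
\frac{2\pi X}{3\zeta_K(2)}\left(\int\limits_0^{\infty} w(t)\log t\,\dif t + \tfrac{1}{2}\widehat w(0)\log X\right).
\]

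Finally, I would shift the line of integration to $\Re(s) = \tfrac12 + \varepsilon$. On this vertical line, GRH for $\zeta_K$ gives $\zeta_K(s) \ll (1+|s|)^{\varepsilon}$ and (as in \eqref{zetainversebound}) $\zeta_K(2s)^{-1} \ll (1+|s|)^{\varepsilon}$, while $|1+2^{-s}|$ is bounded away from $0$; combined with the rapid decay of $\mathcal M w(s)$ from \eqref{eq:h} and the fact that $G'(s)$ is at worst the derivative of a function of polynomial growth in $|s|$, the shifted integral is $O(X^{1/2+\varepsilon})$. Dividing by $W(X) = \frac{\pi X\widehat w(0)}{3\zeta_K(2)}(1 + O(X^{-3/4+\varepsilon}))$ from \eqref{W} and simplifying yields the claimed formula. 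The only step requiring care is obtaining the polynomial-in-$|s|$ bound for $G'(s)$ on the shifted line under GRH; this is the main obstacle, but it follows routinely by differentiating the GRH bounds on $\zeta_K(s)$ and $\zeta_K(2s)^{-1}$ via Cauchy's integral formula on narrow horizontal strips.
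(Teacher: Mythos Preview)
Your proposal is correct and follows essentially the same route as the paper: Mellin inversion, identification of $G(s)=4\zeta_K(s)/(\zeta_K(2s)(1+2^{-s}))$, extraction of the residue of $-\mathcal M w(s)X^sG'(s)$ at the double pole $s=1$, and a contour shift with GRH-based bounds on the shifted line. The only difference is cosmetic: you stop at $\Re(s)=\tfrac12+\varepsilon$ (where $\zeta_K(2s)^{-1}$ is even unconditionally bounded), whereas the paper pushes to $\Re(s)=\tfrac14+\varepsilon$ and must note that any contribution from $s=\tfrac12$ is $O(X^{1/2+\varepsilon})$; both choices yield the same final error $O(X^{-1/2+\varepsilon})$ after division by $W(X)$.
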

\begin{proof}
  We have
\begin{align*}
\underset{(c, 1+i) =1 }{\sum \nolimits^{*}}  w\left( \frac {N(c)}X  \right)  \log N(c)
=& -\frac 4{2\pi i}\int\limits_{(2)} \frac{\dif}{\dif s} \left( \sum_{ \substack{c \equiv 1 \mod (1+i)^3}} \frac{\mu^2_{[i]}(c)}{N(c)^s}  \right) \mathcal M w(s)X^s \ \dif s \\
=& -\frac 4{2\pi i}\int\limits_{(2)} \frac{\dif}{\dif s} \left( \frac{\zeta_K(s)}{\zeta_K(2s) ( 1+2^{-s} )} \right) \mathcal M w(s)X^s \ \dif s.
\end{align*}
We shall shift the contour of integration to the line $\Re(s)=1/4+\varepsilon$. Note that under GRH, the only poles of the function
\begin{align*}
 \frac{\dif}{\dif s} \left( \frac{\zeta_K(s)}{\zeta_K(2s) ( 1+2^{-s} )} \right)  = \frac{\zeta'_K(s)}{\zeta_K(2s) ( 1+2^{-s} )}- \frac{2\zeta'_K(2s)\zeta_K(s)}{\zeta^{2}_K(2s) ( 1+2^{-s} )}+\frac{\log 2 \cdot \zeta_K(s)}{2^{s} \zeta_K(2s) ( 1+2^{-s} )^{2} }
\end{align*}
in the region $1/4+\varepsilon \leq \Re(s)\leq 2$ are at $s=1$ and $s=1/2$. Only the contribution of the residue at $s=1$ is $\gg X$. It is then easy to compute the contribution of the residues to be
\begin{align*}
   \frac{2\pi }{3\zeta_K(2)} \mathcal M w(1)X \log X+\frac{2\pi  }{3\zeta_K(2)}  (\mathcal M w)'(1)X+O(X^{1/2+\varepsilon}).
\end{align*}
   The assertion of the lemma follows from this and \eqref{W}, by noting that $\mathcal M w(1)=\widehat {w}(0)/2$.
\end{proof}

\subsection{The Explicit Formula}

Let $f$ be an even, positive Schwartz function whose Fourier transform $\hat{f}$ is a smooth function with compact support.
 Let $\chi$ be a primitive Hecke character $\chi \pmod {m}$ of trivial infinite type.
In this section we derive an explicit formula which allows us to convert the evaluation of $f$ at the non-trivial zeros of $L(s,\chi)$ for to a sum over powers of prime ideals. Note that the non-trivial zeros of $L(s,\chi)$ are precisely those of the corresponding $\Lambda(s, \chi)$.   For some $c >1$, consider the following integral
\begin{equation*}
     \frac 1{2 \pi i}\int\limits_{(c)}\frac {\Lambda'}{\Lambda}(s, \chi)f \left( \frac {s-1/2}{2 \pi i} \right) \dif s.
\end{equation*}
By moving the line of integration to $1-c$, we obtain
\begin{equation*}
     \frac 1{2 \pi i}\int\limits_{(c)}\frac {\Lambda'}{\Lambda}(s, \chi)f \left( \frac {s-1/2}{2 \pi i} \right) \dif s = \sum_{j}f \left( \frac {\gamma_{\chi,j}}{2 \pi} \right)+
     \frac 1{2 \pi i}\int\limits_{(1-c)}\frac {\Lambda'}{\Lambda}(s, \chi)f \left( \frac {s-1/2}{2 \pi i} \right) \dif s.
\end{equation*}
    Now the functional equation $\Lambda(s, \chi)=W(\chi)(N(m))^{-1/2}\Lambda(1-s, \overline{\chi})$ implies
\begin{align*}
   \frac {\Lambda'}{\Lambda}(s, \chi)=-\frac {\Lambda'}{\Lambda}(1-s, \overline{\chi}).
\end{align*}

    It follows that
\begin{align*}
   \sum_{j}f \left( \frac {\gamma_{\chi,j}}{2 \pi} \right)=\frac 1{2 \pi i}\int\limits_{(c)}\frac {\Lambda'}{\Lambda}(s, \chi)
   f \left( \frac {s-1/2}{2 \pi i} \right) \dif s+\frac 1{2 \pi i}\int\limits_{(c)}\frac {\Lambda'}{\Lambda}(s, \overline{\chi})
   f\left( \frac {1/2-s}{2 \pi i} \right) \dif s.
\end{align*}
    Using
\begin{equation*}
    \frac {\Lambda'}{\Lambda}(s, \chi)=\frac 1{2}\log \frac {|D_K|N(m)}{(2\pi)^2}+\frac
    {\Gamma'}{\Gamma}(s)+\frac {L'}{L}(s, \chi),
\end{equation*}
     we obtain that
\begin{align*}
    \frac 1{2 \pi i}\int_{(c)}\frac {\Lambda'}{\Lambda}(s, \chi)
   f \left( \frac {s-1/2}{2 \pi i} \right) \dif s+\frac 1{2 \pi i}\int_{(c)}\frac {\Lambda'}{\Lambda}(s, \overline{\chi})
   f\left( \frac {1/2-s}{2 \pi i} \right) \dif s =T_1+T_2,
\end{align*}
   where
\[   T_1 = \frac 1{2 \pi i}\int\limits_{(c)}\left( \frac 1{2}\log \frac {|D_K|N(m)}{(2\pi)^2}+\frac
    {\Gamma'}{\Gamma}(s)\right) f \left( \frac {s-1/2}{2 \pi i} \right) \dif s +\frac 1{2 \pi i}\int\limits_{(c)}\left( \frac 1{2}\log \frac {|D_K|N(m)}{(2\pi)^2}+\frac
    {\Gamma'}{\Gamma}(s)\right) f \left( \frac {1/2-s}{2 \pi i} \right) \dif s, \]
    and
    \[    T_2 =  \frac 1{2 \pi i}\int\limits_{(c)}\frac {L'}{L}(s, \chi)f \left( \frac {s-1/2}{2 \pi i} \right) \dif s+\frac 1{2 \pi i}\int\limits_{(c)}\frac {L'}{L}(s, \overline{\chi})f \left( \frac {1/2-s}{2 \pi i} \right) \dif s. \]
    For $T_1$, we move the line of integration to $1/2$ and apply the change of variables $s=1/2+2\pi i t$.  In so doing, we obtain
\begin{align}
\label{I1}
\begin{split}
    T_1 &=\int\limits^{\infty}_{-\infty}\left( \log \frac {|D_K|N(m)}{(2\pi)^2}+ \frac
    {\Gamma'}{\Gamma} \left( \frac{1}{2}+2\pi i t \right)+ \frac
    {\Gamma'}{\Gamma} \left( \frac{1}{2}-2\pi i t \right) \right) f(t) \dif t  \\
    &=\int\limits^{\infty}_{-\infty}\left( \log\frac {|D_K|N(m)}{(2\pi)^2}+ 2\frac
    {\Gamma'}{\Gamma} \left( \frac 12 \right) \right )f(t) \dif t+\int\limits^{\infty}_{0}\frac {e^{-t/2}}{1-e^{-t}} \left( 2\hat{f}(0)-\hat{f}(t)-\hat{f}(-t) \right)  \dif t,
\end{split}
\end{align}
where the second equality above follows from \cite[Lemma 12.14]{MVa1}. \newline

We express $T_2$ as
\begin{align*}
    T_2=-\sum_{\substack{(n) \\ n \in \mathcal{O}_K}} \chi(n)\Lambda(n)\frac 1{2 \pi i}\int\limits_{(c)}\frac 1{N(n)^s}f \left( \frac {s-1/2}{2 \pi i} \right) \dif s-\sum_{\substack{(n) \\ n \in \mathcal{O}_K}} \overline{\chi}(n) \Lambda(n)\frac 1{2 \pi i}\int\limits_{(c)}\frac 1{N(n)^s}f \left( \frac {1/2-s}{2 \pi i} \right) \dif s.
\end{align*}

    Moving the lines of integration for $I_2$ to $\Re(s)=1/2$ and
    setting $s=1/2+2\pi i t$, the integrations become
\begin{equation*}
    \int\limits^{\infty}_{-\infty}\frac 1{N(n)^{1/2 \pm 2 \pi i t}}f(t)dt=\frac 1{\sqrt{N(n)}}\hat{f}(\mp \log N(n)).
\end{equation*}

Thus
\begin{align}
\label{I2}
    T_2=-\sum_{\substack{(n) \\ n \in \mathcal{O}_K}} \frac {\chi(n)\Lambda(n)}{\sqrt{N(n)}}\hat{f}(-\log N(n))-\sum_{\substack{(n) \\ n \in \mathcal{O}_K}} \frac {\overline{\chi}(n)\Lambda(n)}{\sqrt{N(n)}}\hat{f}( \log N(n)).
\end{align}

    We then derive from \eqref{I1} and \eqref{I2} that
\begin{align*}
    \sum_{j}f \left( \frac {\gamma_{\chi,j}}{2 \pi} \right) =& \int\limits^{\infty}_{-\infty}\left( \log\frac {|D_K|N(m)}{(2\pi)^2}+ 2\frac
    {\Gamma'}{\Gamma}\left( \frac 12 \right) \right )f(t) \dif t+\int\limits^{\infty}_{0}\frac {e^{-t/2}}{1-e^{-t}}(2\hat{f}(0)-\hat{f}(t)-\hat{f}(- t)) \dif t \\
    & \hspace*{1cm} -\sum_{\substack{(n) \\ n \in \mathcal{O}_K}} \frac {\chi(n)\Lambda(n)}{\sqrt{N(n)}}\hat{f}(-\log N(n))-\sum_{\substack{(n) \\ n \in \mathcal{O}_K}} \frac {\overline{\chi}(n)\Lambda(n)}{\sqrt{N(n)}}\hat{f}( \log N(n)).
\end{align*}

    Now an easy computation in Fourier transform gives
\begin{align*}
   \sum_{j}f \left( \frac {\log X \gamma_{\chi,j}}{2 \pi} \right) =& \frac 1{\log X} \int\limits^{\infty}_{-\infty}\left( \log\frac {|D_K|N(m)}{(2\pi)^2}+ 2\frac
    {\Gamma'}{\Gamma} \left( \frac 12 \right) \right )f(t) \dif t \\
    &\hspace*{1cm} +\frac 1{\log X}\int\limits^{\infty}_{0}\frac {e^{-t/2}}{1-e^{-t}}(2\hat{f}(0)-\hat{f} \left( \frac {t}{\log X} \right)-\hat{f}\left( -\frac {t}{\log X}) \right) \dif t
     \\
   & \hspace*{1cm}  -\frac 1{\log X}\sum_{\substack{(n) \\ n \in \mathcal{O}_K}} \frac {\chi(n)\Lambda(n)}{\sqrt{N(n)}}\hat{f} \left( -\frac {\log N(n)}{\log X} \right)-\frac 1{\log X}\sum_{\substack{(n) \\ n \in \mathcal{O}_K}} \frac {\overline{\chi}(n)\Lambda(n)}{\sqrt{N(n)}}\hat{f} \left( \frac {\log N(n)}{\log X} \right).
\end{align*}
Recall that $\mathcal{L}=\log X$ and note that as $f$ is even, so is $\hat{f}$. Thus, when $\chi$ is a quadratic Hecke character, we can simplify
   the above expression as
\begin{align}
\label{fef}
\begin{split}
   \sum_{j}f\left( \frac {\mathcal{L} \gamma_{\chi,j}}{2 \pi} \right) =& \frac 1{\mathcal{L}}\left( \log\frac {|D_K|N(m)}{(2\pi)^2}+ 2\frac
    {\Gamma'}{\Gamma}\left( \frac 12 \right) \right )\hat{f}(0)+\frac 2{\mathcal{L}}\int^{\infty}_{0}\frac {e^{-t/2}}{1-e^{-t}}\left( \hat{f}(0)-\hat{f}\left(\frac {t}{\mathcal{L}}\right) \right) \dif t
     \\
   & \hspace*{1cm}  -\frac 2{\mathcal{L}}\sum_{\substack{(n) \\ n \in \mathcal{O}_K}} \frac {\chi(n)\Lambda(n)}{\sqrt{N(n)}}\hat{f} \left( \frac {\log N(n)}{\mathcal{L}} \right).
\end{split}
\end{align}

   Recall that every odd prime $\varpi \in \mathcal{O}_K$ has a primary generator. In our paper, we work on explicitly with the Hecke characters $\chi_{i(1+i)^5c}$ for odd, square-free $c$. Our choice for such characters is inspired by the Dirichlet characters $\chi_{8d}$ for odd, square-free rational integers $d$ considered by K. Soundararajan \cite{sound1} in his work on non-vanishing of quadratic Dirichlet $L$-functions at the central value. The advantage of using the characters $\chi_{i(1+i)^5c}$ is that, besides their primitivity, the presence of the factors $i(1+i)^3$ makes the resulting expression much neater after applying the Poisson summation formula given in Lemma \ref{Poissonsum}. We now apply the formula given in \eqref{fef} to the special case $\chi_{i(1+i)^5c}$ for odd, square-free $c$ to arrive at the following
\begin{lemma}[Explicit Formula]
\label{lem2.4}
   Let $\phi(x)$ be an even Schwartz function whose Fourier transform
   $\hat{\phi}(u)$ has compact support. Let $c \in \mathcal{O}_K$ be square-free satisfying $(c, 1+i)=1$. We have
\begin{align*}
\begin{split}
S(\chi_{i(1+i)^5c}, \phi) =& \frac 1{\mathcal{L}}\left( \log\frac {32N(c)}{\pi^2}+ 2\frac
    {\Gamma'}{\Gamma} \left( \frac 12 \right) \right )\hat{\phi}(0)+\frac 2{\mathcal{\mathcal{L}}}\int\limits^{\infty}_{0}\frac {e^{-t/2}}{1-e^{-t}}\left( \hat{\phi}(0)-\hat{\phi}\left( \frac {t}{\mathcal{\mathcal{L}}} \right) \right) \dif t
      -\frac 2{\mathcal{L}}\sum_{j \geq 1} S_j(\chi_{i(1+i)^5c}, \mathcal{L} ;\hat{\phi}),
\end{split}
\end{align*}
   where
\begin{equation*}
   S_j(\chi_{i(1+i)^5c}, \mathcal{L};\hat{\phi})=\sum_{\varpi \equiv 1 \bmod {(1+i)^3}}\frac { \log
   N(\varpi)}{\sqrt{N(\varpi^j)}}\chi_{i(1+i)^5c} \left( \varpi^j \right)  \hat{\phi}\left( \frac {\log N(\varpi^j)}{\mathcal{L}} \right)
\end{equation*}
with the sum over $\varpi$ running over primes in $\mathcal{O}_K$.
\end{lemma}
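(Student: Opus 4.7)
The plan is to specialize the general explicit formula \eqref{fef}, already derived in the preceding paragraphs of this section, to the particular Hecke character $\chi = \chi_{i(1+i)^5c}$. By the discussion in Section~\ref{NTbackground}, whenever $c$ is odd and square-free, $\chi_{i(1+i)^5c}$ is a primitive quadratic Hecke character of trivial infinite type modulo $m := (1+i)^5 c$, so the hypotheses under which \eqref{fef} was derived are met with $f = \phi$. Since $S(\chi_{i(1+i)^5c},\phi) = \sum_j \phi(\mathcal{L}\gamma_{\chi,j}/(2\pi))$ by the definition of $\tilde\gamma_{\chi,j}$, the left-hand side of \eqref{fef} is exactly $S(\chi_{i(1+i)^5c}, \phi)$, and it remains only to identify each of the terms on the right-hand side with those appearing in the statement of the lemma.

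First I would handle the conductor contribution. Since $N(1+i) = 2$, we have $N(m) = N((1+i)^5 c) = 32\, N(c)$, while $|D_K| = 4$, hence
\[
\frac{|D_K|\, N(m)}{(2\pi)^2} \;=\; \frac{4 \cdot 32\, N(c)}{4\pi^2} \;=\; \frac{32\, N(c)}{\pi^2},
\]
which reproduces the argument of the logarithm in the lemma. The remaining two pieces of the first line of \eqref{fef}, namely the $\Gamma'/\Gamma(1/2)$ term and the integral $\int_0^\infty \frac{e^{-t/2}}{1-e^{-t}}(\hat\phi(0)-\hat\phi(t/\mathcal{L}))\,\dif t$, transfer verbatim.

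Next I would rewrite the prime sum. Since $\Lambda$ is supported on prime powers, and since every odd prime ideal of $\mathcal{O}_K$ has a unique primary generator, we may convert the sum over integral ideals $(n)$ into a sum over primary primes $\varpi$ and $j \geq 1$:
\[
\sum_{(n)\subset\mathcal{O}_K} \frac{\chi_{i(1+i)^5c}(n)\, \Lambda(n)}{\sqrt{N(n)}}\, \hat{\phi}\!\left(\frac{\log N(n)}{\mathcal{L}}\right)
\;=\; \sum_{j \geq 1} S_j\bigl(\chi_{i(1+i)^5c}, \mathcal{L}; \hat{\phi}\bigr),
\]
noting that primes $\varpi \mid (1+i)^5 c$ are automatically absorbed because $\chi_{i(1+i)^5c}$ vanishes there (so whether they are included or excluded in the indexing is immaterial). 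Inserting this identity into \eqref{fef} together with the factor $-2/\mathcal{L}$ yields exactly the last term in the stated formula.

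There is no genuine obstacle here; the entire argument is bookkeeping built on top of \eqref{fef}. The only mildly subtle point, which is worth checking explicitly, is that the primary-generator convention on $\varpi \equiv 1 \bmod (1+i)^3$ correctly parametrizes the prime ideals in the sum $\sum_{(n)}$, so that no spurious unit factors appear. Once this is verified, combining the rewritten pieces gives the claimed explicit formula.
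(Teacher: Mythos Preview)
Your proposal is correct and takes essentially the same approach as the paper: the paper itself simply states that one applies \eqref{fef} to the special case $\chi = \chi_{i(1+i)^5c}$, and your write-up spells out the bookkeeping (the conductor computation $|D_K|N(m)/(2\pi)^2 = 32N(c)/\pi^2$ and the reindexing of the prime sum via primary generators) that the paper leaves implicit.
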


Using Lemma~\ref{lem2.4}, upon summing over $c$ against the weight function $w$, we obtain the following result (the formula in \eqref{Sstar}) for $D(\phi;w, X)$.

\begin{lemma}\label{lemma explicit formula all d}  Assume that $\phi$ is an even Schwartz test function whose Fourier transform has compact support. Then we have
\begin{align*}
\begin{split}
D(\phi;w, X) =  &\frac {\widehat \phi(0)}{ \mathcal{L} W(X)}\underset{(c, 1+i) =1 }{\sum \nolimits^{*}} w\left( \frac {N(c)}X \right)  \log N(c) +\frac {\widehat\phi(0)}{\mathcal{L}} \left( \log\frac {32}{\pi^2}+ 2\frac{\Gamma'}{\Gamma} \left( \frac 12 \right) \right ) \\
&\hspace*{1cm}  -\frac 2{\mathcal{L}W(X)} \underset{(c, 1+i) =1 }{\sum \nolimits^{*}}  w\left( \frac {N(c)}X \right) \sum_{j \geq 1} S_j(\chi_{i(1+i)^5c}, \mathcal{L};\hat{\phi}) +\frac{2}{\mathcal{L}}\int\limits_0^\infty\frac{e^{-x/2}}{1-e^{-x}}\left(\widehat{\phi}(0)-\widehat{\phi}\left(\frac{x}{\mathcal{L}}\right)\right) \dif x.
\end{split}
\end{align*}
\end{lemma}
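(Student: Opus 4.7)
The plan is to derive this identity by a direct substitution of the explicit formula from Lemma~\ref{lem2.4} into the definition~\eqref{D} of $D(\phi;w,X)$, followed by separating the terms according to their dependence on $c$.

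First I would recall from \eqref{D} that
\[
D(\phi;w, X) = \frac{1}{W(X)} \underset{(c, 1+i)=1}{\sum \nolimits^{*}} w\!\left(\frac{N(c)}{X}\right) S(\chi_{i(1+i)^5 c}, \phi),
\]
and then substitute the expression for $S(\chi_{i(1+i)^5c}, \phi)$ supplied by Lemma~\ref{lem2.4}. This yields four contributions to $D(\phi;w, X)$: one coming from $\mathcal{L}^{-1}\log(32N(c)/\pi^2)\widehat\phi(0)$, one from $\mathcal{L}^{-1}\cdot 2(\Gamma'/\Gamma)(1/2)\widehat\phi(0)$, one from the integral $\frac{2}{\mathcal{L}}\int_0^\infty \frac{e^{-t/2}}{1-e^{-t}}(\widehat\phi(0)-\widehat\phi(t/\mathcal{L}))\,\dif t$, and one from the prime-power sum $-\frac{2}{\mathcal{L}}\sum_{j\ge 1} S_j(\chi_{i(1+i)^5c}, \mathcal{L}; \widehat\phi)$.

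The key bookkeeping step is to split $\log(32 N(c)/\pi^2) = \log(32/\pi^2) + \log N(c)$. The $\log N(c)$ piece depends on $c$ and, after averaging against the weight $w(N(c)/X)/W(X)$, produces the first term on the right-hand side of the claimed identity. The $\log(32/\pi^2)$ piece and the $2(\Gamma'/\Gamma)(1/2)\widehat\phi(0)$ piece are independent of $c$, so the averaging factor $\frac{1}{W(X)}\sum^{*}_{c} w(N(c)/X)$ equals $1$ by the definition of $W(X)$, and these combine to give the second term. The integral term is also $c$-independent and carries through unchanged to give the fourth term. Finally, the prime-power sum, still bound to $c$ through $\chi_{i(1+i)^5 c}$, gives the third term after pulling the $-2/\mathcal{L}$ outside the average over $c$.

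There is no real obstacle to this derivation: it is a direct substitution together with the observation that constant-in-$c$ terms average trivially against $w(N(c)/X)/W(X)$. The only things to verify are that the sums and integral converge absolutely and may be interchanged with the average over $c$, which is immediate since $\widehat\phi$ is compactly supported (making $\sum_{j\ge 1} S_j$ a finite sum of finite sums for each $c$) and since the integrand $e^{-t/2}(1-e^{-t})^{-1}(\widehat\phi(0)-\widehat\phi(t/\mathcal{L}))$ is $O(e^{-t/2})$ at infinity and $O(1)$ near $0$. Collecting the four contributions gives exactly the stated expression, completing the proof.
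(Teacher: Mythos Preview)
Your proposal is correct and follows essentially the same approach as the paper: the paper simply states that the result follows from Lemma~\ref{lem2.4} upon summing over $c$ against the weight function $w$, and your argument carries out precisely this substitution with the natural bookkeeping spelled out in more detail.
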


   Now, let $w(t)$ be an even, non-zero and non-negative Schwartz function as the theorems.  We define
\begin{align} \label{g}
 g(y)=\widetilde{w}\left(\sqrt{2 }y \right), \quad g_1(y)=\widetilde{g}\left( \sqrt{y} \right),
\end{align}
  where we recall that for any function $W$, the definition of $\widetilde{W}$ is given in \eqref{wtilde}. \newline

  Our next lemma establishes a relation between the Mellin transforms of $g$ and $g_2$. This is a generalization of \cite[(2.3)]{FPS1}.
\begin{lemma}
\label{Melliniden}
  For any $z \in \mc, z \neq 0, -1$, we have
\begin{equation*}
\zeta_K(z+1)\mathcal M g_1(z+1)=\zeta_K(-z) \mathcal M g(-z).
\end{equation*}
\end{lemma}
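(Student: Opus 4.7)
My plan is to prove Lemma~\ref{Melliniden} along the lines of Riemann's derivation of the functional equation of $\zeta(s)$ from Poisson summation: apply \eqref{tripoi} to $g$ and then take a Mellin transform with a suitable regularization to absorb the $m=0$ and $k=0$ terms.

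First I would apply \eqref{tripoi} with $W = g$. Since the definition \eqref{g} of $g_1$ gives $\widetilde g(\sqrt{N(k)X}) = g_1(N(k)X)$, this produces the identity
\begin{equation*}
f(X) := \sum_{m \in \mathcal{O}_K} g\!\left(\frac{N(m)}{X}\right) = X \sum_{k \in \mathcal{O}_K} g_1\bigl(N(k)X\bigr), \qquad X > 0.
\end{equation*}
Isolating the $m = 0$ and $k = 0$ contributions, the rapid-decay bounds recorded in \eqref{bounds} yield $f(X) - g(0) = O_A(X^A)$ as $X \to 0^+$ and $f(X) - X g_1(0) = O_A(X^{-A})$ as $X \to \infty$, for every $A > 0$.

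Next I would introduce the entire auxiliary function
\begin{equation*}
A(s) = \int_0^1 X^{s-1}\bigl(f(X) - g(0)\bigr)\,\dif X + \int_1^\infty X^{s-1}\bigl(f(X) - X g_1(0)\bigr)\,\dif X.
\end{equation*}
For $\Re(s) < -1$, completing the $\int_1^\infty$ integrand to match the one on $(0,1)$ gives
\begin{equation*}
A(s) = \int_0^\infty X^{s-1}\bigl(f(X) - g(0)\bigr)\,\dif X - \frac{g(0)}{s} + \frac{g_1(0)}{s+1},
\end{equation*}
and inserting $\sum_{m \neq 0} g(N(m)/X)$, swapping sum and integral (justified in this strip), and applying $u = N(m)/X$ identifies the integral as $|U_K|\,\zeta_K(-s)\,\mathcal M g(-s)$. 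Symmetrically, for $\Re(s) > 0$ one completes the $\int_0^1$ integrand instead, swaps sum and integral in $\sum_{k \neq 0}$, and applies $v = N(k)X$ to obtain
\begin{equation*}
A(s) = |U_K|\,\zeta_K(s+1)\,\mathcal M g_1(s+1) - \frac{g(0)}{s} + \frac{g_1(0)}{s+1}.
\end{equation*}

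Since $A(s)$ is entire, comparing the two representations and cancelling both the identical pole terms and the common factor $|U_K|$ yields
\begin{equation*}
\zeta_K(-s)\,\mathcal M g(-s) = \zeta_K(s+1)\,\mathcal M g_1(s+1),
\end{equation*}
initially in the respective strips and then, by analytic continuation, for all $s \in \mathbb{C}\setminus\{0,-1\}$. Setting $s = z$ gives the lemma. The main technical point is justifying the two interchanges of summation and integration (which rests on absolute convergence of a $\zeta_K$-type series in each strip) and carefully tracking the boundary contributions produced by the truncation at $X = 1$; once this bookkeeping is in place the argument is routine.
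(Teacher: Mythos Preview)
Your proposal is correct and follows essentially the same Riemann-style approach as the paper: apply the Poisson summation \eqref{tripoi} to $g$, split a Mellin-type integral at $1$, and use the rapid decay of $g$ and $\widetilde g$ from \eqref{bounds} to obtain an entire function that represents both $\zeta_K(-z)\mathcal M g(-z)$ and $\zeta_K(z+1)\mathcal M g_1(z+1)$ up to identical polar terms. The only cosmetic difference is packaging: the paper computes each side separately (starting from $\zeta_K(z)\mathcal M g(z)=\tfrac14\sum_{k\neq 0}\int_0^\infty g(N(k)t)t^{z-1}\dif t$, splitting at $t=1$, applying \eqref{tripoi} on $(0,1)$) and then compares the resulting expressions, whereas you route both through the single auxiliary function $A(s)$.
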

\begin{proof}
  Our proof of the lemma is motivated by Riemann's proof of the functional equation of the Riemann zeta function $\zeta(s)$ (see \cite[\S 8]{Da}). We first note that, for $\Re(z)>1$,
\begin{align*}
\zeta_K(z) \mathcal M g(z)=& \frac 14\sum_{\substack{ k \in \mathcal O_K \\ k \neq 0}}\int\limits^{\infty}_0g(t)\left( \frac {t}{N(k)} \right)^z \frac {\dif t}{t} = \frac 14\sum_{\substack{ k \in \mathcal O_K \\ k \neq 0}}\int\limits^{\infty}_0g(N(k) t)t^z \frac {\dif t}{t}  \\
&= -\frac 14\frac {g(0)}z+ \frac 14  \int\limits^{\infty}_1  \sum_{\substack{ k \in \mathcal O_K \\ k \neq 0}} g(N(k) t)  t^z \frac {\dif t}{t}+ \frac 14\int\limits^{1}_0 \sum_{\substack{ k \in \mathcal O_K }}g(N(k) t) t^z \frac {\dif t}{t}
\end{align*}
Now applying \eqref{tripoi} to the second of the above sums and another change of variables, we get
\[ \zeta_K(z) \mathcal M g(z) = -\frac 14\frac {g(0)}z+\frac 14\frac {\widetilde g(0)}{z-1}+  \frac 14\int\limits^{\infty}_1 \sum_{\substack{ k \in \mathcal O_K \\ k \neq 0}}g(N(k) t) t^z \frac {\dif t}{t}+ \frac 14\int\limits^{\infty}_1 \sum_{\substack{ k \in \mathcal O_K \\ k \neq 0}}\widetilde g(\sqrt{N(k) t}) t^{-z+1}  \frac {\dif t}{t}. \]
  Note that the last two integrals converge absolutely for all $z \in \mc$, by applying estimation \eqref{bounds} to both $g$ and $\widetilde g$. The last expression above thus gives an analytical extension of $\zeta_K(z) \mathcal M g(z)$ to all $z \in \mc, z \neq 0$, 1. \newline

  Similarly, we also deduce from \eqref{tripoi} that for $\Re(z)>0$,
\begin{align*}
\zeta_K(z+1) \mathcal M g_1(z+1)=& \frac 14\sum_{\substack{ k \in \mathcal O_K \\ k \neq 0}}\int\limits^{\infty}_0 \widetilde g \left( \sqrt{t} \right) \left( \frac {t}{N(k)} \right)^{z+1} \frac {\dif t}{t} \\
=& \frac 14\frac {g(0)}z-\frac 14\frac {\widetilde g(0)}{z+1}+\frac 14\int\limits^{\infty}_1 \sum_{\substack{ k \in \mathcal O_K \\ k \neq 0}}\widetilde g \left( \sqrt{N(k)t} \right) t^{z+1}   \frac {\dif t}{t}+\frac 14\int\limits^{\infty}_{1} \sum_{\substack{ k \in \mathcal O_K \\ k \neq 0}} g(N(k)t) t^{-z}   \frac {\dif t}{t}.
\end{align*}
  Once again by applying estimation \eqref{bounds} to both $g$ and $\widetilde g$, we see that the last two integrals above converge absolutely for all $z \in \mc$, so the last expression above thus gives an analytical extension of $\zeta_K(z+1) \mathcal M g_1(z+1)$ to all $z \in \mc, z \neq 0$, 1. Now, by comparing the above expressions for $\zeta_K(z) \mathcal M g(z)$ and $\zeta_K(z+1) \mathcal M g_1(z+1)$, we readily deduce the assertion of the lemma.
\end{proof}

\section{Analyzing sums over primes} \label{anaprime}

    We devote this section to the analysis of the sum over primes in \eqref{Sstar}.  We first separate the odd and the even prime powers, by writing
\begin{align}
\label{Sodd}
 S_{\text{odd}}= -\frac 2{\mathcal{L}W(X)} \underset{(c, 1+i) =1 }{\sum \nolimits^{*}} w\left( \frac {N(c)}X \right) \sum_{\substack{j \geq 1 \\ j \equiv 1 \pmod 2}} S_j(\chi_{i(1+i)^5c},\mathcal{L};\hat{\phi})
\end{align}
 and similarly for $S_{\text{even}}$.  Moreover,  it follows from Lemma \ref{lemma count of squarefree}, \eqref{mer} and \eqref{W} that
\begin{equation}
\label{Seven}
S_{\text{even}} = - \frac 2{\mathcal{L}}\sum_{\substack{\varpi \equiv 1 \bmod {(1+i)^3} \\ j\geq 1}} \frac{\log
   N(\varpi)}{N(\varpi)^{j}} \left(  1+\frac 1{N(\varpi)} \right)^{-1} \widehat \phi\left( \frac{2j \log
   N(\varpi)}{\mathcal{L}} \right) +O \left(X^{-3/4+\varepsilon }\right),
\end{equation}

\subsection{Estimation of $S_{\text{even}}$}

    We first expand $S_{\text{even}}$ into descending powers of $\LL$, we generalize \cite[Lemma 3.7]{FPS} to obtain the following result.
\begin{lemma}\label{lemma Seven}
Suppose that $\sigma=\sup \left(\text{supp } \widehat \phi \right)<\infty$. Then for any integer $M \geq 1$, we have the expansion
\begin{align}
\label{Sevenexpn}
  S_{\text{even}} =-\frac{\phi(0)}2 + \sum_{m=1}^M \frac{d_m \widehat \phi^{(m-1)}(0)}{\mathcal{L}^m} +O\left( \frac 1{\mathcal{L}^{M+1}} \right),
\end{align}
where the coefficients $d_m$ are real numbers that can be given explicitly.
\end{lemma}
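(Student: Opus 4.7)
The plan is to decompose the right-hand side of \eqref{Seven} according to whether $j=1$ or $j \ge 2$, and treat the two parts separately; the ambient error $O(X^{-3/4+\varepsilon})$ is easily absorbed into $O(\mathcal{L}^{-M-1})$.

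For the $j \ge 2$ tail, the factor $N(\varpi)^{-j}$ with $j\ge 2$ makes the double sum over primary primes $\varpi$ and over $j$ absolutely convergent, even after multiplying the summand by $(\log N(\varpi))^{k+1} j^{k}$ for any fixed $k \ge 0$. Taylor-expanding $\widehat\phi$ at $0$ to order $M$, with uniform remainder $O(u^{M})$ on $[-\sigma,\sigma]$ (where $\sigma = \sup(\mathrm{supp}\,\widehat\phi)$), and using evenness of $\widehat\phi$ so that $\widehat\phi^{(k)}(0)=0$ for odd $k$, yields
\[
-\frac{2}{\mathcal L}\sum_{\substack{\varpi\equiv 1\bmod (1+i)^3\\ j\ge 2}}\frac{\log N(\varpi)}{N(\varpi)^{j}(1+1/N(\varpi))}\,\widehat\phi\!\left(\frac{2j\log N(\varpi)}{\mathcal L}\right) \;=\; \sum_{m=1}^{M} \frac{d_{m}^{(\ge 2)}\,\widehat\phi^{(m-1)}(0)}{\mathcal L^{m}} + O(\mathcal L^{-M-1}),
\]
with each $d_{m}^{(\ge 2)}$ an explicit absolutely convergent double sum.

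For the $j=1$ contribution, set $g(t) := \widehat\phi(2\log t/\mathcal L)/(t+1)$, so that $T(1) := \sum_{\varpi} \log N(\varpi)\, g(N(\varpi)) = \int_{0}^{\infty} g(t)\, d\theta_{K}(t)$, where $\theta_{K}(t) = \sum_{N(\varpi)\le t,\,\varpi\equiv 1\bmod(1+i)^{3}} \log N(\varpi)$. Under GRH, Lemma \ref{lem2.7} gives $\theta_{K}(t) = t + E(t)$ with $E(t) \ll t^{1/2+\varepsilon}$, and the boundary terms vanish since $g$ is supported in $(e^{-\sigma\mathcal L/2}, e^{\sigma\mathcal L/2})$, so partial summation yields $T(1) = \int_{0}^{\infty} g(t)\,dt - \int_{0}^{\infty} g'(t) E(t)\,dt$. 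The first integral equals $\phi(0)\mathcal L/4$ \emph{exactly}: the change of variables $u = 2\log t/\mathcal L$ turns it into $(\mathcal L/2)\int_{\mathbb R} \widehat\phi(u)(1+e^{-u\mathcal L/2})^{-1}\,du$, and writing $(1+e^{-u\mathcal L/2})^{-1} = \tfrac{1}{2}(1 + \tanh(u\mathcal L/4))$ the $\tanh$-part integrates to zero against the even function $\widehat\phi$. After the prefactor $-2/\mathcal L$ this contributes the main term $-\phi(0)/2$ of \eqref{Sevenexpn}.

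The lower-order terms $d_{m}^{(1)}\widehat\phi^{(m-1)}(0)/\mathcal L^{m}$ come from $(2/\mathcal L)\int g'(t) E(t)\,dt$. With
\[
g'(t) = -\frac{\widehat\phi(2\log t/\mathcal L)}{(t+1)^{2}} + \frac{2\,\widehat\phi'(2\log t/\mathcal L)}{\mathcal L\, t(t+1)},
\]
Taylor-expanding both $\widehat\phi$ and $\widehat\phi'$ at $0$ to order $M$, the coefficient of $\widehat\phi^{(m-1)}(0)/\mathcal L^{m}$ becomes an absolutely convergent integral of the form $\int_{1}^{\infty} E(t)(\log t)^{k}/(t+1)^{2}\,dt$ or $\int_{1}^{\infty} E(t)(\log t)^{k}/(t(t+1))\,dt$, each converging because the integrand is $\ll t^{-3/2+\varepsilon}(\log t)^{k}$ under GRH; the Taylor remainders integrate to $O(\mathcal L^{-M})$, hence to $O(\mathcal L^{-M-1})$ after the outer $2/\mathcal L$. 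Combining with the $j\ge 2$ expansion gives \eqref{Sevenexpn} with $d_{m} = d_{m}^{(1)} + d_{m}^{(\ge 2)}$. The main technical obstacle is a careful reindexing: the two pieces of $g'$ produce coefficients at different orders of $1/\mathcal L$, one must invoke evenness of $\widehat\phi$ to see that $d_{m}$ is nontrivial only for odd $m$ (since $\widehat\phi^{(m-1)}(0)=0$ for even $m$), and one must verify that every Taylor remainder, integrated against $E(t)$ with its $t^{-3/2+\varepsilon}(\log t)^{k}$ majorant, indeed fits within the target error $O(\mathcal L^{-M-1})$.
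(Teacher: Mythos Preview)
Your proof is correct and follows the same overall structure as the paper's: split into $j\ge 2$ and $j=1$, Taylor-expand $\widehat\phi$ at $0$ for the $j\ge 2$ piece, and for $j=1$ invoke the GRH prime-counting estimate $\theta_K(t)=t+E(t)$ via partial summation, then Taylor-expand again. The one genuine difference is in how you handle the weight $(1+1/N(\varpi))^{-1}$ when $j=1$. The paper first expands it as a geometric series, separating the $N(\varpi)^{-1}$ piece---on which it performs partial summation with integrand $t^{-1}\widehat\phi(2\log t/\mathcal L)$, so that the main term $-\phi(0)/2$ drops out of $\int_0^\infty\widehat\phi(u)\,du$---from the tail $\sum_{l\ge 1}(-1)^l N(\varpi)^{-l-1}$, which produces the auxiliary constants $C_1(m)$. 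You instead keep the full factor as $1/(t+1)$ inside $g(t)$ and extract the main term via the parity identity $(1+e^{-x})^{-1}=\tfrac12(1+\tanh(x/2))$. This is neater: it avoids the $C_1(m)$ bookkeeping entirely, at the cost of having $E(t)$ integrated against the slightly more complicated kernels $(t+1)^{-2}$ and $(t(t+1))^{-1}$ rather than $t^{-2}$.

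One small slip: your coefficient integrals should run over $(0,\infty)$, not $(1,\infty)$. Since your partial summation is set up from $t=0$ with $E(t)=\theta_K(t)-t$ (hence $E(t)=-t$ on $[0,2)$), the segment $[0,1]$ contributes a nonzero, $\mathcal L$-independent quantity such as $\int_0^1(-t)(\log t)^k/(t+1)^2\,dt$ to each $d_m^{(1)}$. This does not affect the validity of the expansion \eqref{Sevenexpn}, only the explicit values of the constants.
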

\begin{proof}
  It suffices to show that the expansion given in \eqref{Sevenexpn} is valid if we ignore the $O \left(X^{-3/4+\varepsilon} \right)$ term in \eqref{Seven}. As $\sigma$ is finite, the sum in \eqref{Seven} is finite as we must have $N(\varpi)^{2j} \leq X^{\sigma}$. It follows that sum of the terms with $ j \geq 2$ can be expanded as follows:
\begin{align}
\label{eq: evencase}
\begin{split}
-\frac 2{\mathcal{L}} &\sum_{\substack{\varpi \equiv 1 \bmod {(1+i)^3} \\ j\geq 2 , \ N(\varpi)^{2j} \leq X^{\sigma}}} \frac{\log
   N(\varpi)}{N(\varpi)^{j}} \left(  1+\frac 1{N(\varpi)} \right)^{-1} \Bigg(\sum_{m=0}^M \frac{\widehat \phi^{(m)}(0)}{m!} \left(\frac{2j \log N(\varpi)}{\mathcal{L}} \right)^m+O\bigg( \bigg(\frac{ 2j\log N(\varpi)}{\mathcal{L}}\bigg)^{M+1} \bigg) \Bigg) \\
&= - \frac {2 }{\mathcal{L}} \sum_{m=0}^M \frac{\widehat \phi^{(m)}(0)}{m!\mathcal{L}^m}\sum_{\substack{\varpi \equiv 1 \bmod {(1+i)^3} \\ j\geq 2 , \ N(\varpi)^{2j} \leq X^{\sigma}}} \frac{\log N(\varpi)(2j \log N(\varpi))^m}{N(\varpi)^{j}} \left(  1+\frac 1{N(\varpi)}\right)^{-1} +O\left( \mathcal{L}^{-M-2} \right) \\
&=  - \frac {2 }{\mathcal{L}} \sum_{m=0}^M \frac{\widehat \phi^{(m)}(0)}{m!\mathcal{L}^m}\sum_{\substack{\varpi \equiv 1 \bmod {(1+i)^3} \\ j\geq 2 }} \frac{\log N(\varpi)(2j \log N(\varpi))^m}{N(\varpi)^{j}} \left(  1+\frac 1{N(\varpi)}\right)^{-1} +O\left( \mathcal{L}^{-M-2} \right),
\end{split}
\end{align}
  by noting the inner sum of the last expression above converges. \newline

 It remains to expand the terms with $j=1$. For this,  we first note that, using the Taylor expansion of $\hat{\phi}$ around the origin and rewriting $(1+N(\varpi)^{-1})^{-1}$ as a geometric series,
\begin{align}
\label{eq: oddcase}
\begin{split}
& -\frac 2{\mathcal{L}}\sum_{\substack{\varpi \equiv 1 \bmod {(1+i)^3}}} \frac{\log
   N(\varpi)}{N(\varpi)} \left(  1+\frac 1{N(\varpi)} \right)^{-1} \widehat \phi\left( \frac{2 \log
   N(\varpi)}{\mathcal{L}} \right)  \\
=&-\frac 2{\mathcal{L}}\sum_{\substack{\varpi \equiv 1 \bmod {(1+i)^3}}} \frac{\log
   N(\varpi)}{N(\varpi)} \left(  1+\frac 1{N(\varpi)} \right)^{-1} \Bigg(\sum_{m=0}^M \frac{\widehat \phi^{(m)}(0)}{m!} \left(\frac{2 \log N(\varpi)}{\mathcal{L}} \right)^m+O\bigg( \bigg(\frac{ \log N(\varpi)}{\mathcal{L}}\bigg)^{M+1} \bigg) \Bigg) \\
 =&-\frac 2{\mathcal{L}}\sum_{\substack{\varpi \equiv 1 \bmod {(1+i)^3}}} \frac{\log
   N(\varpi)}{N(\varpi)} \Bigg(\sum_{m=0}^M \frac{\widehat \phi^{(m)}(0)}{m!} \left(\frac{2 \log N(\varpi)}{\mathcal{L}} \right)^m+O\bigg( \bigg(\frac{ \log N(\varpi)}{\mathcal{L}}\bigg)^{M+1} \bigg) \Bigg)  \\
  & \hspace*{3cm} -\sum_{m=0}^M \frac{\widehat \phi^{(m)}(0)C_1(m)}{m!\mathcal{L}^{m+1}}+O\big(\mathcal{L}^{-M-1} \big) \\
 =&-\frac 2{\mathcal{L}}\sum_{\substack{\varpi \equiv 1 \bmod {(1+i)^3}}} \frac{\log
   N(\varpi)}{N(\varpi)}\widehat \phi\left( \frac{2 \log
   N(\varpi)}{\mathcal{L}} \right)  -\sum_{m=0}^M \frac{\widehat \phi^{(m)}(0)C_1(m)}{m!\mathcal{L}^{m+1}}+O_{K}\big(\mathcal{L}^{-M-1}\big)
\end{split}
\end{align}
   where
\begin{align*}
& C_1(m) =\sum_{\substack{\varpi \equiv 1 \bmod {(1+i)^3}}} \sum_{l\geq 1} (-1)^l  \frac{(2\log N(\varpi))^{m+1}}{N(\varpi)^{l+1}}
< \infty.
\end{align*}

Next, we note that
\begin{align} \label{grhbound}
E(t) := \sum_{\substack{ N(\varpi) \leq t \\ \varpi \equiv 1 \bmod {(1+i)^3}}} \log N(\varpi)- t \ll t^{1/2+\varepsilon}
\end{align}
from \eqref{lem2.7eq}. \newline

   We then apply partial summation to arrive at, with $E(t)$ defined in \eqref{grhbound},
\begin{align}
\label{eq: oddcasemain}
\begin{split}
& -\frac 2{\mathcal{L}}\sum_{\substack{\varpi \equiv 1 \bmod {(1+i)^3}}} \frac{\log
   N(\varpi)}{N(\varpi)}\widehat \phi\left( \frac{2 \log
   N(\varpi)}{\mathcal{L}} \right)= -\frac 2{\mathcal{L}}\int\limits_{1}^{\infty}  \frac 1t \widehat \phi\left( \frac{2 \log t}{\mathcal{L}} \right) \dif ( t+E(t)) \\
& = -\int\limits_{0} ^{\infty}   \widehat \phi\left(u\right)\, \dif u
+\frac 2{\mathcal{L}}\int\limits_{1}^{\infty} E(t) \frac{\dif}{\dif t} \left(\frac{1}{t} \widehat \phi\left( \frac{2 \log t}{\mathcal{L}} \right) \right) \, \dif t = -\frac 12 \phi(0)+\frac 2{\mathcal{L}}\int\limits_{1}^{\infty} E(t) \frac{\dif}{\dif t} \left( \frac{1}{t} \widehat \phi\left( \frac{2 \log t}{\mathcal{L}} \right) \right) \, \dif t .
\end{split}
\end{align}
  We can now expand the derivative in the last integrand in \eqref{eq: oddcasemain} into Taylor expansions involving powers of $2\log t/\mathcal{L}$ and use the corresponding series to calculate the last integral above. Note that the new integrals emerging from this process are all convergent because of the bound \eqref{grhbound}.  The assertion of the lemma now follows by combining \eqref{eq: evencase}, \eqref{eq: oddcase} and \eqref{eq: oddcasemain}.
\end{proof}

\subsection{Estimation of $S_{\text{odd}}$: Poisson summation}
\label{section Poisson 8d}

   Starting from this section, we shall concentrate on the estimation of $S_{\text{odd}}$. First note that the contribution from the terms with $j\geq 3$ in \eqref{Sodd} is $O(X^{-3/4+\varepsilon})$ by Lemma \ref{lemma count of squarefree}. It thus remains to treat the case for $j=1$. For this case,  we use the M\"obius function to detect the condition that $c$ is square-free to get
\begin{align*}
S_{\text{odd}} =& -\frac 2{\mathcal{L} W(X)  }\sum_{\substack {l \equiv 1 \mod (1+i)^3}}\mu_{[i]}(l)\sum_{\substack{ \varpi \equiv 1 \bmod {(1+i)^3} }} \frac {\log N(\varpi)}{\sqrt{N(\varpi)}} \hat{\phi} \left( \frac {\log N(
   \varpi)}{\log X} \right) \sum_{(c, 1+i)=1} \leg {i(1+i)cl^2}{\varpi} w \left( \frac {N(cl^2)}{X} \right)\\
  &  \hspace*{3cm} +O\big(X^{-3/4+\varepsilon}\big).
\end{align*}
We divide the sum over $l$ above into two parts, one over $l \leq Z$ and the other over $l>Z$, with $Z$ to be chosen optimally later. Note that if $c$ is odd, then $i(1+i)^5cl^2$ is never a square. Similar to the treatment of $S_R(X,Y; \hat{\phi}, \Phi)$ in \cite[Section 3.3]{G&Zhao4} except that we use Lemma \ref{lem2.7} here instead of \cite[Lemma 2.5]{G&Zhao4}, we get that the terms with $l> Z$ are
$$ \ll X^{\varepsilon}(\log Z)^3Z^{-1}.$$
   For the terms with $l \leq Z$, we apply the Poisson summation \eqref{quadpoi} given in Lemma \ref{Poissonsum} and argue as in \cite{G&Zhao4} (the treatment here is essentially the same as that of $S_M(X,Y; \hat{\phi}, \Phi)$ in \cite[Section 3.2]{G&Zhao4}) to arrive at the following lemma.

\begin{lemma}
 Suppose that GRH is true. We have for any $Z \geq 1$ and any $\epsilon>0$,
\begin{equation} \label{equation thing to prove in lemma Poisson}
\begin{split}
S_{\text{odd}} =& -\frac X{\mathcal{L}  W(X)}\sum_{\substack {N(l) \leq Z \\ l \equiv 1 \bmod {(1+i)^3}}} \frac {\mu_{[i]}(l)}{N(l^2)} \sum_{k \in
   \mz[i]}(-1)^{N(k)} \sum_{\varpi \equiv 1 \bmod {(1+i)^3}} \frac {\log N(\varpi)}{N(\varpi)}\leg {kl^2}{\varpi}\hat{\phi}\left( \frac {\log N(
   \varpi)}{\log X} \right) \widetilde{w}\left(\sqrt{\frac {N(k)X}{2N(l^2\varpi)}}\right) \\
&\hspace*{3cm} +O\left( X^{-3/4+\varepsilon}+X^{\varepsilon}(\log Z)^3Z^{-1} \right).
\end{split}
\end{equation}
\end{lemma}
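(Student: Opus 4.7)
The plan is to derive \eqref{equation thing to prove in lemma Poisson} by following the roadmap sketched in the paragraph preceding the statement, together with the parallel treatment of analogous sums in \cite{G&Zhao4}. The first step is to bound the contribution of $j\geq 3$ in \eqref{Sodd}. For odd $j\geq 3$ the character $\leg{\cdot}{\varpi^j}$ reduces to $\leg{\cdot}{\varpi}$ and is non-principal on $c$, so Lemma \ref{lemma count of squarefree} applied with $n=\varpi$ bounds the inner $c$-sum by $N(\varpi)^{3/8+\varepsilon}X^{1/4+\varepsilon}$. The extra weight $N(\varpi)^{-j/2}$ with $j\geq 3$ renders the $\varpi$-sum absolutely convergent, and dividing by $W(X)\asymp X$ via \eqref{W} yields an $O(X^{-3/4+\varepsilon})$ contribution. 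In the remaining $j=1$ term I would detect the square-freeness of $c$ by $\mu^2_{[i]}(c)=\sum_{l^2\mid c}\mu_{[i]}(l)$, writing $c=l^2m$ with $l$ primary, and split the $l$-sum at the threshold $N(l)=Z$.

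For the tail $N(l)>Z$, Poisson summation is not used; instead one controls the inner sum over primes $\varpi$ directly via Lemma \ref{lem2.7}, whose GRH bound \eqref{lem2.7eq} produces square-root cancellation in the prime sum. Partial summation against the compactly supported $\hat{\phi}$, together with the logarithmic factor from the explicit dependence on the conductor in \eqref{lem2.7eq}, reproduces the tail estimate $\ll X^{\varepsilon}(\log Z)^3/Z$, exactly as in the treatment of $S_R$ in \cite[Section 3.3]{G&Zhao4}.

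For the head $N(l)\leq Z$, I would apply the Poisson formula \eqref{quadpoi} of Lemma \ref{Poissonsum} to the inner sum over $m$ coprime to $1+i$, weighted by $w(N(l^2m)/X)$ and carrying the quadratic character $\leg{\cdot}{\varpi}$ extracted from $\leg{i(1+i)ml^2}{\varpi}$ after pulling out the unit and $l^2$ factors. Using the Gauss-sum evaluation $g(k,\varpi)=\leg{ik}{\varpi}N(\varpi)^{1/2}$ for primary $\varpi$, and carefully accounting for the coprimality constraint $(m,1+i)=1$, one arrives at the stated main-term sum with the factor $(-1)^{N(k)}$. The main technical obstacle is precisely this last step: because the character $\chi_{i(1+i)^5c}$ carries the unit prefix $i$ and the power $(1+i)^5$, every sign and unit must be tracked through the Poisson transformation so that the factor $(-1)^{N(k)}$ emerges correctly. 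These manipulations are essentially identical to those performed for $S_M$ in \cite[Section 3.2]{G&Zhao4}, and I would appeal to that reference rather than redo them in detail.
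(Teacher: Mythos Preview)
Your proposal is correct and follows essentially the same approach as the paper: bound the $j\geq 3$ terms via Lemma~\ref{lemma count of squarefree}, detect square-freeness by M\"obius, split the $l$-sum at $Z$, handle the tail $N(l)>Z$ through the GRH prime-sum estimate of Lemma~\ref{lem2.7} (in place of \cite[Lemma 2.5]{G&Zhao4}) exactly as for $S_R$ in \cite[Section~3.3]{G&Zhao4}, and treat the head $N(l)\leq Z$ by Poisson summation as for $S_M$ in \cite[Section~3.2]{G&Zhao4}. The paper's own argument is precisely this outline, with the same appeals to \cite{G&Zhao4} in lieu of reproducing the computations.
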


   We now generalize \cite[Lemma 2.7]{FPS} to further analyze the sums in \eqref{equation thing to prove in lemma Poisson}, obtaining the following result.
\begin{lemma}
\label{lemma:small s}
  Suppose that GRH is true and that $\sigma=\text{sup}(\text{supp } \widehat \phi)<\infty$. Then we have for any $1 \leq Z \leq X^{2}$ and any $\varepsilon>0$
\begin{align}
\label{Soddformula}
\begin{split}
S_{\text{\emph{odd}}} =& \frac X{ W(X)  } \sum_{\substack{N(l) \leq Z \\ l \equiv 1 \bmod {(1+i)^3}}} \frac {\mu_{[i]}(l)}{N(l^2)} \Big (\frac 12 I_{(1+i)l}(X)-I_l(X) \Big ) \\
& \hspace*{3cm} +O\left(X^{-3/4+\varepsilon}+X^{\varepsilon}(\log Z)^3Z^{-1}+ZX^{\sigma/2-1+\varepsilon}+X^{-1/2+\varepsilon}Z^{\varepsilon} \right),
\end{split}
\end{align}
  where
\begin{equation}
\label{Il2}
  I_l(X) =  \int\limits_{0}^{\infty} \widehat \phi( u )   \sum_{\substack {k \in \mz[i] \\ k \neq 0}} \widetilde{w}\left(2N(k)\sqrt{\frac {X^{1-u}}{2N(l^2)}}\right) \dif u.
  \end{equation}
\end{lemma}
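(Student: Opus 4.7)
The plan is to adapt the partial-summation argument of \cite[Lemma 2.7]{FPS} to the Gaussian setting, analyzing the inner prime sum in \eqref{equation thing to prove in lemma Poisson} for each fixed pair $(k,l)$. Define
$$ T_{k,l}(X) := \sum_{\varpi \equiv 1 \bmod (1+i)^3} \frac{\log N(\varpi)}{N(\varpi)} \leg{kl^2}{\varpi} \widehat\phi\!\left(\frac{\log N(\varpi)}{\mathcal{L}}\right) \widetilde w\!\left(\sqrt{\frac{N(k)X}{2N(l^2\varpi)}}\right).$$
The $k=0$ term vanishes since $\leg{0}{\varpi}=0$. For $k\neq 0$, quadratic reciprocity \eqref{quadrecip} shows that $\varpi \mapsto \leg{kl^2}{\varpi}$ is a quadratic Hecke character, which is principal precisely when $kl^2$ is a square in $\mz[i]$; as $\mu_{[i]}(l) \neq 0$ forces $l$ squarefree, this reduces to demanding that $k$ itself be a square.

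Apply partial summation with weight $F_{k,l}(t) = t^{-1}\widehat\phi(\log t/\mathcal{L})\widetilde w(\sqrt{N(k)X/(2N(l^2)t)})$ and invoke Lemma~\ref{lem2.7} in the form $\pi^*_\chi(t) = \delta_\chi\, t + O(t^{1/2+\varepsilon}(N(kl))^\varepsilon)$. In the principal-character case, after the substitution $t = X^u$, the main term reads
$$\int_1^\infty F_{k,l}(t)\,\dif t = \mathcal{L}\int_0^\infty \widehat\phi(u)\, \widetilde w\!\left(\sqrt{N(k)}\cdot \sqrt{\tfrac{X^{1-u}}{2N(l^2)}}\right)\dif u.$$
Parametrising squares by $k = n^2$ with $n \neq 0$ (each square arising from $\pm n$, together with $(-1)^{N(n^2)} = (-1)^{N(n)}$), the square-$k$ contribution weighted by $(-1)^{N(k)}$ becomes
$$\frac{\mathcal{L}}{2}\sum_{n\neq 0}(-1)^{N(n)}\int_0^\infty \widehat\phi(u)\,\widetilde w\!\left(N(n)\sqrt{\tfrac{X^{1-u}}{2N(l^2)}}\right)\dif u.$$
Applying the parity split $\sum_{n\neq 0}(-1)^{N(n)}f(n) = 2\sum_{n'\neq 0}f((1+i)n') - \sum_{n\neq 0}f(n)$ together with the identities $N((1+i)n') = 2N(n')$ and $N(((1+i)l)^2) = 4N(l^2)$ rewrites this expression as $\mathcal{L}\, I_l(X) - \tfrac{\mathcal{L}}{2}\, I_{(1+i)l}(X)$. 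Substituting back into \eqref{equation thing to prove in lemma Poisson}, the $\mathcal{L}$ cancels against the prefactor and the sign flips, delivering the claimed main term $\frac{X}{W(X)}\sum_l \frac{\mu_{[i]}(l)}{N(l^2)}\big(\tfrac12 I_{(1+i)l}(X) - I_l(X)\big)$.

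The main obstacle is the aggregate error analysis. For non-square $k$ (non-principal $\chi$), the partial-summation error is controlled by $\int |F_{k,l}'(t)|\, t^{1/2+\varepsilon}(N(kl))^\varepsilon\,\dif t$; this must be summed over $l$ with $N(l)\leq Z$ and over $k$ in the effective range $N(k) \ll N(l^2)X^{\sigma-1}$ forced by the rapid decay \eqref{bounds} of $\widetilde w$. Boundary terms from partial summation vanish thanks to the compact support of $\widehat\phi$, which also truncates the $t$-integral to $t\leq X^\sigma$. A careful bookkeeping of these contributions, combined with the errors $O(X^{-3/4+\varepsilon})$ and $O(X^\varepsilon(\log Z)^3 Z^{-1})$ inherited directly from \eqref{equation thing to prove in lemma Poisson}, yields the stated bound $O\big(X^{-3/4+\varepsilon} + X^\varepsilon(\log Z)^3 Z^{-1} + ZX^{\sigma/2-1+\varepsilon} + X^{-1/2+\varepsilon}Z^\varepsilon\big)$.
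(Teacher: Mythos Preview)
Your proposal is correct and follows essentially the same route as the paper: vanish $k=0$, isolate the square-$k$ main term via partial summation against the GRH prime count (Lemma~\ref{lem2.7}), reparametrise $k=n^2$, and finish with the even/odd parity split $\sum_{n\neq 0}(-1)^{N(n)}f(n)=2\sum_{n'\neq 0}f((1+i)n')-\sum_{n\neq 0}f(n)$ to produce $\tfrac12 I_{(1+i)l}(X)-I_l(X)$. The two organisational differences are that the paper (i) bounds the non-square $k$ contribution by citing \cite[\S3.5]{G&Zhao4} rather than re-running the same partial-summation argument, and (ii) explicitly strips the coprimality constraints $(\varpi,l)=1$, $(\varpi,k)=1$ before invoking the prime-number theorem, whereas you absorb these into the $\log N(m)$ factor of Lemma~\ref{lem2.7}; both treatments lead to the same $ZX^{\sigma/2-1+\varepsilon}$ and $X^{-1/2+\varepsilon}Z^{\varepsilon}$ terms once the bookkeeping you sketch is carried out.
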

\begin{proof}
   Note that as in \cite[Section 3.4]{G&Zhao4} that the inner sum in \eqref{equation thing to prove in lemma Poisson} corresponding to $k=0$ is zero. It also follows from the treatment of \cite[Section 3.5]{G&Zhao4} by setting $U=1$ and dividing the estimation obtained in \cite[(3.9)]{G&Zhao4} by $X$ (since our definition of $S_{\text{\emph{odd}}}$ differs from $S_M(X,Y; \hat{\phi}, \Phi)$ defined in \cite{G&Zhao4} by an extra factor $W^{-1}(X)$) that the contribution of $k \neq \square$ ($k$ is not a square) to the expression for $S_{\text{\emph{odd}}}$ given in  \eqref{equation thing to prove in lemma Poisson} is
\begin{align*}
   \ll ZX^{\sigma/2-1+\varepsilon}.
\end{align*}

      We are left to consider the contribution from $k=\square$ ($k$ is a square), $k \neq 0$ to the expression for $S_{\text{\emph{odd}}}$ given in  \eqref{equation thing to prove in lemma Poisson}. For this, we make a change of variables $k \mapsto k^2$ while noting that $k^2_1=k^2_2$ if and only if $k_1 = \pm k_2$ and deduce that
\begin{align*}
S_{\text{odd}} = -\frac X{2 \mathcal{L}  W(X)  } & \sum_{\substack{N(l) \leq Z \\ l \equiv 1 \bmod {(1+i)^3}}} \frac {\mu_{[i]}(l)}{N(l^2)} \sum_{\substack {(\varpi , l )=1 \\ \varpi \equiv 1 \bmod {(1+i)^3} }} \frac {\log N(\varpi)}{N(\varpi)}\hat{\phi} \left( \frac {\log N(
   \varpi)}{\log X} \right)  \sum_{\substack {k \in
   \mz[i] , k \neq 0 \\ (k, \varpi)=1}}(-1)^{N(k)}\widetilde{w}\left(N(k)\sqrt{\frac {X}{2N(l^2\varpi)}}\right) \\
   &+O\left(X^{-3/4+\varepsilon}+X^{\varepsilon}(\log Z)^3Z^{-1}+ZX^{\sigma/2-1+\varepsilon} \right).
\end{align*}

    In view of the rapid decay property of $\widetilde{w}$ implied by \eqref{bounds},  we now remove the condition that $(\varpi, l)=1$ at the cost of an error
\begin{align*}
 \ll & \frac 1{\mathcal{L}}\sum_{\substack{N(l) \leq Z \\ l \equiv 1 \bmod {(1+i)^3}}} \frac {1}{N(l^2)} \sum_{\substack {\varpi | l \\ \varpi \equiv 1 \bmod {(1+i)^3} }} \frac {\log N(\varpi)}{N(\varpi)}\sum_{\substack {k \in
   \mz[i] ,\ k \neq 0 \\ N(k) \leq \sqrt{2N(l^2\varpi)/X}}}1 \\
 \ll & X^{-1/2}\sum_{\substack{N(l) \leq Z \\ l \equiv 1 \bmod {(1+i)^3}}} \frac {1}{N(l)} \sum_{\substack {\varpi | l \\ \varpi \equiv 1 \bmod {(1+i)^3} }} \frac {\log N(\varpi)}{\sqrt{N(\varpi)}} \ll X^{-1/2}Z^{\varepsilon},
\end{align*}
  where we use the well-known fact that for $N(l) \geq 3$,  the number, $\omega(l)$, of distinct primes in $\mz[i]$ dividing $l$  can be bounded as
\begin{align*}
   \omega(l) \ll \frac {\log N(l)}{\log \log N(l)},
\end{align*}

   One can show similarly that removing the condition $(k,\varpi)=1$ introduces an error of size $\ll X^{-1/2}Z^{\varepsilon}$.  We thus derive, using \eqref{lem2.7eq}, that
\begin{align*}
\begin{split}
S_{\text{odd}} =& -\frac X{2 \mathcal{L} W(X)}\sum_{\substack{N(l) \leq Z \\ l \equiv 1 \bmod {(1+i)^3}}} \frac {\mu_{[i]}(l)}{N(l^2)} \sum_{\substack {\varpi \equiv 1 \bmod {(1+i)^3} }} \frac {\log N(\varpi)}{N(\varpi)}\hat{\phi} \left( \frac {\log N(
   \varpi)}{\log X} \right) \\
   & \hspace*{3cm} \times \sum_{\substack {k \in
   \mz[i] \\ k \neq 0}}(-1)^{N(k)}\widetilde{w}\left(N(k)\sqrt{\frac {X}{2N(l^2\varpi)}}\right)\\
   &+O\left(X^{-3/4+\varepsilon}+X^{\varepsilon}(\log Z)^3Z^{-1}+ZX^{\sigma/2-1+\varepsilon}+X^{-1/2+\varepsilon}Z^{\varepsilon} \right) \\
   =& -\frac {X}{2 \mathcal{L}  W(X)  }\sum_{\substack{N(l) \leq Z \\ l \equiv 1 \bmod {(1+i)^3}}} \frac {\mu_{[i]}(l)}{N(l^2)} \sum_{\substack {k \in
   \mz[i] \\ k \neq 0}}(-1)^{N(k)} \\
   & \hspace*{3cm} \times \int\limits^{\infty}_1\frac 1y \hat{\phi} \left( \frac {\log y}{\mathcal{L}} \right)\widetilde{w}\left(N(k)\sqrt{\frac {X}{2N(l^2)y}}\right)\dif \big(y+ O\big( y^{1/2} \log^{2} (2y)\big ) \big ) \\
   & +O\left(X^{-3/4+\varepsilon}+X^{\varepsilon}(\log Z)^3Z^{-1}+ZX^{\sigma/2-1+\varepsilon}+X^{-1/2+\varepsilon}Z^{\varepsilon} \right) \\
   =& S_1+S_2+O\left(X^{-3/4+\varepsilon}+X^{\varepsilon}(\log Z)^3Z^{-1}+ZX^{\sigma/2-1+\varepsilon}+X^{-1/2+\varepsilon}Z^{\varepsilon} \right),
\end{split}
\end{align*}
   where
\[ S_1= -\frac{X}{2 \mathcal{L}W(X)} \sum_{\substack{N(l) \leq Z \\ l \equiv 1 \bmod {(1+i)^3}}} \frac {\mu_{[i]}(l)}{N(l^2)}  \sum_{\substack {k \in
   \mz[i] \\ k \neq 0}}(-1)^{N(k)}\int\limits^{\infty}_1\frac 1y  \hat{\phi} \left( \frac {\log y}{\mathcal{L}} \right)\widetilde{w}\left(N(k)\sqrt{\frac {X}{2N(l^2)y}}\right)\dif y, \]
   and
\[ S_2  = -\frac{X}{2 \mathcal{L}W(X)}\sum_{\substack{N(l) \leq Z \\ l \equiv 1 \bmod {(1+i)^3}}} \frac {\mu_{[i]}(l)}{N(l^2)} \sum_{\substack {k \in
   \mz[i] \\ k \neq 0}}(-1)^{N(k)}\int\limits^{\infty}_1 \frac 1y \hat{\phi}\left( \frac {\log y}{\mathcal{L}} \right)\widetilde{w}\left(N(k)\sqrt{\frac {X}{2N(l^2)y}}\right)\dif \big( O\big( y^{1/2} \log^{2} (2y)\big ) \big ). \]

    Note that
\begin{align*}
S_2  \ll& \frac 1{\mathcal{L}}  \sum_{\substack{N(l) \leq Z \\ l \equiv 1 \bmod {(1+i)^3}}} \frac {1}{N(l^2)}  \sum_{\substack {k \in
   \mz[i] \\ k \neq 0}}\int\limits^{\infty}_1  y^{1/2} \log^{2} (2y) \frac{\dif}{\dif y} \left(\frac 1y \hat{\phi} \left( \frac {\log y}{\mathcal{L}} \right)\widetilde{w}\left(N(k)\sqrt{\frac {X}{2N(l^2)y}}\right) \right ) \dif y \ll S_{2,1}+S_{2,2},
\end{align*}
  where
\[ S_{2,1} = \frac 1{\mathcal{L}} \sum_{\substack{N(l) \leq Z \\ l \equiv 1 \bmod {(1+i)^3}}} \frac {1}{N(l^2)}  \sum_{\substack {k \in
   \mz[i] \\ k \neq 0}}\Bigg | \widetilde{w}\left(N(k)\sqrt{\frac {X}{2N(l^2)}}\right)\Bigg | \]
and
\begin{align*}
 S_{2,2} =  \frac 1{\mathcal{L}} & \sum_{\substack{N(l) \leq Z \\ l \equiv 1 \bmod {(1+i)^3}}}  \frac {1}{N(l^2)} \int\limits^{\infty}_1 \sum_{\substack {k \in
   \mz[i] \\ k \neq 0}} y^{1/2} \log^{2} (2y)\Bigg( \Bigg |\Bigg( \frac 1{y^2\mathcal{L}}\hat{\phi}' \left( \frac {\log y}{\mathcal{L}} \right)\widetilde{w}\left(N(k)\sqrt{\frac {X}{2N(l^2)y}}\right)\Bigg | \\
   & +\Bigg |\frac 1{y^2} \hat{\phi} \left( \frac {\log y}{\mathcal{L}} \right)\widetilde{w}\left(N(k)\sqrt{\frac {X}{2N(l^2)y}}\right)\Bigg | +\Bigg | \hat{\phi} \left( \frac {\log y}{\mathcal{L}} \right)N(k)\sqrt{\frac {X}{8N(l^2)}}y^{-5/2}\widetilde{w}'\left(N(k)\sqrt{\frac {X}{2N(l^2)y}}\right)\Bigg )\Bigg | \Bigg ) \dif y .
\end{align*}

    Using \eqref{bounds}, we deduce that
\[ \sum_{\substack {k \in
   \mz[i] \\ k \neq 0}}\Bigg |\widetilde{w}\left(N(k)\sqrt{\frac {X}{2N(l^2)y}}\right)\Bigg | \ll \sqrt {\frac {N(l^2)y}{X}} \quad \mbox{and} \quad \sum_{\substack {k \in
   \mz[i] \\ k \neq 0}}\Bigg | N(k)\widetilde{w}'\left(N(k)\sqrt{\frac {X}{2N(l^2)y}}\right)\Bigg |   \ll \frac {N(l^2)y}{X}. \]

 Thus, it follows that
\begin{align*}
 S_{2,2}  \ll&  \frac 1{\mathcal{L}} \sum_{\substack{N(l) \leq Z \\ l \equiv 1 \bmod {(1+i)^3}}} \frac {1}{N(l^2)}\int\limits^{\infty}_1  y^{1/2} \log^{2} (2y)  \\
  & \hspace*{1cm} \times \left( \frac 1{y^2\mathcal{L}}\Bigg |\hat{\phi} '\left( \frac {\log y}{\mathcal{L}} \right)\Bigg |\sqrt {\frac {N(l^2)y}{X}}+\frac 1{y^2}\Bigg | \hat{\phi}\left( \frac {\log y}{\mathcal{L}} \right)\sqrt {\frac {N(l^2)y}{X}}\Bigg |+ \Bigg |\hat{\phi} \left( \frac {\log y}{\mathcal{L}} \right)\Bigg | \sqrt{\frac {X}{8N(l^2)}}y^{-5/2}\frac {N(l^2)y}{X}\right )\dif y  \\
  \ll & X^{-1/2}Z^{\varepsilon}\int\limits^{\infty}_1   \log^{2} (2y)\left ( \Bigg | \hat{\phi}'\left( \frac {\log y}{\mathcal{L}} \right)\Bigg |+ \Bigg |\hat{\phi} \left( \frac {\log y}{\mathcal{L}} \right)\Bigg | \right ) \dif \left ( \frac {\log y}{\mathcal{L}} \right ) \ll X^{-1/2+\varepsilon}Z^{\varepsilon},
\end{align*}
   where the last estimation above follows by using a change of variable  $u = \log y/\mathcal{L}$ to evaluate the proceeding integral and noting that the integrand has compact support. Similarly, we have that $S_{2,1}  \ll  X^{-1/2+\varepsilon}Z^{\varepsilon}$ so that
\begin{align*}
 S_{2}  \ll&  X^{-1/2+\varepsilon}Z^{\varepsilon}.
\end{align*}

   It follows from this that $S_2$ contributes to the error term in \eqref{Soddformula}. It remains to evaluate $S_1$ and applying exactly the same change of variable as above now leads to
\begin{align}
\label{S1}
 S_{1} = -\frac X{ 2W(X)  } \sum_{\substack{N(l) \leq Z \\ l \equiv 1 \bmod {(1+i)^3}}} \frac {\mu_{[i]}(l)}{N(l^2)}
\int\limits_{0}^{\infty} \widehat \phi( u )   \sum_{\substack {k \in
   \mz[i] , k \neq 0}}(-1)^{N(k)} \widetilde{w}\left(N(k)\sqrt{\frac {X^{1-u}}{2N(l^2)}}\right) \dif u.
\end{align}

   We note that, for any $l \in \mathcal O_K$, we have
\begin{align*}
  \sum_{\substack {k \in
   \mz[i] \\ k \neq 0}}(-1)^{N(k)} \widetilde{w}\left(N(k)\sqrt{\frac {X^{1-u}}{2N(l^2)}}\right) =& \sum_{\substack {k \in
   \mz[i] \\ k \neq 0 \\ 1+i|k}}\widetilde{w}\left(N(k)\sqrt{\frac {X^{1-u}}{2N(l^2)}}\right)-\sum_{\substack {k \in
   \mz[i] \\ k \neq 0 \\ (1+i,k)=1}}\widetilde{w}\left(N(k)\sqrt{\frac {X^{1-u}}{2N(l^2)}}\right) \\
=& 2\sum_{\substack {k \in
   \mz[i] \\ k \neq 0 \\ 1+i|k}}\widetilde{w}\left(N(k)\sqrt{\frac {X^{1-u}}{2N(l^2)}}\right)-\sum_{\substack {k \in
   \mz[i] \\ k \neq 0}}\widetilde{w}\left(N(k)\sqrt{\frac {X^{1-u}}{2N(l^2)}}\right) \\
= & 2\sum_{\substack {k \in
   \mz[i] \\ k \neq 0 }}\widetilde{w}\left(2 N(k)\sqrt{\frac {X^{1-u}}{2N(l^2)}}\right)-\sum_{\substack {k \in
   \mz[i] \\ k \neq 0}}\widetilde{w}\left(N(k)\sqrt{\frac {X^{1-u}}{2N(l^2)}}\right).
\end{align*}

  Applying the above in \eqref{S1}, we derive that
\begin{align*}
S_{1} = \frac X{  W(X)  } \sum_{\substack{N(l) \leq Z \\ l \equiv 1 \bmod {(1+i)^3}}} \frac {\mu_{[i]}(l)}{N(l^2)} \Big (\frac 12 I_{(1+i)l}(X)-I_l(X) \Big ).
\end{align*}

  The above gives precisely the main term in \eqref{Soddformula} for $S_{\text{odd}}$ and this completes the proof.
\end{proof}

\subsection{Estimation of $S_{\text{odd}}$: small support}

  In this section, we apply Lemma \ref{lemma:small s} to show that there is no new lower order terms in powers of $\mathcal{L}^{-1}$ when $\sigma=\text{sup}(\text{supp } \widehat \phi)< 1$. We generalize \cite[Proposition 3.1]{FPS} and state our result in the following
\begin{proposition}
\label{proposition GRH small support}
 Suppose that GRH is true and that $\sigma=\sup(\text{supp } \widehat \phi)<1$.  Then we have for any $\varepsilon>0$,
$$  S_{\text{odd}}  \ll X^{\sigma/4 - 1/2+\varepsilon} + X^{3\sigma/4-3/4+\varepsilon}. $$
\end{proposition}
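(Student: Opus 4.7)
The plan is to apply Lemma~\ref{lemma:small s} with $Z = X^{1/2 - \sigma/4}$. This choice equates the two dominant error terms $X^{\varepsilon}(\log Z)^3 Z^{-1}$ and $Z X^{\sigma/2 - 1 + \varepsilon}$ there, each becoming $\ll X^{\sigma/4 - 1/2 + \varepsilon}$; the remaining errors $X^{-3/4 + \varepsilon}$ and $X^{-1/2 + \varepsilon} Z^\varepsilon$ are comfortably absorbed. It then suffices to prove that the main sum
\[
M := \frac{X}{W(X)} \sum_{\substack{N(l) \le Z \\ l \equiv 1 \bmod (1+i)^3}} \frac{\mu_{[i]}(l)}{N(l^2)} \bigl( \tfrac{1}{2} I_{(1+i)l}(X) - I_l(X) \bigr)
\]
satisfies $M \ll X^{3\sigma/4 - 3/4 + \varepsilon}$.

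For the pointwise estimate, write $T_u := \sqrt{X^{1-u}/(2N(l^2))}$ and apply \eqref{bounds} to obtain $\bigl|\sum_{k \ne 0} \widetilde{w}(2N(k) T_u)\bigr| \ll_A T_u^{-A}$ when $T_u \ge 1$, and $\ll T_u^{-1}$ otherwise. For $N(l^2) \le X^{1-\sigma}$ the former holds throughout $u \in [0, \sigma]$, giving $|\tfrac{1}{2} I_{(1+i)l}(X) - I_l(X)| \ll_A (N(l^2)/X^{1-\sigma})^{A/2} / \LL$. For $N(l^2) > X^{1-\sigma}$, splitting the $u$-integral at the transition $T_u = 1$ yields $|\tfrac{1}{2} I_{(1+i)l}(X) - I_l(X)| \ll N(l) X^{(\sigma - 1)/2} / \LL$. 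These unsigned bounds by themselves only produce $|M| \ll X^{\sigma/2 - 1/2 + \varepsilon}$, which is weaker than the target.

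The sharpening comes from M\"obius cancellation. Lemma~\ref{lem2.7} applied to the principal character, together with GRH for $\zeta_K$, gives $\mathcal{M}(Y) := \sum_{N(l) \le Y} \mu_{[i]}(l) \ll Y^{1/2 + \varepsilon}$. I would treat the two $l$-ranges separately and apply partial summation to the main sum using the pointwise bounds above combined with this bound on $\mathcal{M}(Y)$. The resulting integral in each range is dominated at the critical scale $N(l) \sim X^{(1-\sigma)/2}$ (the boundary between the two regimes), and in both cases produces a contribution of order $X^{-3(1-\sigma)/4 + \varepsilon} = X^{3\sigma/4 - 3/4 + \varepsilon}$. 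Combining with the errors from Lemma~\ref{lemma:small s} then gives the claim.

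The main obstacle is arranging the partial summation so that the GRH-strength bound on $\mathcal{M}(Y)$ is deployed exactly at the critical scale $Y \sim X^{(1-\sigma)/2}$, where the pointwise decay of $\tfrac{1}{2} I_{(1+i)l}(X) - I_l(X)$ transitions from rapidly decaying to merely $O(1)$. One must also verify that the boundary contribution at $N(l) = Z$ is controlled: with $Z = X^{1/2 - \sigma/4}$ the boundary term is bounded by $Z^{-1/2 + \varepsilon} X^{-(1-\sigma)/2} \ll X^{5\sigma/8 - 3/4 + \varepsilon}$, which is absorbed into $X^{3\sigma/4 - 3/4 + \varepsilon}$ since $5\sigma/8 \le 6\sigma/8 = 3\sigma/4$.
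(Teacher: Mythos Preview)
Your overall strategy---apply Lemma~\ref{lemma:small s} with $Z = X^{1/2-\sigma/4}$ and exploit GRH-strength M\"obius cancellation to bound the main sum $M$---is exactly what the paper does. The execution differs slightly: rather than applying partial summation to the aggregate quantity $\tfrac{1}{2}I_{(1+i)l}(X) - I_l(X)$ via $\mathcal{M}(Y) = \sum_{N(l)\le Y}\mu_{[i]}(l)$, the paper introduces the weighted partial sum $\Phi(t) = \sum_{N(l)\le t}\mu_{[i]}(l)/N(l)^2 = \tfrac{4}{3\zeta_K(2)} + O(t^{-3/2+\varepsilon})$ and performs the Stieltjes integration \emph{for each fixed $k$ and $u$ separately} against the single function $t \mapsto \widetilde{w}\bigl(N(k)\sqrt{X^{1-u}/2}\,/t\bigr)$. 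This sidesteps your stated obstacle: the only derivative that enters is $\widetilde{w}'$, controlled directly by~\eqref{bounds}, and the constant part of $\Phi$ contributes nothing to $\dif\Phi$. After summing over $k$ and integrating over $u$ one obtains $\sum_l \mu_{[i]}(l)N(l)^{-2} I_l(X) \ll X^{(\sigma-1)/2}Z^{-1/2+\varepsilon} + X^{3\sigma/4-3/4+\varepsilon}$, which combined with the Lemma~\ref{lemma:small s} errors and the chosen $Z$ gives the result. Your route would also reach the goal, but it requires bounding $\partial_t\bigl(t^{-2}I_l(X)\bigr)$ for the partial summation, which you leave unverified.

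One small correction: Lemma~\ref{lem2.7} concerns prime sums and does not furnish $\mathcal{M}(Y)\ll Y^{1/2+\varepsilon}$; that bound is a separate standard consequence of GRH for $\zeta_K(s)$ (e.g.\ via Perron's formula for $1/\zeta_K(s)$), and the paper simply quotes it without attributing it to Lemma~\ref{lem2.7}.
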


\begin{proof}
 We let
\begin{align}
\label{Phi}
  \Phi(X)= \sum_{\substack{N(l) \leq X \\ l \equiv 1 \bmod {(1+i)^3} }} \frac{\mu_{[i]}(l)}{N(l)^2}=  \frac 4{3\zeta_K(2)} +O\big(X^{-3/2 + \varepsilon}\big).
\end{align}
   where the last equality above follows from the observation that under GRH we have
   $$\sum_{\substack{N(l) \leq X \\ (l, 1+i)=1 }} \mu_{[i]}(l) \ll X^{1/2+\varepsilon}.$$

We then deduce that for $0\leq u \leq 1$,
\begin{align*}
 \sum_{\substack{N(l) \leq Z \\ l \equiv 1 \bmod {(1+i)^3}}} & \frac {\mu_{[i]}(l)}{N(l^2)}\widetilde{w}\left(N(k)\sqrt{\frac {X^{1-u}}{2N(l^2)}}\right)  = \int\limits_{0^+}^Z \widetilde{w}\left(\frac {N(k)}{t}\sqrt{\frac {X^{1-u}}{2}}\right) \dif \Phi(t) \\
 & =\int\limits_{0^+}^Z \widetilde{w}\left(\frac {N(k)}{t}\sqrt{\frac {X^{1-u}}{2}}\right) \dif \Big(\frac 4{3\zeta_K(2)}+O\big(t^{-3/2 + \epsilon}\big)\Big ) \\
 & = \widetilde{w}\left(\frac {N(k)}{Z}\sqrt{\frac {X^{1-u}}{2}}\right) O\big(Z^{-3/2 + \varepsilon}\big) + N(k)\sqrt{\frac {X^{1-u}}{2}}\int\limits_{0^+}^Z \widetilde{w}'\left(\frac {N(k)}{t}\sqrt{\frac {X^{1-u}}{2}}\right) O \big(t^{-3/2 + \varepsilon}\big)\frac{ \dif t}{t^2} \\
& \ll Z^{-3/2 +\varepsilon} \bigg|\widetilde{w}\left(\frac {N(k)}{Z}\sqrt{\frac {X^{1-u}}{2}}\right)\bigg|+N(k)X^{(1-u)/2}\int\limits_{0^+}^Z  \bigg|\widetilde{w}'\left(\frac {N(k)}{t}\sqrt{\frac {X^{1-u}}{2}}\right) \bigg| \frac{\dif t}{t^{7/2-\varepsilon}}.
\end{align*}
Note that the part of the last integral for $ t \in (0,X^{(1-u)/2-\varepsilon}]$ is $O\big((N(k)X)^{-A}\big)$ for any $A\geq 1$, by the rapid decay of $\widetilde w'$. Summing over $k$ and integrating over $u$, we obtain that
\begin{align*}
\sum_{\substack{N(l) \leq Z \\ l \equiv 1 \bmod {(1+i)^3}}} \frac {\mu_{[i]}(l)}{N(l^2)}I_l(X) & \ll \int\limits_{0}^{\infty} \big|\widehat \phi( u )\big|  \sum_{\substack{k \in \mz[i] \\ k \neq 0}}  \Bigg(Z^{-3/2 +\varepsilon} \bigg|\widetilde{w}\left(\frac {N(k)}{Z}\sqrt{\frac {X^{1-u}}{2}}\right)\bigg|\\
&\hspace{2cm}+N(k)X^{(1-u)/2}\int\limits_{0^+}^Z  \bigg|\widetilde{w}'\left(\frac {N(k)}{t}\sqrt{\frac {X^{1-u}}{2}}\right) \bigg| \frac{\dif t}{t^{7/2-\varepsilon}} \Bigg)\,\dif u +X^{-1} \\
& \ll \int\limits_{0}^{\infty} \big|\widehat \phi( u )\big|  \bigg(Z^{-3/2 +\varepsilon} \frac{Z}{X^{(1-u)/2}}+\frac 1{X^{(1-u)/2}}\int\limits_{X^{(1-u)/2-\varepsilon}}^Z   \frac{\dif t}{t^{3/2-\varepsilon}} \bigg)\, \dif u + X^{-1}\\
& \ll \frac{X^{(\sigma-1)/2}}{Z^{1/2 -\varepsilon}} +  X^{3\sigma/4 -3/4+\varepsilon}.
\end{align*}
 Similarly, we have
\begin{align*}
\sum_{\substack{N(l) \leq Z \\ l \equiv 1 \bmod {(1+i)^3}}} \frac {\mu_{[i]}(l)}{N(l^2)}I_{(1+i)l}(X) \ll \frac{X^{(\sigma-1)/2}}{Z^{1/2 -\varepsilon}} +  X^{3\sigma/4 -3/4+\varepsilon}.
\end{align*}

  Hence, it follows from Lemma \ref{lemma:small s} that for $Z\leq X^2$,
$$ S_{\text{odd}} \ll \frac{X^{(\sigma-1)/2}}{Z^{1/2 -\varepsilon}}+X^{3\sigma/4 -3/4+\varepsilon} +X^{-3/4+\varepsilon}+X^{\varepsilon}Z^{-1}+ZX^{\sigma/2-1+\varepsilon}+X^{-1/2+\varepsilon}Z^{\varepsilon}.$$
The result follows by taking $Z=X^{1/2-\sigma/4}$.
\end{proof}

\subsection{Estimation of $S_{\text{odd}}$: extended support}
\label{section extended support}

   In this section, we analyze the lower order terms of $S_{\text{odd}}$ when $\sigma=\sup(\text{supp } \widehat \phi) \geq 1$.
\begin{lemma}
\label{lemma I_s(X)}
Suppose that $\sigma=\text{sup}(\text{supp } \widehat \phi)<\infty$. Then concerning the function $I_l(X)$ defined in \eqref{Il2}, we have
\begin{equation} \label{equation estimate I_s(X)}
\begin{split}
I_l(X)=& -\widetilde w (0)\int\limits_{1}^{\infty} \widehat \phi(u)\,du +\frac {\widetilde{g}(0)}{\LL} \int\limits^{\infty}_0\phi( 1+\tau/\LL ) e^{\tau/2}N(l) \, \dif \tau \\
 & +\frac 1{\LL} \int\limits_{0}^{\infty} \widehat \phi( 1+ \tau /\LL )  \bigg(  e^{\tau/2}N(l)\sum_{\substack{ j \in
   \mz[i] \\ j \neq 0}}\widetilde{g}\left(\sqrt {N(j) e^{\tau/2}N(l)}\right) \, \dif \tau  \\
   & +\frac 1{\LL}   \int\limits_0^{\infty} \widehat \phi( 1-\tau/\LL )   \sum_{\substack {k \in \mz[i] \\ k \neq 0}}g\left(N(k)\sqrt{\frac {e^{\tau}}{N(l^2)}}\right) \dif \tau+O\big(N(l)X^{-1/2}\big),
   \end{split}
   \end{equation}
   and
   \begin{equation} \label{equation estimate I_(1+i)s(X)}
   \begin{split}
I_{(1+i)l}(X)=& -\widetilde w (0)\int\limits_{1}^{\infty} \widehat \phi(u)\,du +\frac {2\widetilde{g}(0)}{\LL} \int\limits^{\infty}_0\phi( 1+ \tau/\LL ) e^{\tau/2}N(l) \, \dif \tau \\
 & +\frac 1{\LL} \int\limits_{0}^{\infty} \widehat \phi( 1+ \tau/\LL )  \bigg(  2e^{\tau/2}N(l)\sum_{\substack{ j \in
   \mz[i] \\ j \neq 0}}\widetilde{g}\left(2\sqrt {N(j) e^{\tau/2}N(l)}\right) \, \dif \tau  \\
   & +\frac 1{\LL}   \int\limits_0^{\infty} \widehat \phi( 1- \tau/\LL )   \sum_{\substack {k \in \mz[i] \\ k \neq 0}}g\left(\frac {N(k)}{2}\sqrt{\frac {e^{\tau}}{N(l^2)}}\right) \dif \tau,
\end{split}
\end{equation}
  where  $g(y)$ is given as in \eqref{g}.
\end{lemma}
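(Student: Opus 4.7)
The plan is to establish both identities by applying the Poisson summation formula \eqref{tripoi} to the inner sums over $k$ on the range where this transformation is efficient, while keeping the inner sums untouched on the complementary range.

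First I rewrite the integrands using $g(y)=\widetilde w(\sqrt 2 y)$. Setting $Y=Y(u):=\sqrt{X^{1-u}/N(l^2)}$, a short computation (using $N(((1+i)l)^2)=4N(l^2)$ in the second case) gives
\begin{align*}
I_l(X)=\int_0^\infty \widehat \phi(u)\sum_{k\neq 0} g(N(k)Y)\,\dif u, \qquad I_{(1+i)l}(X)=\int_0^\infty \widehat \phi(u)\sum_{k\neq 0}g(N(k)Y/2)\,\dif u.
\end{align*}
I then split each $u$-integral at $u=1$. On $(1,\infty)$ the parameter $Y\leq 1/N(l)$ is small, so Poisson efficiently transforms the inner sum; on $(0,1)$ the parameter $Y\geq X^{1/2}/N(l)$ is large, so the sum is directly controlled by the rapid decay of $g$.

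For the contribution from $u\in(1,\infty)$ I apply Poisson summation \eqref{tripoi} with scale $1/Y$ (respectively $2/Y$ for $I_{(1+i)l}$) to the full sum over $k\in\mathcal{O}_K$; separating the $k=0$ term and using $g_1(y)=\widetilde g(\sqrt y)$ together with $g(0)=\widetilde w(0)$ and $g_1(0)=\widetilde g(0)$ yields
\begin{align*}
\sum_{k\neq 0}g(N(k)Y)=-\widetilde w(0)+\frac{\widetilde g(0)}{Y}+\frac{1}{Y}\sum_{k\neq 0}\widetilde g\!\left(\sqrt{N(k)/Y}\right),
\end{align*}
and an analogous identity for the $Y/2$ scale, whose extra factor of $2/Y$ accounts for the doubled coefficients in the second and third terms of \eqref{equation estimate I_(1+i)s(X)}. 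The change of variables $u=1+\tau/\LL$, under which $1/Y=N(l)e^{\tau/2}$ and $X^{(u-1)/2}=e^{\tau/2}$, then rearranges these three pieces into the first three lines of each of \eqref{equation estimate I_s(X)} and \eqref{equation estimate I_(1+i)s(X)}.

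For the contribution from $u\in(0,1)$ I substitute $u=1-\tau/\LL$ without applying Poisson, obtaining the fourth line of each expansion but integrated over $\tau\in(0,\LL)$ rather than $(0,\infty)$. I then extend the upper limit to $\infty$; by compact support of $\widehat\phi$, the discarded tail is actually supported on the finite range $(\LL,(1+\sigma)\LL)$. The main obstacle is to bound this tail by the claimed $O(N(l)X^{-1/2})$ error, and this is where care is required: on the tail one has $Y=e^{\tau/2}/N(l)\geq X^{1/2}/N(l)$, and one may estimate
\begin{align*}
\sum_{k\neq 0}\bigl|g(N(k)Y)\bigr|\ll \frac{1}{Y}\int_Y^\infty |g(s)|\,\dif s\ll Y^{-A}
\end{align*}
for any $A>0$ by Schwartz decay of $g$ (essentially the annulus estimate already used in the proof of Proposition \ref{proposition GRH small support}); choosing $A$ large enough and integrating in $\tau$ accommodates the claimed error bound. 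An identical argument handles $I_{(1+i)l}(X)$, completing the proof.
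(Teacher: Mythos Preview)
Your argument is correct and follows essentially the same route as the paper: the paper first extends the $u$-integral from $[0,\infty)$ to $\mathbb{R}$ (the $u\le 0$ piece producing the $O(N(l)X^{-1/2})$ error), then substitutes $\tau=\mathcal{L}(u-1)$, splits at $\tau=0$, and applies Poisson summation on the $\tau\ge 0$ half---exactly the operations you perform, only in a slightly different order. One small point on the tail estimate: the phrase ``choosing $A$ large enough'' is not what you want when $N(l)>X^{1/2}$, since then $Y<1$ on part of the tail and increasing $A$ makes $Y^{-A}$ worse; simply taking $A=1$ (equivalently, stopping at your intermediate bound $\ll 1/Y$) already gives the uniform $O(N(l)X^{-1/2})$.
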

\begin{proof}
   We first extend the integral in \eqref{Il2} to $\mr$ and make the substitution $\tau = \mathcal{L}(u-1)$ to obtain that
\begin{align}
\label{Iint}
 I_l(X)=\frac 1{\LL}   \int\limits_{-\infty}^{\infty} \widehat \phi \left( 1+\tau/\LL \right)   \sum_{\substack {k \in
   \mz[i] \\ k \neq 0}} \widetilde{w}\left(2N(k)\sqrt{\frac {e^{-\tau}}{2N(l^2)}}\right) \dif \tau +O(N(l)X^{-1/2}),
\end{align}
    since for $u \leq 0$, we have
\begin{align*}
  \int\limits^{0}_{-\infty} \widehat \phi( u )   \sum_{\substack {k \in
   \mz[i] \\ k \neq 0}}\widetilde{w}\left(2N(k)\sqrt{\frac {X^{1-u}}{2N(l^2)}}\right) \dif u \ll \int\limits^{0}_{-\infty} \widehat \phi( u )  \sqrt{\frac {N(l^2)}{X^{1-u}}} \dif u \ll N(l)X^{-1/2}.
\end{align*}

We then break the integral in \eqref{Iint} into integrals over $(-\infty,0]$ and $[0,\infty)$ and denote them respectively by $I_l^-(X)$ and $I_l^+(X)$.  By applying the Poisson summation formula \eqref{tripoi} given in Lemma \ref{Poissonsum}, we see that
\begin{align*}
I_l^+(X)&= \frac 1{\LL}\int\limits_{0}^{\infty} \widehat \phi( 1+\tau/\LL  )  \bigg(-\widetilde{w}(0)  + \sum_{k \in \mz[i]} g\left(N(k)\sqrt{\frac {e^{-\tau}}{N(l^2)}}\right) \bigg)\, \dif \tau \\
&=  \frac 1{\LL} \int\limits_{0}^{\infty} \widehat \phi( 1+ \tau/\LL )  \bigg(-\widetilde{w}(0)  +  e^{\tau/2}N(l)\sum_{\substack{ j \in
   \mz[i] }}\widetilde{g}\left(\sqrt{N(j) e^{\tau/2}N(l)}\right) \, \dif \tau \\
&=  \frac 1{\LL} \int\limits_{0}^{\infty} \widehat \phi( 1+ \tau/\LL )  \bigg(-\widetilde{w}(0)  +  e^{\tau/2}N(l)\widetilde{g}(0)+ e^{\tau/2}N(l)\sum_{\substack{ j \in
   \mz[i] \\ j \neq 0}}\widetilde{g}\left(\sqrt{N(j) e^{\tau/2}N(l)}\right) \, \dif \tau.
\end{align*}

Moreover, substituting $\tau$ by $-\tau$ in $I_l^-(X)$, we obtain that
$$I^-_l(X)=\frac 1{\LL}   \int\limits_0^{\infty} \widehat \phi( 1- \tau/\LL )   \sum_{\substack {k \in
   \mz[i] \\ k \neq 0}}g \left(N(k)\sqrt{\frac {e^{\tau}}{N(l^2)}}\right) \dif \tau. $$

  Combining the above expressions for $I_l^-(X)$ and $I_l^+(X)$, we readily derive the expression for $I_l(X)$ in \eqref{equation estimate I_s(X)}. The expression for $I_{(1+i)l}(X)$ in \eqref{equation estimate I_(1+i)s(X)} can be similarly obtained via the expression of $I_{l}(X)$ with the function $g(y)$ being replaced by $g(y/2)$ and this completes the proof.
\end{proof}

 We define the functions
\[  h_1(x) = \frac{3\zeta_K(2)}{\pi \widehat w(0)}\sum_{\substack{l \equiv 1 \bmod {(1+i)^3}}}\frac {\mu_{[i]}(l)}{N(l)}\Big ( \widetilde{g}\left(\sqrt{2 N(l)x } \right)- \widetilde{g}\left(\sqrt{ N(l)x } \right) \Big ) , \]
  and
\[  h_2(x)= \frac{3\zeta_K(2)}{\pi \widehat w(0)}\sum_{\substack{ l \equiv 1 \bmod {(1+i)^3}}}\frac {\mu_{[i]}(l)}{N(l^2)}\Big (\frac 12 g\left(\frac {x}{2N(l)}\right)- g\left(\frac {x}{N(l)}\right)\Big ). \]

It is easy to see that $h_1(x)$ and $h_2(x)$ are smooth on $(0, \infty)$ and $[0, \infty)$, respectively. Moreover, we have the bounds $h_1(x)\ll x^{-A}$ for any $A\geq 1$ and $h_2(x)\ll x^{-3/2+\varepsilon}$ for any $\varepsilon >0$ under GRH. We point out here that the above notations as well as their bounds are inspired by the corresponding notations introduced on page 1206 of \cite{FPS}. \newline

  We now apply Lemma \ref{lemma I_s(X)} to derive the following generalization of \cite[Corollary 3.4]{FPS}.
\begin{lemma}
\label{LemSodd}
Suppose that GRH is true.  Then we have for $\sigma<2$,
\begin{align*}
 S_{\text{odd}}  =   \int\limits_{1}^{\infty} \widehat \phi(u)\, \dif u+ J(X)+O\big(X^{\sigma/6-1/3+\varepsilon}\big),
\end{align*}
   where
\begin{align}
\label{corollary Sodd in terms of J}
 J(X)  =    \frac 1{\LL} \int\limits_{0}^{\infty} \bigg( \widehat \phi( 1+ \tau/\LL  )   e^{\tau/2} \sum_{\substack{ j \in
   \mz[i] \\  j \neq 0}} h_1\big(N(j)e^{\tau/2}\big)  +\widehat \phi( 1- \tau/\LL  )   \sum_{\substack {k \in
   \mz[i] \\ k \neq 0}} h_2\big(N(k)e^{\tau/2}\big)\bigg)\, \dif \tau,
\end{align}
\end{lemma}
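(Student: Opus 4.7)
The strategy is to substitute the explicit expressions for $I_l(X)$ and $I_{(1+i)l}(X)$ provided by Lemma \ref{lemma I_s(X)} into the formula for $S_{\text{odd}}$ given in Lemma \ref{lemma:small s}, and then reorganize the resulting terms into the advertised main term $\int_1^\infty \widehat\phi(u)\,\dif u$ plus $J(X)$ plus a controllable error. After substitution, the right-hand side decomposes into four pieces following the structure of Lemma \ref{lemma I_s(X)}: the $-\widetilde w(0)\int_1^{\infty}\widehat\phi$ terms, the $\widetilde g(0)$ terms, the $j\neq 0$ sums involving $\widetilde g$, and the $k\neq 0$ sums involving $g$. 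The $O(N(l)X^{-1/2})$ remainder in Lemma \ref{lemma I_s(X)}, once summed against $\mu_{[i]}(l)/N(l^2)$ over $N(l)\leq Z$, contributes $O(X^{-1/2}\log Z)$, which is harmless for our purposes.

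The $\widetilde g(0)$ contributions cancel exactly, since the coefficient of $\widetilde g(0)$ in the combination $\tfrac{1}{2}I_{(1+i)l}(X) - I_l(X)$ is $\tfrac{1}{2}\cdot 2 - 1 = 0$. For the constant pieces, the same combination yields $\tfrac{1}{2}\widetilde w(0)\int_1^\infty \widehat\phi(u)\,\dif u$; summing $\tfrac{X}{W(X)}\mu_{[i]}(l)/N(l^2)$ against this, invoking \eqref{Phi} to replace the partial sum by $\tfrac{4}{3\zeta_K(2)}$ up to $O(Z^{-3/2+\varepsilon})$, using $W(X) = \tfrac{\pi X\widehat w(0)}{3\zeta_K(2)} + O(X^{1/4+\varepsilon})$ from \eqref{W}, and exploiting the polar-coordinate identity $\widetilde w(0) = \tfrac{\pi}{2}\widehat w(0)$, one recovers exactly $\int_1^\infty \widehat\phi(u)\,\dif u$ up to an error of $O(X^{-3/4+\varepsilon}+Z^{-3/2+\varepsilon})$.

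For the $j\neq 0$ and $k\neq 0$ pieces, the outer $l$-summation weighted by $\tfrac{X}{W(X)}\mu_{[i]}(l)/N(l^2)$, once combined with the internal $N(l)$-factors (namely $e^{\tau/2}N(l)$ and the $\sqrt{N(l)}$ built into the arguments), reproduces exactly the defining series for $h_1(N(j)e^{\tau/2})$ and $h_2(N(k)e^{\tau/2})$, using again $\tfrac{X}{W(X)} = \tfrac{3\zeta_K(2)}{\pi\widehat w(0)} + O(X^{-3/4+\varepsilon})$. This identifies the remaining contribution with $J(X)$ as defined in \eqref{corollary Sodd in terms of J}, \emph{provided} one can extend the $l$-sum from $N(l)\leq Z$ to all primary $l$.

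The main obstacle is controlling this tail extension. For the portions of the defining sums of $h_1, h_2$ with $N(l) > Z$, one exploits two ingredients: the rapid decay $\widetilde g(y), g(y) \ll_{A}|y|^{-A}$ from \eqref{bounds} when the arguments $\sqrt{N(l)N(j)e^{\tau/2}}$ or $N(k)e^{\tau/2}/N(l)$ are large, and the $\mu_{[i]}(l)$-induced cancellation between the two differenced terms for small arguments. The support of $\widehat\phi(1\pm\tau/\LL)$ confines $e^{\tau/2} \ll X^{(1+\sigma)/2}$, so the tail is bounded by a power of $X$ times a negative power of $Z$. Balancing this against the errors from Lemma \ref{lemma:small s}---most notably $ZX^{\sigma/2-1+\varepsilon}$ and $Z^{-1}X^\varepsilon$---the choice $Z = X^{2/3-\sigma/3}$ equalizes the dominant error contributions and yields the claimed bound $O(X^{\sigma/6-1/3+\varepsilon})$.
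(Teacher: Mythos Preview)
Your proposal is correct and follows essentially the same route as the paper: substitute Lemma~\ref{lemma I_s(X)} into Lemma~\ref{lemma:small s}, observe the $\widetilde g(0)$ cancellation, use \eqref{W}, \eqref{Phi} and $\widetilde w(0)=\tfrac{\pi}{2}\widehat w(0)$ to extract $\int_1^\infty\widehat\phi(u)\,\dif u$, recognize the remaining $j\neq 0$ and $k\neq 0$ pieces as $J(X)$ after extending the $l$-sum, and set $Z=X^{2/3-\sigma/3}$.

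The one place where your write-up is less sharp than the paper is the tail extension. Your phrase ``$\mu_{[i]}(l)$-induced cancellation between the two differenced terms for small arguments'' is misleading: for the $h_2$ tail the difference $\tfrac12 g(x/2)-g(x)$ does \emph{not} vanish as $x\to 0$, so differencing buys nothing there. The paper instead bounds the two inner integrals crudely by $O(1)$ (the $\widetilde g$ piece) and $O(N(l))$ (the $g$ piece), and then uses partial summation against $\Phi(t)=\tfrac{4}{3\zeta_K(2)}+O(t^{-3/2+\varepsilon})$ from \eqref{Phi} to control $\sum_{N(l)>Z}\mu_{[i]}(l)N(l)^{-2}\cdot(\text{integral})$; the $\mu_{[i]}$-cancellation enters only through that error term in $\Phi$. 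This yields a tail of size $O(Z^{-1/2+\varepsilon})$ for the $g$ piece (and the smaller $O(Z^{-3/2+\varepsilon})$ for the $\widetilde g$ piece), which is precisely what balances against $ZX^{\sigma/2-1+\varepsilon}$ at $Z=X^{2/3-\sigma/3}$ to give the stated $O(X^{\sigma/6-1/3+\varepsilon})$. Your final bookkeeping and choice of $Z$ are correct; only the verbal justification of the tail needs this adjustment.
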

\begin{proof}
First note that by following the arguments that lead to estimation \cite[(2.13)]{G&Zhao4} there, we have
\begin{align}
\label{w0}
 \widetilde{w}(0) =   \frac {\pi }{2}\widehat{w}(0).
\end{align}

   It follows from \eqref{W}, \eqref{Phi} and \eqref{w0} that
\begin{align}
\label{phihat}
   \frac{X}{W(X)} \sum_{\substack{N(l) \leq Z  \\ l \equiv 1 \bmod {(1+i)^3}}}\frac {\mu_{[i]}(l)}{N(l^2)} \widetilde{w}(0) \int\limits_{1}^{\infty} \widehat \phi(u)\, \dif u = 2 \int\limits_{1}^{\infty} \widehat \phi(u)\, \dif u +O(Z^{-3/2+\varepsilon}).
\end{align}

 We now combine Lemma~\ref{lemma:small s}, Lemma~\ref{lemma I_s(X)} and \eqref{phihat} together to get that if $Z \leq X^2$, then
\begin{align}
\label{Soddestim}
\begin{split}
S_{\text{odd}} = \frac{X}{W(X)} \sum_{\substack{N(l) \leq Z  \\ l \equiv 1 \bmod {(1+i)^3}}} & \frac {\mu_{[i]}(l)}{N(l^2)} \Bigg(  \int\limits_{0}^{\infty}  \widehat  \phi \left( 1+\tau/\mathcal{L} \right)\bigg(  e^{\tau/2}N(l)\sum_{\substack{ j \in
   \mz[i] \\ j \neq 0}}\left ( \widetilde{g}\left(2\sqrt {N(j) e^{\tau/2}N(l)}\right )-\widetilde{g}\left(\sqrt {N(j) e^{\tau/2}N(l)}\right)\right )  \, \dif \tau\\
& +\int\limits_{0}^{\infty} \widehat \phi \left( 1-\tau/\mathcal{L} \right)    \sum_{\substack {k \in
   \mz[i] \\ k \neq 0}}\left ( \frac 12 g\left(\frac {N(k)}{2}\sqrt{\frac {e^{\tau}}{N(l^2)}}\right)-g\left(N(k)\sqrt{\frac {e^{\tau}}{N(l^2)}}\right)\right )  \dif \tau  \Bigg )  \\
& +\int\limits_{1}^{\infty} \widehat \phi(u)\, \dif u +O\left(X^{-3/4+\varepsilon}+X^{\varepsilon}Z^{-1}+ZX^{\sigma/2-1+\varepsilon}+X^{-1/2+\varepsilon}Z^{\varepsilon} \right).
\end{split}
\end{align}

   Note that for the first two integrals in the above expression, we have
\[ \frac 1{\mathcal{L}} \int\limits_{0}^{\infty} \widehat \phi\left( 1+\tau/\mathcal{L} \right)  \bigg(  e^{\tau/2}N(l)\sum_{\substack{ j \in
   \mz[i] \\ j \neq 0}}\left ( \widetilde{g}\left(2\sqrt {N(j) e^{\tau/2}N(l)}\right )-\widetilde{g}\left(\sqrt {N(j) e^{\tau/2}N(l)}\right)\right )\, \dif \tau  \ll \frac {1}{\mathcal{L}} \int\limits_{0}^{\infty} \widehat \phi\left( 1+\tau/\mathcal{L} \right)  \, \dif \tau \ll 1, \]
   and
\[ \frac {1}{\mathcal{L}}   \int\limits_0^{\infty} \widehat \phi \left( 1-\tau/\mathcal{L} \right)    \sum_{\substack {k \in
   \mz[i] \\ k \neq 0}}\left ( \frac 12 g\left(\frac {N(k)}{2}\sqrt{\frac {e^{\tau}}{N(l^2)}}\right)-g\left(N(k)\sqrt{\frac {e^{\tau}}{N(l^2)}}\right)\right )  \dif \tau \ll \frac {1}{\mathcal{L}}   \int\limits_0^{\infty} \widehat \phi \left( 1-\tau/\mathcal{L} \right)   N(l) e^{-\tau/2}  \dif \tau \ll N(l). \]

    We can therefore use $\Phi(X)$ defined in \eqref{Phi} and partial summation to extend the sum over $l$ to all odd elements in $\mathcal{O}_K$ by introducing an extra error term of size $O\big(X^{\varepsilon}Z^{-3/2}\big)$. We now set $Z=X^{2/3 - \sigma/3}$,  change the order of summation in \eqref{Soddestim} and apply \eqref{W} to derive the desired result.
\end{proof}

\section{Proof of Theorem \ref{quadraticmainthm}}

   We combine Lemma \ref{lemma logd}, Lemma \ref{lemma explicit formula all d}, Lemma \ref{lemma Seven} and Lemma \ref{LemSodd} to arrive at the following
\begin{lemma}\label{lemma precise}
 Suppose that GRH is true. Let $\phi(x)$ be an even Schwartz function whose
Fourier transform $\hat{\phi}(u)$ has compact support in $(-2,2)$ and let $w$ be an even non-zero and non-negative Schwartz function.  For any integer $M \geq 1$, the $1$-level density of low-lying zeros in the family $\mathcal F$ of quadratic Hecke $L$-functions is given by
\begin{align}
\label{Theorem 3.5}
\begin{split}
\mathcal D(\phi;w, X) = \widehat \phi(0)-&\frac{1}2\int\limits_{-1}^{1} \widehat \phi(u)\,du +\frac{\widehat \phi(0)}{\mathcal{L}}  \bigg( \log\frac {32}{\pi^2}+ 2\frac
    {\Gamma'}{\Gamma}\left( \frac 12 \right) + \frac 2{\widehat w(0)}\int\limits_0^{\infty} w(x) \log x\, \dif x \bigg) +J(X) \\
    &+\frac{2}{\mathcal{L} }\int\limits_0^\infty\frac{e^{-x/2}}{1-e^{-x}}\left(\widehat{\phi}(0)-\widehat{\phi}\left(\frac{x}{\mathcal{L} }\right)\right) \dif x + \sum_{m=1}^M \frac{d_m \widehat \phi^{(m-1)}(0)}{\mathcal{L} ^m} +O\left( \frac 1{\mathcal{L} ^{M+1}} \right),
\end{split}
\end{align}
   where $J(X)$ is given as in Lemma \ref{LemSodd} and the coefficients $d_k$ are explicitly computable numbers given in Lemma \ref{lemma Seven}.
\end{lemma}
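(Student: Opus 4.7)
The strategy is a direct assembly of the four previously established results: Lemma~\ref{lemma explicit formula all d} (the explicit-formula expression for $D(\phi;w,X)$), Lemma~\ref{lemma logd} (the asymptotic for the $\log N(c)$-weighted moment), Lemma~\ref{lemma Seven} (the expansion of $S_{\text{even}}$), and Lemma~\ref{LemSodd} (the expansion of $S_{\text{odd}}$). No new analytic input is required; the proof is essentially bookkeeping, and the support hypothesis $\mathrm{supp}\,\widehat\phi\subset(-2,2)$ enters only to render the error term from $S_{\text{odd}}$ harmless.

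First, I would substitute Lemma~\ref{lemma logd} into the first summand of the decomposition of $D(\phi;w,X)$ given by Lemma~\ref{lemma explicit formula all d}. Since $\mathcal{L}=\log X$, the $\log X$ portion cancels the prefactor $\mathcal{L}^{-1}$ and yields the leading constant $\widehat\phi(0)$; what remains is $\frac{\widehat\phi(0)}{\mathcal{L}}\cdot\frac{2}{\widehat w(0)}\int_0^\infty w(x)\log x\,\dif x$, which combines with the already-present $\frac{\widehat\phi(0)}{\mathcal{L}}\bigl(\log(32/\pi^2)+2\Gamma'/\Gamma(1/2)\bigr)$ to form the full $\widehat\phi(0)/\mathcal{L}$ coefficient appearing in~\eqref{Theorem 3.5}. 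The $O(X^{-1/2+\varepsilon})$ error from Lemma~\ref{lemma logd} is absorbed into $O(\mathcal{L}^{-M-1})$.

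Second, I would split the prime sum in Lemma~\ref{lemma explicit formula all d} as $S_{\text{even}}+S_{\text{odd}}$ according to~\eqref{Sodd} and~\eqref{Seven}, and insert the expansions from Lemmas~\ref{lemma Seven} and~\ref{LemSodd}. The function $J(X)$ from Lemma~\ref{LemSodd} and the series $\sum_{m=1}^M d_m\widehat\phi^{(m-1)}(0)/\mathcal{L}^m$ from Lemma~\ref{lemma Seven} carry through verbatim, as does the archimedean integral $\frac{2}{\mathcal{L}}\int_0^\infty\frac{e^{-x/2}}{1-e^{-x}}(\widehat\phi(0)-\widehat\phi(x/\mathcal{L}))\dif x$. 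The one algebraic identification needed is
\begin{equation*}
-\tfrac12\phi(0)+\int_1^{\infty}\widehat\phi(u)\,\dif u \;=\; -\tfrac12\int_{-1}^{1}\widehat\phi(u)\,\dif u,
\end{equation*}
which follows from Fourier inversion $\phi(0)=\int_{\mathbb R}\widehat\phi(u)\,\dif u$ together with the evenness of $\widehat\phi$. This identity is what produces the characteristic second main term $-\tfrac12\int_{-1}^{1}\widehat\phi(u)\,\dif u$ of the symplectic density conjecture.

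Finally, I would verify that all residual errors are $O(\mathcal{L}^{-M-1})$. The error $O(X^{\sigma/6-1/3+\varepsilon})$ from Lemma~\ref{LemSodd} is negligible precisely because $\mathrm{sup}(\mathrm{supp}\,\widehat\phi)=\sigma<2$ forces $\sigma/6-1/3<0$; the $O(X^{-3/4+\varepsilon})$ in~\eqref{Seven} from $j\geq 3$ is likewise negligible. There is no genuine obstacle to overcome at this stage: the heavy analytic work (Poisson summation, the analysis of $k=\square$ terms, the Taylor expansion of $\widehat\phi$ near the origin, and the extraction of $J(X)$) has already been dispatched inside Lemmas~\ref{lemma Seven} and~\ref{LemSodd}. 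The only point demanding care is the sign bookkeeping in the Fourier-inversion identity above, which is the mechanism by which the extended support range $(-2,2)$ materialises into the expected lower order structure.
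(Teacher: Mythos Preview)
Your proposal is correct and takes essentially the same approach as the paper, which simply states that the lemma follows by combining Lemma~\ref{lemma logd}, Lemma~\ref{lemma explicit formula all d}, Lemma~\ref{lemma Seven} and Lemma~\ref{LemSodd}. Your write-up in fact supplies more detail than the paper does, in particular making explicit the Fourier-inversion identity $-\tfrac12\phi(0)+\int_1^{\infty}\widehat\phi(u)\,\dif u=-\tfrac12\int_{-1}^{1}\widehat\phi(u)\,\dif u$ that merges the leading constants from $S_{\text{even}}$ and $S_{\text{odd}}$.
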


   The next lemma allows us to expand $J(X)$ in descending powers of $\mathcal{L} = \log X$.  This is a generalization of \cite[Lemma 3.6]{FPS}.
\begin{lemma}\label{lemma expansion}
Suppose that GRH is true and suppose that $\sigma=\sup(\text{supp } \widehat \phi)< 2$. Then for any integer $M\geq 1$, we have the expansion
\begin{align}
\label{Jexpansion}
 J(X)=\sum_{m=1}^M \frac{c_{w,m} \widehat \phi^{(m-1)}(1)}{\mathcal{L}^m}+ O\left( \frac 1{\mathcal{L} ^{M+1}} \right),
\end{align}
 where the constants $c_{w,m}$ can be given explicitly.
\end{lemma}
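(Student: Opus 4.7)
The plan is to Taylor expand the factors $\widehat\phi(1\pm \tau/\LL)$ around the base point $1$ inside the integral defining $J(X)$ and then evaluate term by term. Set
\begin{align*}
F(\tau)=e^{\tau/2}\sum_{\substack{j\in\mz[i]\\ j\neq 0}}h_1\big(N(j)e^{\tau/2}\big),\qquad G(\tau)=\sum_{\substack{k\in\mz[i]\\ k\neq 0}}h_2\big(N(k)e^{\tau/2}\big),
\end{align*}
so that by \eqref{corollary Sodd in terms of J} we have
$J(X)=\LL^{-1}\int_0^\infty\big(\widehat\phi(1+\tau/\LL)F(\tau)+\widehat\phi(1-\tau/\LL)G(\tau)\big)\dif\tau$.

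The first step is to establish, for each integer $m\geq 0$, the absolute convergence of the integrals $\int_0^\infty \tau^m|F(\tau)|\dif\tau$ and $\int_0^\infty \tau^m|G(\tau)|\dif\tau$. For $F$, the rapid decay $h_1(x)\ll_{A} x^{-A}$ for any $A\geq 1$ yields $F(\tau)\ll e^{(1-A)\tau/2}\sum_{j\neq 0}N(j)^{-A}$, which by taking $A$ large enough is dominated by $e^{-A'\tau}$ for any $A'\geq 1$. For $G$, the GRH-based bound $h_2(x)\ll x^{-3/2+\varepsilon}$ gives $G(\tau)\ll e^{-(3/4-\varepsilon)\tau}\sum_{k\neq 0}N(k)^{-3/2+\varepsilon}$, and the series converges since it is essentially $\zeta_K(3/2-\varepsilon)$. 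With these convergences in hand, we define the candidate coefficients
\begin{align*}
c_{w,m}=\frac{1}{(m-1)!}\int_0^\infty\tau^{m-1}\big(F(\tau)+(-1)^{m-1}G(\tau)\big)\dif\tau.
\end{align*}

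Next, I would apply Taylor's theorem with integral remainder to $\widehat\phi$ at $1$:
\begin{align*}
\widehat\phi(1+s)=\sum_{m=0}^{M-1}\frac{s^m}{m!}\widehat\phi^{(m)}(1)+\frac{s^M}{(M-1)!}\int_0^1(1-t)^{M-1}\widehat\phi^{(M)}(1+ts)\dif t.
\end{align*}
Since $\widehat\phi$ is Schwartz, $\widehat\phi^{(M)}$ is bounded on $\mr$, so the remainder is $O(|s|^M)$ uniformly in $s$. Substituting $s=\pm\tau/\LL$ into the integral for $J(X)$ and exchanging the finite Taylor sum with the integral (justified by the absolute convergence above) produces
\begin{align*}
J(X)=\sum_{m=1}^{M}\frac{c_{w,m}\,\widehat\phi^{(m-1)}(1)}{\LL^m}+O\!\left(\frac{1}{\LL^{M+1}}\int_0^\infty\tau^M\big(|F(\tau)|+|G(\tau)|\big)\dif\tau\right),
\end{align*}
which is exactly the expansion claimed in \eqref{Jexpansion}.

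The hypothesis $\sigma<2$ plays no direct role in this step (it is inherited from the context of Lemma \ref{LemSodd}); what truly matters is the compact support of $\widehat\phi$ combined with the decay rates of $h_1$ and $h_2$. The main obstacle is therefore the verification of those decay estimates, but these have already been recorded in the paragraph preceding Lemma \ref{LemSodd}. Making the constants $c_{w,m}$ explicit in terms of $w$, as asserted in the statement, requires unfolding the definitions of $h_1,h_2,g,g_1$ and, if desired, applying the Mellin-transform identity of Lemma \ref{Melliniden}; this can be done coefficient by coefficient but is not strictly necessary for proving the asymptotic.
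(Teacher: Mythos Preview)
Your proposal is correct and follows essentially the same approach as the paper's proof: both Taylor expand $\widehat\phi$ about $1$, use the decay bounds $h_1(x)\ll x^{-A}$ and $h_2(x)\ll x^{-3/2+\varepsilon}$ to show $F(\tau)$ and $G(\tau)$ decay exponentially (so that $\int_0^\infty \tau^m(|F|+|G|)\,\dif\tau<\infty$), and read off the coefficients $c_{w,m}$. The paper inserts an intermediate truncation of the $\tau$-integral to $[0,\LL]$ before expanding and then re-extends to $[0,\infty)$, whereas you work directly on $[0,\infty)$; your route is slightly cleaner and, as you note, does not actually require $\sigma<2$ at this stage.
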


\begin{proof}
   Note that as  $\sigma=\sup(\text{supp } \widehat \phi)< 2$, we have
\begin{align*}
   J(X)= & \frac 1{\mathcal{L}} \int\limits_{0}^{\mathcal{L} } \bigg( \widehat \phi \left( 1+ \tau/\mathcal{L} \right)   \sqrt{2}e^{\tau/2} \sum_{\substack{ j \in
   \mz[i] \\ j \neq 0}} h_1\big(N(j)e^{\tau/2}\big)  +\widehat \phi \left( 1-\tau/\mathcal{L}  \right)   \sum_{\substack {k \in
   \mz[i] \\ k \neq 0}} h_2\big(N(k)e^{\tau/2}\big)\bigg)\, \dif \tau.
\end{align*}

   Recall that we have the bounds $h_1(x)\ll x^{-N}$ for any $N\geq 1$ and $h_2(x)\ll x^{-3/2+\varepsilon}$ for any $\varepsilon >0$ under GRH.
It follows from this that we can expand $\widehat \phi$ in Taylor series to obtain that
\begin{align}
\label{Jexpand}
\begin{split}
J(X)=& \sum_{m=1}^M \frac {\widehat \phi^{(m-1)}(1)}{(m-1)!\mathcal{L} ^m} \int\limits_{0}^{\mathcal{L} } \bigg(  \tau^{m-1} e^{\tau/2} \sum_{\substack{ j \in
   \mz[i] \\ j \neq 0}} h_1\big(N(j)e^{\tau/2}\big)   + (-\tau)^{m-1} \sum_{\substack {k \in
   \mz[i] \\ k \neq 0}} h_2\big(N(k)e^{\tau/2}\big)\bigg)\, \dif \tau \\
   &\hspace*{3cm} +O\big(\mathcal{L} ^{-M-1}\big),
\end{split}
\end{align}
   since here the error term introduced can be estimated as
\begin{align*}
 \ll \mathcal{L} ^{-M-1} \int\limits_{0}^{\mathcal{L} } \bigg(  \tau^{M}e^{-\tau/2}  + (-\tau)^{M}e^{-(3/4+\varepsilon)\tau} \bigg)\, \dif \tau
 \ll \mathcal{L} ^{-M-1} \int\limits_{0}^{\infty} \bigg(  \tau^{M}e^{-\tau/2}  + (-\tau)^{M}e^{-(3/4+\varepsilon)\tau} \bigg)\, \dif \tau \ll \mathcal{L} ^{-M-1}.
\end{align*}

   We now extend the integral in \eqref{Jexpand} to infinity and note that the error introduced by this extension can be easily shown to be
\[   \ll \sum_{m=1}^M \frac {\widehat \phi^{(m-1)}(1)}{(m-1)!\mathcal{L} ^m} \int\limits^{\infty}_{\mathcal{L}} \bigg(  \tau^{m-1}e^{-\tau}+ (-\tau)^{m-1} e^{-(3/4+\epsilon)\tau} \bigg)\, \dif \tau  \ll \mathcal{L} ^{-M-1}. \]

     We then deduce from this and \eqref{Jexpand} that
\begin{align*}
J(X)=&\sum_{m=1}^M \frac {\widehat \phi^{(m-1)}(1)}{(m-1)!\mathcal{L} ^m} \int\limits_{0}^{\infty} \bigg(  \tau^{m-1}\sqrt{2}e^{\tau/2} \sum_{\substack{ j \in
   \mz[i] \\ (j, 1+i)=1}} h_1\big(N(j)e^{\tau/2}\big)   + (-\tau)^{m-1} \sum_{\substack {k \in
   \mz[i] , k \neq 0}}(-1)^{N(k)} h_2\big(N(k)e^{\tau/2}\big)\bigg)\, \dif \tau \\
   &\hspace*{1cm} +O\big(\mathcal{L} ^{-M-1}\big),
\end{align*}

   As the integral in the above expression converges, the assertion of the lemma now follows from this.
\end{proof}

  We now substitute \eqref{Jexpansion} into \eqref{Theorem 3.5} and expand $\widehat{\phi}\left(x/\mathcal{L} \right)$ into its Taylor series around $0$ so that
\begin{align*}
\widehat{\phi}\left(x/\mathcal{L} \right)=&\sum_{m=0}^M \frac {\widehat \phi^{(m)}(0)}{(m)!\mathcal{L} ^m} \int\limits_{0}^{\infty} \bigg(  \tau^{m-1}\sqrt{2}e^{\tau/2} \sum_{\substack{ j \in
   \mz[i] \\ (j, 1+i)=1}} h_1\big(N(j)e^{\tau/2}\big)   + (-\tau)^{m-1} \sum_{\substack {k \in
   \mz[i] , k \neq 0}}(-1)^{N(k)} h_2\big(N(k)e^{\tau/2}\big)\bigg)\, \dif \tau \\
   &\hspace*{1cm} +O\big(\mathcal{L} ^{-M-1}\big),
\end{align*}
  and an interchange of the series and the integral allow us to deduce \eqref{Dexpansion}. In particular, we see that we have for $m \geq 2$,
\begin{align}
\label{Rwm}
R_{w,m}(\phi)=c_{w,m} \widehat \phi^{(m-1)}(1)+d_m\widehat \phi^{(m-1)}(0)-\frac {2\widehat \phi^{(m-1)}(0)}{(m-1)!}\int\limits_0^\infty\frac{e^{-x/2}x^{m-1}}{1-e^{-x}}\dif x.
\end{align}
   This completes the proof of Theorem \ref{quadraticmainthm}.

\section{Proof of Theorem \ref{oneleveldensityresult}}
\label{The ratios conjecture's prediction}

   We first follow the recipe given in \cite{CFZ} to derive
a suitable version of the ratios conjecture for the family $\mathcal F$.  We start by considering the expression
\begin{equation}
\label{ralphgam}
R(\alpha,\beta)=\frac1{W(X)}\sumn \frac{L\left(1/2+\alpha,\chi_{i(1+i)^5c}\right)}{L\left(1/2+\beta,\chi_{i(1+i)^5c}\right)}.
\end{equation}

 Similar to the treatment in \cite[Section 4.1]{G&Zhao6}, we may approximate $L(s, \chi_{i(1+i)^5c})$ by
\begin{equation}
\label{approxeq}
L\left(s, \chi_{i(1+i)^5c}\right)\approx \sum_{\mathfrak{n} \neq 0} \frac{\chi_{i(1+i)^5c}(\mathfrak{n})}{N(\mathfrak{n})^s}+X_{c}(s)\sum_{\mathfrak{n} \neq 0}
\frac{\chi_{i(1+i)^5c}(\mathfrak{n} )}{N(\mathfrak{n})^{1-s}},
\end{equation}
where $\sum_{\mathfrak{n} \neq 0}$ denotes a sum over non-zero integral ideals in $\mathcal{O}_K$ and
\begin{equation}
\label{xe}
X_{c}(s)=\frac{\Gamma\left(1-s\right)}{\Gamma\left(s \right)}\left(\frac{\pi^2}{32N(c)}\right)^{s-1/2}.
\end{equation}

Writing $\mu_{[i]}$ for the M\"obius function on $K$, we obtain that for $\Re(s)>1$,
\begin{equation}
\label{lemobius}
\frac{1}{L(s,\chi_{i(1+i)^5c})} = \sum_{\mathfrak{m} \neq 0}
\frac{\mu_{[i]}(\mathfrak{m})\chi_{i(1+i)^5c}(\mathfrak{m})}{N(\mathfrak{m})^s}.
\end{equation}

  Applying \eqref{approxeq} and \eqref{lemobius} to $\eqref{ralphgam}$, we see that
\begin{equation}
\label{ragone1}
 R(\alpha, \beta) \approx  R_1(\alpha, \beta) +R_2(\alpha, \beta),
\end{equation}
   where
\[ R_1(\alpha, \beta) = \frac1{W(X)}\sumn \sum_{\mathfrak{m},\mathfrak{n}\neq 0} \frac{\mu_{[i]}(\mathfrak{m})\chi_{i(1+i)^5c}(\mathfrak{nm})}{N(\mathfrak{m})^{1/2+\beta}N(\mathfrak{n})^{1/2 +\alpha}},  \]
and
\[ R_2(\alpha, \beta)= \frac1{W(X)}\sumn X_{c}\left(\frac{1}{2} + \alpha\right)\sum_{\mathfrak{m},\mathfrak{n}\neq 0} \frac{\mu_{[i]}(\mathfrak{m})\chi_{i(1+i)^5c}(\mathfrak{nm})}{N(\mathfrak{m})^{1/2+\beta}N(\mathfrak{n})^{1/2 -\alpha}}. \]

 When $\mathfrak{nm}$ is an odd square, we expect to gain a main contribution to both $R_1$ and $R_2$.  Applying Lemma \ref{lemma count of squarefree}, we have in this case
\begin{align*}
\frac{1}{W(X)}\sumn\chi_{i(1+i)^5c}(\mathfrak{nm})\approx
\prod_{\substack{ \varpi \equiv 1 \bmod {(1+i)^3} \\ \varpi\mid \mathfrak{nm}}}  \left(1+\frac 1{N(\varpi)} \right)^{-1}.
\end{align*}
  We then deduce that, upon writing $\square$ for a perfect square,
\begin{equation*}
R_1(\alpha, \beta) \sim \widetilde R_1(\alpha, \beta)=
\sum_{\mathfrak{nm} = \text{odd } \square} \frac{\mu_{[i]}(\mathfrak{m})}{N(\mathfrak{m})^{1/2+\beta}N(\mathfrak{n})^{1/2 + \alpha}}\prod_{\substack{ \varpi \equiv 1 \bmod {(1+i)^3} \\ \varpi\mid \mathfrak{nm}}}  \left(1+\frac 1{N(\varpi)} \right)^{-1}.
\end{equation*}
 A computation on the Euler product shows that
\begin{align}
\label{ragtilde1}
\widetilde R_1(\alpha,\beta)= \frac{\zeta_K(1+2\alpha)}{\zeta_K(1+\alpha+\beta)}A(\alpha,\beta),
\end{align}
where
\begin{equation} \label{defnofae}
\begin{split}
A(\alpha,\beta) =\left(\frac{2^{1+\alpha+\beta}-2^{\beta-\alpha}}{2^{1+\alpha+\beta}-1}\right)\prod_{\varpi \equiv 1 \bmod {(1+i)^3}}
 & \left(1 -\frac{1}{N(\varpi)^{1+\alpha+\beta}}\right)^{-1} \\
 & \times \left(1-\frac{1}{(N(\varpi)+1)N(\varpi)^{1+2\alpha}} -\frac{1}{(N(\varpi)+1)N(\varpi)^{\alpha+\beta}}\right).
\end{split}
\end{equation}
Note that the product $A(\alpha,\beta)$ is absolutely convergent for $\Re(\alpha)$, $\Re(\beta)> -1/4$. \newline
	
 Similarly, we obtain
\begin{align}
\label{R2}
  R_2(\alpha, \beta) \approx  \widetilde R_2(\alpha, \beta)=\frac{1}{W(X)}\sumn X_{c}\left(\frac{1}{2} + \alpha\right)\widetilde R_1(-\alpha,\beta).
\end{align}

  Combining \eqref{ragone1} with \eqref{ragtilde1} and \eqref{R2}, we deduce the following appropriate version of the ratios conjecture for our family $\mathcal F$ .	
\begin{conjecture}
\label{ratiosconjecture}
 Let $\varepsilon>0$ and let $w$ be an even and nonnegative Schwartz test function on $\rear$ which is not identically zero. For complex numbers $\alpha$ and $\beta$ satisfying $|\Re(\alpha)|< 1/4$, $(\log X)^{-1} \ll \Re(\beta) < 1/4$ and $\Im(\alpha), \Im(\beta) \ll X^{1-\varepsilon}$, we have that
\begin{align*}
 \frac{1}{W(X)} & \sumn \frac{L\left( 1/2 +\alpha,\chi_{i(1+i)^5c} \right)}{L\left( 1/2 + \beta,\chi_{i(1+i)^5c} \right)} \\
&= \frac{\zeta_K(1+2\alpha)}{\zeta_K(1+\alpha+\beta)}A(\alpha,\beta)
+\frac{1}{W(X)}\sumn X_{c}\left(\frac{1}{2}+\alpha\right) \frac{\zeta_K(1-2\alpha)}{\zeta_K(1-\alpha+\beta)}A(-\alpha,\beta)+ O_\varepsilon\big(X^{-1/2+\varepsilon}\big),
\end{align*}
where $A(\alpha,\beta)$ is defined in $\eqref{defnofae}$ and $X_c(s)$ is defined in $\eqref{xe}$.
\end{conjecture}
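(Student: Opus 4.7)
The plan is to derive Conjecture \ref{ratiosconjecture} by formalizing the Conrey--Farmer--Zirnbauer recipe whose key steps have already been sketched in the paragraphs immediately preceding the statement. What remains is to carry out the Euler factorization that identifies $\widetilde R_1(\alpha,\beta)$ with the product $\zeta_K(1+2\alpha)\zeta_K(1+\alpha+\beta)^{-1}A(\alpha,\beta)$, to record the symmetric computation for $R_2$, and to account for the admissible error term.

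First I would substitute the approximate functional equation \eqref{approxeq} into $R(\alpha,\beta)$ together with the M\"obius expansion \eqref{lemobius}, obtaining the decomposition $R(\alpha,\beta)\approx R_1(\alpha,\beta)+R_2(\alpha,\beta)$ in \eqref{ragone1}. Following the CFZ prescription, the only pairs $(\mathfrak{m},\mathfrak{n})$ that contribute to the average over $c$ are those for which the character argument $\mathfrak{m}\mathfrak{n}$ is an odd square; writing $n$ for the primary generator of the ideal $\mathfrak{m}\mathfrak{n}$, Lemma \ref{lemma count of squarefree} applied with this $n$ gives
\[
\frac{1}{W(X)}\sumn\chi_{i(1+i)^5c}(\mathfrak{m}\mathfrak{n})=\prod_{\substack{\varpi\equiv 1\bmod{(1+i)^3}\\ \varpi\mid\mathfrak{m}\mathfrak{n}}}\Bigl(1+\tfrac{1}{N(\varpi)}\Bigr)^{-1}+O\bigl(N(\mathfrak{m}\mathfrak{n})^{\varepsilon}X^{-3/4+\varepsilon}\bigr),
\]
using \eqref{W} to normalize. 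In the complementary (non-square) case the recipe simply discards the contribution.

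Second I would perform the Euler factorization of $\widetilde R_1(\alpha,\beta)$. For each odd prime $\varpi$, the local factor is a finite polynomial in $N(\varpi)^{-(1/2+\alpha)}$ and $N(\varpi)^{-(1/2+\beta)}$ (with $\mu_{[i]}$ truncating the $\varpi$-adic valuation of $\mathfrak{m}$ at $1$ and the square condition pairing up the remaining exponents), and dividing out the obvious factor $\zeta_K(1+2\alpha)\zeta_K(1+\alpha+\beta)^{-1}$ leaves precisely the local Euler factor in \eqref{defnofae}. The prime $\varpi=1+i$ is handled separately: $\chi_{i(1+i)^5c}$ vanishes on it, which forces both $\mathfrak{m}$ and $\mathfrak{n}$ to be coprime to $1+i$ and supplies the rational prefactor $(2^{1+\alpha+\beta}-2^{\beta-\alpha})/(2^{1+\alpha+\beta}-1)$. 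Absolute convergence of the resulting product on $\Re(\alpha),\Re(\beta)>-1/4$ is checked by comparison with $\zeta_K(1+\alpha+\beta)^{-1}\zeta_K(1+2\alpha)^{-1}$. This establishes \eqref{ragtilde1}, and the entirely analogous treatment of $R_2$ yields \eqref{R2} with the dual main term obtained by $\alpha\mapsto-\alpha$ and the functional-equation factor $X_c(1/2+\alpha)$ retained under the sum over $c$.

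The main obstacle, and the reason this statement is posed as a conjecture rather than proved, is the rigorous control of the off-diagonal (non-square) contributions to $R_1$ and $R_2$. The error term from Lemma \ref{lemma count of squarefree} in the non-square case is $O(N(\mathfrak{m}\mathfrak{n})^{3/8+\varepsilon}X^{1/4+\varepsilon})$, which when summed against the ranges of $N(\mathfrak{m})$ and $N(\mathfrak{n})$ allowed by the approximate functional equation vastly exceeds the target $O(X^{-1/2+\varepsilon})$. The CFZ recipe discards these contributions heuristically; any unconditional bound would require square-root cancellation for families of non-square quadratic character sums over $\mathcal{O}_K$ of a strength well beyond present technology. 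Accordingly, the derivation above is exactly a recipe-level justification, and Theorem \ref{oneleveldensityresult} is conditional on Conjecture \ref{ratiosconjecture} precisely at this point.
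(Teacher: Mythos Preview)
Your proposal is correct and follows essentially the same approach as the paper: both carry out the CFZ recipe by inserting the approximate functional equation \eqref{approxeq} and the M\"obius expansion \eqref{lemobius}, retain only the odd-square diagonal via Lemma~\ref{lemma count of squarefree}, compute the Euler product to obtain \eqref{ragtilde1} and \eqref{R2}, and explicitly acknowledge that the off-diagonal contribution is discarded heuristically rather than bounded, which is precisely why the statement is recorded as a conjecture.
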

	
 Similar to the derivation of \cite[Lemma 4.3]{G&Zhao6}, we deduce from Conjecture \ref{ratiosconjecture} the following result needed in the calculation of the $1$-level density.
\begin{lemma}
\label{ratiostheorem}
Assuming the truth of GRH and Conjecture \ref{ratiosconjecture}, we have for any $\varepsilon>0$, $(\log X)^{-1} \ll \Re(r)<1/4$ and $\Im(r)\ll X^{1-\varepsilon}$,
\begin{align*}
\begin{split}
&\frac{1}{W(X)}\sumn \frac{L' \left( 1/2 + r,\chi_{i(1+i)^5c} \right)}{L \left( 1/2 + r,\chi_{i(1+i)^5c} \right)} \\
&= \frac{\zeta'_K(1+2r)}{\zeta_K(1+2r)}+A_{\alpha}(r,r)-\frac 4{\pi}\frac{1}{W(X)}\sumn_{(c,1+i)=1}  X_{c}\left(\frac{1}{2}+r\right) \zeta_K(1-2r)A(-r,r)+ O_{\varepsilon}\big(X^{-1/2+\varepsilon}\big),
\end{split}
\end{align*}
  where
\begin{equation*}
A_{\alpha}(r,r) = \frac{\partial}{\partial \alpha}A(\alpha,\beta)\bigg|_{\alpha=\beta=r}.
\end{equation*}
\end{lemma}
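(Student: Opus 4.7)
The plan is to derive Lemma~\ref{ratiostheorem} by differentiating the identity supplied by Conjecture \ref{ratiosconjecture} in the variable $\alpha$ and then specializing to $\alpha=\beta=r$. Differentiating the left-hand side of the conjecture with respect to $\alpha$ produces exactly $\frac{1}{W(X)}\sumn L'/L(1/2+r,\chi_{i(1+i)^5c})$ after setting $\alpha=\beta=r$, so the work is entirely on the right-hand side and on propagating the error.

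For the first term on the right, write $f(\alpha,\beta)=\zeta_K(1+2\alpha)/\zeta_K(1+\alpha+\beta)$. The two factors each have a simple pole when their argument hits $1$, but on the diagonal $\alpha=\beta$ the poles cancel and $f(r,r)=1$. A short computation gives $f_\alpha(r,r)=\zeta_K'(1+2r)/\zeta_K(1+2r)$. I would then check that $A(r,r)=1$ via a local computation at each prime $\varpi$: the identity $(N(\varpi)+1)^{-1}(N(\varpi)^{-(1+2r)}+N(\varpi)^{-2r})=N(\varpi)^{-(1+2r)}$ collapses the Euler factor to $(1-N(\varpi)^{-(1+2r)})^{-1}(1-N(\varpi)^{-(1+2r)})=1$, and the prefactor involving $2^{1+\alpha+\beta}$ also simplifies to $1$ on the diagonal. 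Combining these gives $\partial_\alpha(f A)\big|_{\alpha=\beta=r}=\zeta_K'(1+2r)/\zeta_K(1+2r)+A_\alpha(r,r)$, which matches the first two terms in the target formula.

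For the swap term $g(\alpha,\beta)=X_c(1/2+\alpha)\zeta_K(1-2\alpha)A(-\alpha,\beta)/\zeta_K(1-\alpha+\beta)$, the key observation is that $1/\zeta_K(1-\alpha+\beta)$ vanishes at $\alpha=\beta=r$, so only the $\alpha$-derivative of this factor survives when we specialize. Using the Laurent expansion $\zeta_K(s)=\tfrac{\pi/4}{s-1}+\gamma_K+O(|s-1|)$ from \eqref{zetaKexpan}, one computes
\[
\frac{\partial}{\partial\alpha}\frac{1}{\zeta_K(1-\alpha+\beta)}\bigg|_{\alpha=\beta=r}=\lim_{s\to 1}\frac{\zeta_K'(s)}{\zeta_K(s)^2}=-\frac{4}{\pi},
\]
and the remaining factors evaluate to $X_c(1/2+r)\zeta_K(1-2r)A(-r,r)$, producing exactly the third term on the right-hand side with the constant $-4/\pi$.

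The main obstacle is that the error term $O_\varepsilon(X^{-1/2+\varepsilon})$ in Conjecture \ref{ratiosconjecture} is not immediately preserved under differentiation. The standard fix, used in the derivation of \cite[Lemma 4.3]{G&Zhao6}, is to note that both sides are analytic in $\alpha$ in a complex neighbourhood of $r$ of radius $\asymp(\log X)^{-1}$, and to recover the $\alpha$-derivative via Cauchy's integral formula on a small circle around $r$; the pointwise bound $O_\varepsilon(X^{-1/2+\varepsilon})$ on that circle then transfers to the derivative, at the cost of absorbing a $\log X$ factor into a slightly larger $\varepsilon$. Carrying this out uniformly in the range $(\log X)^{-1}\ll\Re(r)<1/4$, $\Im(r)\ll X^{1-\varepsilon}$ of the conjecture is the delicate step, but is routine once the analyticity is set up. Assembling the three contributions together with this controlled error yields the claimed expression.
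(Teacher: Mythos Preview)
Your proposal is correct and follows exactly the standard route the paper has in mind: the paper does not spell out a proof but simply points to the analogous derivation of \cite[Lemma 4.3]{G&Zhao6}, which proceeds precisely by differentiating the ratios conjecture in $\alpha$, specializing to $\alpha=\beta=r$, using $A(r,r)=1$ (which the paper also records later, just before \eqref{ANOTHERONELEVELDENSITY}), extracting $-4/\pi$ from the residue of $\zeta_K$ at $1$, and handling the error via Cauchy's formula on a small circle. One minor quibble: in the stated range $(\log X)^{-1}\ll\Re(r)<1/4$ the ratio $\zeta_K(1+2\alpha)/\zeta_K(1+\alpha+\beta)$ is regular at $\alpha=\beta=r$ with no pole-cancellation needed, so your remark about poles cancelling is unnecessary there, though harmless.
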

	
   We now proceed as in \cite[Section 4.4]{G&Zhao6}.  Assuming the truth of GRH, it follows from Lemma \ref{ratiostheorem} that
\begin{align}
\label{RATIOSONELEV}
\begin{split}
D(\phi;w, X)= \frac{1}{W(X)}\sumn & \frac{1}{2\pi i}\int\limits_{(a-1/2)}  \bigg(2 \frac{\zeta'_K(1+2r)}{\zeta_K(1+2r)}+2A_{\alpha}(r,r)-\frac{X_{c}'\left(1/2+r\right)}{X_{c}\left(1/2+r\right)} \\
& -\frac {8}{\pi} X_{c}\left(\frac{1}{2}+r\right)\zeta_K(1-2r)A(-r,r)\bigg) \phi\left(\frac{i \LL r}{2\pi}\right) \dif r + O_{\varepsilon} \left( X^{-1/2+\varepsilon} \right),
\end{split}
\end{align}
  where $D(\phi;w, X)$ is defined in \eqref{D}. Note that the integrand in \eqref{RATIOSONELEV} is analytic in the region $\Re(r) \geq  0$ (in particular it is analytical at $r = 0$).  The assertion of Theorem \ref{oneleveldensityresult} now follows by moving the contour of integration from $\Re(r)=a-1/2$ to
$\Re(r)=0$.

\section{Proof of Theorem \ref{quadraticmainthmratio}} \label{Compartionofresults}

\subsection{Initial treatment}
\label{section compare results}

  In this section, we consider the expansions of the $D(\phi;w, X)$ given in \eqref{Drationconj} as powers of $1/\LL$ with $\LL=\log X$. Recall from \eqref{RATIOSONELEV} that, up to an error term of size $O(\LL^{-2})$, we have
\begin{align}
\label{precomp}
\begin{split}
  D(\phi;w, X)
=\frac{1}{W(X)}\sumn & \frac{1}{2\pi i}\int\limits_{(a')}  \bigg(2 \frac{\zeta'_K(1+2r)}{\zeta_K(1+2r)}+2A_{\alpha}(r,r)-\frac{X_{c}'\left(\frac12+r\right)}{X_{c}\left(\frac12+r\right)}\\
& -\frac {8}{\pi} X_{c}\left(\frac{1}{2}+r\right)\zeta_K(1-2r)A(-r,r)\bigg) \phi\left(\frac{i \LL r}{2\pi}\right) \dif r ,
\end{split}
\end{align}
 where $\LL^{-1}<a'< 1/4$. \newline

  We set
\begin{align} \label{I}
 I=-\frac {8}{\pi} \frac{1}{W(X)}\sumn \frac{1}{2\pi i}\int\limits_{(a')}  \bigg(
 X_{c}\left(\frac{1}{2}+r\right)\zeta_K(1-2r)A(-r,r)\bigg) \phi\left(\frac{i \LL r}{2\pi}\right) \dif r.
\end{align}
  We shall postpone the evaluation of $I$ in the next section and proceed here the treatment on the other terms on the right-hand side of \eqref{precomp}. \newline

  We deduce first from by Lemma \ref{lemma count of squarefree} and \eqref{W}, after partial summation, that
\begin{align}
\label{thirdterm}
\begin{split}
\frac{1}{W(X)} \sumn & \frac{1}{2\pi}
\int\limits_{\mathbb R} \log\left(\frac{32N(c)}{\pi^2}\right) \phi\left(\frac{t \LL}{2\pi}\right) \, \dif t \\
=& \frac{\widehat \phi(0)}{\LL}\frac{1}{W(X)}\sumn \log\left(\frac{32N(c)}{\pi^2}\right) \\
=& \frac{\widehat \phi(0)}{\LL}\Big ( \log \frac{32}{\pi^2}+\LL+ \frac 2{\widehat w(0)}\int\limits_0^{\infty} w(x) \log x \ \dif x  \Big )+O \left( \LL^{-2} \right).
\end{split}
\end{align}
  Next note that, similar to \cite[(4.33)]{G&Zhao6}, we have
\begin{align}
\label{fifthterm}
\begin{split}
 \frac{1}{{W}(X)}  \sumn \frac{1}{2\pi} &
\int\limits_{\mathbb R}\left( \frac{\Gamma'}{\Gamma}\left(\frac 12-it\right)
+\frac{\Gamma'}{\Gamma}\left(\frac 12+it  \right)\right ) \phi\left(\frac{t\LL }{2\pi}\right) \, \dif t\\
&= 2\frac{\Gamma'}{\Gamma}\left(\frac 12\right)\frac{\widehat{\phi}(0)}{\LL} +\frac{2}{\LL} \int\limits_0^\infty\frac{e^{-t/2}}{1-e^{-t}}\left(\widehat{\phi}(0)-\widehat{\phi}\left(\frac{t}{\LL}\right)\right) \dif t.
\end{split}
\end{align}

 Furthermore, we follow the treatment of \cite[Lemma 4.1]{FPS1} to obtain via a direct calculation (noting that $A(r,r)=1$) that
\begin{equation*}
A_\alpha(r,r)+\frac{\zeta'_K(1+2r)}{\zeta_K(1+2r)}=\frac{-1}{N(\varpi) +1} \sum_{\varpi \equiv 1 \bmod {(1+i)^3}} \frac{N(\varpi)\log N(\varpi)}{N(\varpi)^{1+2r}-1}.
\end{equation*}
  It follows from this, after the substitution $u=-i\LL r/(2\pi)$ and interchanging the summations and the integral, that
\begin{align} \label{ANOTHERONELEVELDENSITY}
\begin{split}
\frac{1}{2\pi i}\int\limits_{(a')}   & \bigg(2 \frac{\zeta'_K(1+2r)}{\zeta_K(1+2r)}+2A_{\alpha}(r,r) \bigg) \phi\left(\frac{i\LL r}{2\pi}\right) \dif r \\
=& -\frac{2}{\LL}\sum_{\varpi \equiv 1 \bmod {(1+i)^3}} \frac{N(\varpi)\log N(\varpi)}{N(\varpi)+1} \sum_{j=1}^{\infty} \frac{1}{N(\varpi)^{j}}\int\limits_{\mathcal C'}\phi\left(u\right)\exp \left( -2\pi i u\left(\frac{2j\log N(\varpi)}{\LL}\right) \right) \dif u.
\end{split}
\end{align}
 where $\mathcal C'$ denotes the horizontal line $\Im(u)=-\LL a'/(2\pi)$. \newline

 As $\widehat \phi$ is compactly supported and $\phi(z)=\int_{\rear}\widehat\phi(x)e^{2\pi ixz}\, \dif x$, it follows from integration by parts that uniformly for $-\LL c'/(2\pi) \leq t\leq 0$,
\begin{align*}
\left|\phi(T+it)\right| \ll \frac1{|T|+1}.
\end{align*}
   In view of this, we can shift the contour of the last integration in \eqref{ANOTHERONELEVELDENSITY} from $\mathcal C'$ to $\Im(u) =0$ to deduce that
\begin{align}
\label{equation lemma 4.1}
\begin{split}
\frac{1}{2\pi i}\int\limits_{(a')} & \bigg(2 \frac{\zeta'_K(1+2r)}{\zeta_K(1+2r)}+2A_{\alpha}(r,r)\bigg) \phi\left(\frac{i\LL r}{2\pi}\right) \dif r	\\
& =- \frac 2{\LL}\sum_{\substack{\varpi \equiv 1 \bmod {(1+i)^3} \\ j\geq 1}} \frac{\log N(\varpi)}{N(\varpi)^{j}} \left(  1+\frac 1{N(\varpi)} \right)^{-1} \widehat \phi\left( \frac{2j \log N(\varpi)}{\LL} \right).
\end{split}
\end{align}

\subsection{Evaluation of $I$}
  In this section, we evaluate $I$, defined in \eqref{I}. Our treatment here follows from the proof of \cite[Lemma 4.6]{FPS1}. We deduce from \eqref{defnofae} that
\begin{align*}
A(-\gamma,\gamma)=\frac {3(2-2^{2r})}{4-2^{2r}}\frac {\zeta_K(2)}{\zeta_K(2-2r)}.
\end{align*}
  Substituting the above in the right-hand side of \eqref{I}, we deduce from the definitions of $X_c$ given in \eqref{xe} and a change of variable $r=2\pi i \tau / \LL$ that
\begin{align} \label{Big I}
\begin{split}
I= -\frac {8}{\pi} \frac{\zeta_K(2)}{\LL}\int\limits_{\mathcal C'} &
\Bigg(\frac{\Gamma\left(1/2 - 2\pi i \tau/\LL\right)}{\Gamma\left(1/2 +2\pi i \tau/\LL \right)} \Bigg)\left(\frac{\pi^2}{32}\right)^{2 \pi i \tau/\LL}\left(1+\frac{2-2^{4\pi i \tau/\LL +1}}{4-2^{4\pi i \tau/\LL}}\right)\frac{\zeta_K\left(1-4\pi i \tau/\LL \right)}{\zeta_K\left(2- 4\pi i \tau/\LL\right)} \phi\left(\tau\right)\\
&\times\frac{1}{W(X)}\sumn N(c)^{- 2\pi i \tau/\LL} \dif \tau,
\end{split}
\end{align}
where we also denote $\mathcal C'$ for the horizontal line $\Im(\tau)=- \LL a'/(2\pi)$. \newline

We treat the last sum in \eqref{Big I} by applying Mellin inversion to see that for $0\leq \Re(r) \leq 1/2$,
$$  \sumn N(c)^{-r}= \frac 4{2\pi i} \int\limits_{(2)}\frac{2^{s+r}}{2^{s+r}+1} \frac{\zeta_K(s+r)}{\zeta_K(2(s+r))} X^s \mathcal M w(s) \ \dif s. $$
 We shift the contour of integration to the line $\Re(s)=1/2 -\Re(r)+\varepsilon$ to encounter a simple pole at $s=1-r$. On the new line of integration, the convexity bound (see \cite[Exercise 3, p. 100]{iwakow}), together with the rapid decay of $\mathcal M w$, gives
\begin{align*}
  \zeta_K(s) \ll (1+|s|^2)^{1/4+\varepsilon}.
\end{align*}
With this and recalling that the reside of $\zeta_K(s)$ at $s=1$ is $\pi/4$, we get
$$ \sumn N(c)^{-r}= \frac{2\pi}{3\zeta_K(2)}X^{1-r}\mathcal M w(1-r)  +O_{\varepsilon,w}\left(\big(|\Im(r)|+1\big)^{1/2+\varepsilon}X^{1/2-\Re(r)+\varepsilon}\right). $$
Combining the above with \eqref{W}, we deduce that for any $\varepsilon>0$ and $0\leq \Re(r) \leq 1/2$,
\begin{equation*}
\frac{1}{W(X)}\sumn N(c)^{-r} = \frac{2}{\widehat{w}(0)}X^{-r}\mathcal M w(1-r) +O_{\varepsilon,w}\left(\big(|\Im(r)|+1\big)^{1/2+\varepsilon}X^{-1/2-\Re (r)+\varepsilon}\right).
\end{equation*}
	
 For small $\varepsilon, \eta>0$, we change the contour $\mathcal C'$ in \eqref{Big I} to the path
$$ C=C_0\cup C_1\cup C_2, $$
where
$$C_0= \{ \Im(\tau)= 0 , |\Re(\tau)| \geq \LL^\varepsilon \},
\quad C_1= \{ \Im(\tau) = 0 ,  \eta \leq |\Re(\tau)| \leq \LL^\varepsilon \}, \quad C_2= \{ |\tau| = \eta, \Im(\tau) \leq 0 \}.  $$	
 As $\phi$ decays rapidly, the integration of $I$ over $C_0$ can be shown to be negligible.
We now apply the Taylor expansion to treat the integration of $I$ over $C_1\cup C_2$ by noting that
\begin{align*}
 \frac{\Gamma\left(1/2 - 2\pi i \tau/\LL \right)}{\Gamma\left(1/2 + 2\pi i \tau/\LL \right)} =1-2\frac {\Gamma'(1/2)}{\Gamma(1/2)}\frac{2\pi i \tau}\LL +O\bigg( \frac{|\tau|^2}{\LL^2}\bigg),
\end{align*}
  and that (see \cite[Formula 2, Section 8.366]{GR})
\begin{align*}
 -\frac {\Gamma'(1/2)}{\Gamma(1/2)}=2\log 2+\gamma.
\end{align*}

Using Taylor expansion and \eqref{zetaKexpan}, we get
\[   \left(1+\frac{2-2^{4\pi i \tau/\LL+1}}{4-2^{4\pi i \tau/\LL}}\right)\frac{1}{\zeta_K\left(2- 4\pi i \tau/\LL\right)}
  = \frac {1}{\zeta_K(2)}+\left ( \frac {1}{\zeta_K(2)} \left (-\frac {2\log 2}{3} \right) +\frac {\zeta'_K(2)}{\zeta_K^2(2)} \right )\frac{4\pi i \tau}\LL+O\bigg( \frac{|\tau|^2}{\LL^2}\bigg), \]
  and
  \[   \zeta_K\left(1-\frac{4\pi i \tau}\LL\right)= -\frac {\pi}{4}\cdot \frac \LL{4\pi i \tau}+\gamma_K  +O\bigg( \frac{|\tau|}{\LL}\bigg) . \]

Using the above formulas, we get, after a short computation,
\begin{align*}
&-\frac {8}{\pi} \frac{\zeta_K(2)}{\LL}
\frac{\Gamma\left( \frac{1}{2} -\frac{2\pi i \tau}{\LL} \right)}{\Gamma\left(\frac{1}{2} + \frac{2\pi i \tau}{\LL} \right)} \left(\frac{\pi^2}{32}\right)^{\frac{2 \pi i \tau}{\LL}}\left(1+\frac{2-2^{\frac{4\pi i \tau}\LL+1}}{4-2^{\frac{4\pi i \tau}\LL}}\right)\frac{\zeta_K\left(1-\tfrac{4\pi i \tau}\LL\right)}{\zeta_K\left(2-\tfrac{4\pi i \tau}\LL\right)} \phi\left(\tau\right)\frac{1}{W(X)}\sumn N(c)^{-\frac{2\pi i \tau}\LL} \\
&=    \frac 1{2\pi i\tau} \bigg( 1+ \frac{2\pi i \tau }\LL\bigg( 2\gamma +2 \log 4 + \log \left( \frac {\pi^2}{32} \right)+2\frac{\zeta'_K(2)}{\zeta_K(2)} -\frac{4}{3} \log 2 -\frac {8}{\pi} \gamma_K -\frac{\mathcal M w'(1)}{\mathcal Mw(1)} \bigg) +O\bigg( \frac{|\tau|^2}{\LL^2}\bigg)\bigg)\phi(\tau) e^{-2\pi i \tau} \\
& \hspace*{8cm} +O_{\varepsilon,w}\big(X^{-1/2+\varepsilon}\big).
\end{align*}

   We then deduce that
$$ I = \frac{1}{2\pi i} \int\limits_{C_1\cup C_2} \frac {\phi(\tau)}{\tau}  e^{-2\pi i \tau} \dif \tau+I'+ O_{w}(\LL^{-2}),$$
  where, combining the logarithm terms,
\begin{align*}
I' =&\frac 1{\LL} \bigg( 2\gamma+ \log \left( \frac {\pi^2}{2^{7/3}} \right)+2\frac{\zeta'_K(2)}{\zeta_K(2)}-\frac {8}{\pi} \gamma_K -\frac{\mathcal M w'(1)}{\mathcal Mw(1)}  \bigg) \int\limits_{C_1\cup C_2}\phi(\tau) e^{-2\pi i \tau} \dif \tau \\
 =&   \frac 1{\LL} \bigg( 2\gamma+\log \left( \frac {\pi^2}{2^{7/3}} \right)+2\frac{\zeta'_K(2)}{\zeta_K(2)} -\frac {8}{\pi} \gamma_K -\frac{\mathcal M w'(1)}{\mathcal Mw(1)} \bigg) \int\limits_{\mathbb R}\phi(\tau) e^{-2\pi i \tau} \dif \tau+O(\LL^{-2}) \\
=&  \frac {\widehat \phi(1)}{\LL} \bigg( 2\gamma+\log \left( \frac {\pi^2}{2^{7/3}} \right) +2\frac{\zeta'_K(2)}{\zeta_K(2)} -\frac {8}{\pi} \gamma_K -\frac{\mathcal M w'(1)}{\mathcal Mw(1)} \bigg) +O(\LL^{-2}).
\end{align*}	

  Similar to the treatment of $I_1$ in the proof of \cite[Lemma 4.6]{FPS1}, we have
\begin{align*}
 \int\limits_{C_1\cup C_2} \frac 1{2\pi i\tau} \phi(\tau) e^{-2\pi i \tau} \dif \tau & =\int\limits^{\infty}_1 \widehat \phi(\tau) \dif \tau +O(\LL^{-2}).
\end{align*}	

  Thus, we conclude that
\begin{align*}
I = \int\limits^{\infty}_1 \widehat \phi(\tau) \dif \tau+\frac {\widehat \phi(1)}{\LL} \bigg( 2\gamma+\log \left( \frac {\pi^2}{2^{7/3}} \right)+2\frac{\zeta'_K(2)}{\zeta_K(2)} -\frac {8}{\pi} \gamma_K -\frac{\mathcal M w'(1)}{\mathcal Mw(1)} \bigg)  +O(\LL^{-2}).
\end{align*}

   Combining the above expression for $I$ and \eqref{precomp}--\eqref{fifthterm}, \eqref{equation lemma 4.1} together, we deduce that the expression  \eqref{Dexpansionratio} is valid.

\subsection{Comparing terms}

   In this section we show that the expression given in \eqref{Dexpansionratio} is in agreement with that given in \eqref{Sstar} when $\sigma=\text{sup}(\text{supp } \widehat \phi)<2$. In fact,  applying \eqref{thirdterm} and \eqref{fifthterm} in \eqref{Drationconj} and comparing it with \eqref{Sstar}, we see that, with the help of Lemma \ref{lemma count of squarefree}, it suffices to show that
\begin{align}
\label{precomp1}
\begin{split}
&-\frac 2{\LL W(X)} \underset{(c, 1+i) =1 }{\sum \nolimits^{*}}  w\left( \frac {N(c)}X \right) \sum_{j \geq 1} S_j(\chi_{i(1+i)^5c},\LL;\hat{\phi}) \\
=& \frac{1}{W(X)}\sumn \frac{1}{2\pi}
\int\limits_{\mathbb R} \bigg(2 \frac{\zeta'_K(1+2it)}{\zeta_K(1+2it)} + 2A_{\alpha}(it,it)-\frac {8}{\pi}  X_{c}\left(\frac{1}{2}+it\right)\zeta_K(1-2it)A(-it,it) \bigg) \,
\phi\left(\frac{t\LL}{2\pi}\right) \, \dif t \\
=& \frac{1}{W(X)}\sumn \frac{1}{2\pi i}\int\limits_{(a')}  \bigg(2 \frac{\zeta'_K(1+2r)}{\zeta_K(1+2r)}+2A_{\alpha}(r,r)
-\frac {8}{\pi} X_{c}\left(\frac{1}{2}+r\right)\zeta_K(1-2r)A(-r,r)\bigg) \phi\left(\frac{i\LL r}{2\pi}\right) \dif r,
\end{split}
\end{align}
 where $(\log X)^{-1}<a'< 1/4$. \newline

  Now, similar to the treatment in Section~\ref{anaprime},  we write
\begin{align}
\label{Sdecomp}
-\frac 2{\LL W(X)} \underset{(c, 1+i) =1 }{\sum \nolimits^{*}}  w\left( \frac {N(c)}X \right) \sum_{j \geq 1}S_j(\chi_{i(1+i)^5c},\LL;\hat{\phi})  = S_{\text{odd}}+S_{\text{even}},
\end{align}
  where
\[ S_{\text{odd}}=-\frac 2{\LL W(X)} \underset{(c, 1+i) =1 }{\sum \nolimits^{*}} w\left( \frac {N(c)}X \right) \sum_{\substack{j \geq 1 \\ j \equiv 1 \pmod 2}} S_j(\chi_{i(1+i)^5c},\LL ;\hat{\phi}), \]
and
\[ S_{\text{even}}= -\frac 2{\LL W(X)} \underset{(c, 1+i) =1 }{\sum \nolimits^{*}} w\left( \frac {N(c)}X \right) \sum_{\substack{j \geq 1 \\ j \equiv 0 \pmod 2}} S_j(\chi_{i(1+i)^5c},\LL ;\hat{\phi}). \]

  We then deduce from \eqref{equation lemma 4.1}, \eqref{precomp1}, \eqref{Sdecomp} and \eqref{Seven} that it remains to show that
\begin{align}
\label{seccomp}
&S_{\text{odd}}
=I +O(\LL^{-2}),
\end{align}
  where $I$ is defined in \eqref{I}.

\subsection{Evaluation of $S_{\text{odd}}$ }

  In this section, we evaluation $S_{\text{odd}}$ to the first lower order term. Our treatment here follows largely the approach in the proof of \cite[Theorem 1.1]{FPS1}. We recall from \eqref{LemSodd} that
\begin{align}
\label{Sodddecomp}
 S_{\text{odd}}  =   \int\limits_{1}^{\infty} \widehat \phi(u)\, \dif u+ J(X)+O\big(\LL^{-2} \big),
\end{align}
   where
\begin{align*}
 J(X)  =    \frac 1{\LL} \int\limits_{0}^{\infty} \bigg( \widehat \phi( 1+ \tau/\LL  )   e^{\tau/2} \sum_{\substack{ j \in
   \mz[i] \\  j \neq 0}} h_1\big(N(j)e^{\tau/2}\big)  +\widehat \phi \left( 1-\tau/\LL  \right)   \sum_{\substack {k \in
   \mz[i] \\ k \neq 0}} h_2\big(N(k)e^{\tau/2}\big)\bigg)\, \dif \tau,
\end{align*}

  We now evaluate $h_1(x)$ by applying the Mellin inversion to recast it as
\begin{align*}
h_1(x) &= \frac{3\zeta_K(2)}{\pi \widehat w(0)}\frac 1{2 \pi i} \int\limits_{(5/2)}\sum_{\substack{ l \equiv 1 \bmod {(1+i)^3}}}(2^{-z}-1)\frac {\mu_{[i]}(l)}{N(l)^{1+z}} \mathcal M g_1(z)\frac{\dif z}{x^{z}}  \\
&= \frac{3\zeta_K(2)}{\pi \widehat w(0)}\frac 1{2 \pi i} \int\limits_{(5/2)} \frac{2^{-z}-1}{( 1- 2^{-1-z})\zeta_K(1+z)}
\mathcal M g_1(z)\frac{\dif z}{x^{z}}.
\end{align*}
Similarly,	we have, with a change of variables $z \to -z$,
\begin{align*}
h_2 (x) &= \frac{3\zeta_K(2)}{\pi \widehat w(0)}\frac 1{2 \pi i} \int\limits_{(1/2)}\sum_{\substack{l \equiv 1 \bmod {(1+i)^3}}}(2^{z-1}-1)\frac {\mu_{[i]}(l)}{N(l)^{2-z}} \mathcal M g(z)\frac{\dif z}{x^{z}}  \\
&= \frac{3\zeta_K(2)}{\pi \widehat w(0)}\frac 1{2 \pi i} \int\limits_{(-1/2)} \frac{2^{-z-1}-1}{( 1- 2^{-2-z})\zeta_K(2+z)}
\mathcal M g(-z)x^{z} \dif z \\
&= \frac{3\zeta_K(2)}{\pi \widehat w(0)}\frac 1{2 \pi i} \int\limits_{(-5/4)} \frac{2^{-z-1}-1}{( 1- 2^{-2-z})\zeta_K(2+z)}
\mathcal M g(-z)x^{z} \dif z .
\end{align*}

 With the above expressions for $h_1(x)$ and $h_2(x)$, we can write $J(x)$ given in \eqref{corollary Sodd in terms of J} as
\begin{align}
\label{equation preTaylor}
\begin{split}
 J(X)=& \frac{3\zeta_K(2)}{\LL \pi \widehat w(0)} \frac 1{2 \pi i}\int\limits_{0}^{\infty} \bigg( \widehat \phi \left( 1+\tau/\LL  \right) \int\limits_{(5/2)} \frac{4(2^{-z}-1)\zeta_K(z)}{( 1- 2^{-1-z})\zeta_K(1+z)} \mathcal Mg_1(z)\frac{\dif z}{e^{(z-1)\tau/2}} \\
 &\hspace*{3cm} +\widehat \phi( 1- \tau/\LL  )   \int\limits_{(-5/4)} \frac{ 4(2^{-z-1}-1)\zeta_K(-z)}{( 1- 2^{-2-z})\zeta_K(2+z)} \mathcal M g(-z)e^{z\tau/2} \dif z \bigg) \, \dif \tau \\
 =& \frac{3\zeta_K(2)}{\LL \pi \widehat w(0)}\frac 1{2 \pi i}\bigg( \int\limits_{(3/2)} \frac{4(2^{-z}-1)\zeta_K(z)}{( 1- 2^{-1-z})\zeta_K(1+z)} \mathcal M g_1 (z) \int\limits_{0}^{\infty}  \widehat \phi \left( 1+\tau/\LL  \right)e^{-(z-1)\tau/2} \dif \tau \dif z \\
 &\hspace*{3cm} +   \int\limits_{(-5/4)} \frac{4(2^{-z-1}-1)\zeta_K(-z)}{( 1- 2^{-2-z})\zeta_K(2+z)} \mathcal M g(-z) \int\limits_{0}^{\infty}\widehat\phi( 1- \tau/\LL  ) e^{z\tau/2} \dif \tau \dif z \bigg)\,.
\end{split}
\end{align}	

Now we consider the Taylor expansions, centered at $1$, of $\widehat\phi \left( 1+ \tau/\LL  \right)$ and $\widehat\phi \left( 1- \tau/\LL  \right)$ in \eqref{equation preTaylor}. By keeping only the constant terms, we see that their contribution to $J(X)$ equals, with another change of variables $z \to z+1$ in the first integral,
\begin{align}
\label{Jconst}
 \frac{3\zeta_K(2)}{\LL \pi \widehat w(0)}\frac{ 8\widehat \phi( 1)}{2 \pi i}\bigg( \int\limits_{(1/2)} \frac{(2^{-z-1}-1)\zeta_K(z+1)}{( 1- 2^{-2-z})\zeta_K(2+z)} \mathcal M g_1(z+1) \frac{ \dif z}{z}-  \int\limits_{(-5/4)} \frac{(2^{-z-1}-1)\zeta_K(-z)}{( 1- 2^{-2-z})\zeta_K(2+z)} \mathcal M g(-z) \frac{\dif z}z \bigg).
\end{align}
  We now shift the contour of the last integration to the line $\Re(z) = 1/2$. We apply Lemma \ref{Melliniden} to see that the quantity in \eqref{Jconst} equals
\begin{align*}
  \frac{3\zeta_K(2)}{\LL \pi \widehat w(0)}\frac{ 8\widehat \phi( 1)}{2 \pi i}R,
\end{align*}
  where $R$ is the residue of the function
\begin{align*}
\frac{(2^{-z-1}-1)\zeta_K(-z)}{( 1- 2^{-2-z})\zeta_K(2+z)}  \frac{\mathcal M g(-z)}z
\end{align*}
   at $z=0$.  We observe via integration by parts that
\begin{align*}
 \mathcal M g(-z) =\int\limits^{\infty}_0 g(t)t^{-z}\frac {\dif t}{t}=\int\limits^{\infty}_0 g(t) \dif \Big ( \frac {t^{-z}}{-z} \Big )=\frac 1{z}\int\limits^{\infty}_0t^{-z} g'(t) \dif t.
\end{align*}
  As
  \[ \lim_{z \rightarrow 0}\int\limits^{\infty}_0t^{-z} g'(t) \dif t=g(0)=\widehat{w}(0)>0, \]
it follows that $\mathcal M g(-z)$ has a pole at $z=0$.  We apply \eqref{w0} and get that around $z=0$,
\begin{align}
\label{Mgz0}
\begin{split}
 \mathcal M g(-z) =& \frac {-g(0)}{z}-\int\limits^{\infty}_0(\log t) g'(t) \dif t+O(z^2)=\frac {-\widetilde{w}(0)}{z}-\int\limits^{\infty}_0(\log t) g'(t) \dif t+O(z^2) \\
=& -\frac {\pi}{2}\frac {\widehat{w}(0)}{z}-\int\limits^{\infty}_0(\log t) g'(t) \dif t+O(z^2).
\end{split}
\end{align}

   Moreover, we have around $z=0$,
\begin{equation} \label{otherres1}
 \frac{2^{-z-1}-1}{1- 2^{-2-z}}= -\frac 23+\frac{-\log 2 (\frac 38)+\frac 12 (\log 2) \frac 14}{(1- 2^{-2})^2}z=-\frac 23-\frac 49 (\log 2)z
 \end{equation}
 and
 \begin{equation} \label{otherres2}
\frac{\zeta_K(-z)}{\zeta_K(2+z)}= \frac{\zeta_K(0)}{\zeta_K(2)}+\frac{-\zeta'_K(0)\zeta_K(2)-\zeta_K(0)\zeta'_K(2)}{\zeta^2_K(2)}z+O(z^2).
\end{equation}
Using \eqref{Mgz0}, \eqref{otherres1} and \eqref{otherres2}, we get
\begin{equation} \label{R}
\begin{split}
  R=-\frac {\pi}{2}\widehat{w}(0) \left( -\frac 23 \right) & \frac{-\zeta'_K(0)\zeta_K(2)-\zeta_K(0)\zeta'_K(2)}{\zeta^2_K(2)} \\
  & - \frac {\pi}{2}\widehat{w}(0) \left( -\frac 49 \log 2 \right) \frac{\zeta_K(0)}{\zeta_K(2)} -\frac 23\frac{\zeta_K(0)}{\zeta_K(2)} \left( -\int\limits^{\infty}_0(\log t) g'(t) \dif t \right).
  \end{split}
\end{equation}

  To further simplify $R$, we use the fact that $s\Gamma(s) = 1$ (see \cite[\S 10]{Da}) when $s = 0$  and the functional equation for $\zeta_K(s)$ (see \cite[Theorem 3.8]{iwakow}):
\begin{align*}
 \pi^{-s}\Gamma(s)\zeta_K(s) = \pi^{-(1-s)}\Gamma(1 -s)\zeta_K(1 -s)
\end{align*}
  to obtain that $\zeta_K(0) = -1/4$. \newline

  We further use the relation (see \cite[\S 10]{Da})
\begin{align*}
  \Gamma(s)\Gamma(1-s)=\frac {\pi}{\sin (\pi s)}
\end{align*}
   to derive that
\begin{align}
\label{Zetaexp}
  \zeta_K(1 -s)=\pi^{-2s}\Gamma(s)^2\sin(\pi s)\zeta_K(s)
\end{align}
   Applying \eqref{zetaKexpan}, we see that around $s=1$, we have
\begin{align*}
  \zeta_K(s)\sin (\pi s)=-\frac {\pi^2}{4}-\pi \gamma_K(s-1)+O((s-1)^2).
\end{align*}

  Using the above expansion and the fact that $\Gamma'(1)=-\gamma$ (see \cite[Formula 1, Section 8.366]{GR} by noting also that $\Gamma(1)=1$), we take the derivative on both sides of \eqref{Zetaexp} to see that
\begin{align*}
  -\zeta'_K(0)=-\frac {1}{\pi}\gamma_K+\frac {\gamma}{2}+\frac {\log \pi}{2}.
\end{align*}

    Now, inserting the values of $\zeta_K(0), -\zeta'_K(0)$ into \eqref{R}, together with a short calculation, we obtain that
\begin{align}
\label{JXexp}
\begin{split}
  J(X)=& \frac{3\zeta_K(2)}{\LL \pi \widehat w(0)}\frac{ 8\widehat \phi( 1)}{2 \pi i}R+O(\LL^{-2}) \\
=& \frac{8\widehat \phi( 1)}{\LL}\left(-\frac {\gamma_K}{\pi}+\frac {\gamma}{2}+\frac {\log \pi}{2}+\frac 14\frac {\zeta'_K(2)}{\zeta_K(2)}+ \left( \frac 23 \log 2 \right)\zeta_K(0)-\frac {1}{2\pi \widehat{w}(0)} \left( \int\limits^{\infty}_0(\log t) g'(t) \dif t \right) \right)+O(\LL^{-2}).
\end{split}
\end{align}

  We evaluate the last integral above by noticing that for small $\eta>0$, we have
\begin{align*}
\int\limits_0^{\infty} (\log x) g'(x) \dif x &=\int\limits_0^{\infty} \sqrt{2} \log x \widetilde w'(\sqrt{2}x) \dif x = \int\limits_0^{\infty} \log  \bigg(\frac x{\sqrt{2}}\bigg) \widetilde w'(x) \dif x \\
&= \widetilde w(0) \log (\sqrt{2})  +\int\limits_{\eta}^{\infty} (\log x) \widetilde w'(x) \dif x +O\big(\eta \log (\eta^{-1})\big) \\
\end{align*}
Now the above expression becomes, after integration by parts,
\[ \widetilde w(0) \log (\sqrt{2}) -\int\limits_{\eta}^{\infty} \frac{\widetilde w(x)-\widetilde w(0)I_{[0,1]}(x)}x \dif x +O\big(\eta \log (\eta^{-1})\big), \]
where $I_{[0,1]}$ is the characteristic function of the interval $[0,1]$. \newline
	
  By evaluating $\widetilde{w}(x)$ in polar coordinates, we see that
\begin{align*}
     \widetilde{w}(x) =& 4\int\limits^{\pi/2}_0\int\limits^{\infty}_0\cos (2\pi r x \sin \theta)w(r^2) \ r \dif r \dif \theta   .
\end{align*}

  It follows from this and by letting $\eta \rightarrow 0^+$ and using \cite[Example (e) on page 132]{V}, we obtain that
\begin{equation} \label{logxg'}
\begin{split}
\int\limits_0^{\infty} (\log x )g'(x) \dif x =& \widetilde w(0) \log (\sqrt{2}) -\int\limits_{0}^{\infty} \frac{\widetilde w(x)-\widetilde w(0)I_{[0,1]}(x)}x \dif x  \\
=& \widetilde w(0) \log (\sqrt{2}) -4\int\limits^{\infty}_0w(r^2)r\int\limits^{\pi/2}_0 \left( \int\limits^1_0 \frac {\cos (2\pi r x \sin \theta )-1}{x}  \dif x + \int\limits^{\infty}_1 \frac {\cos (2\pi r x \sin \theta )}{x}  \dif x \right) \dif \theta \dif r
\end{split}
\end{equation}

Now the inner-most integrals over $x$ become
\[=  \int\limits^{2\pi r  \sin \theta }_0 \frac {\cos (u)-1}{u}  \dif u + \int\limits^{\infty}_{2\pi r \sin \theta} \frac {\cos (u)}{u}  \dif u  =  \gamma +\log \big( 2\pi r \sin \theta \big ) . \]
Hence the expression in \eqref{logxg'} is
\[ \widetilde w(0) \log (\sqrt{2}) +\frac {\pi \gamma \widehat{w}(0)}{2} +\frac {\pi \log \pi \widehat{w}(0)}{2}+\frac {\pi \log 2 \widehat{w}(0)}{2}+\frac {\pi}{2} \int\limits^{\infty}_0w(r)\log r \ \dif r+ 2\int\limits^{\infty}_0w(r) \dif r\int\limits^{\pi/2}_0  \log \big( \sin \theta \big ) \dif \theta. \]
  As we have (see \cite[Formula 3, Section 4.224]{GR})
\begin{align*}
 \int\limits^{\pi/2}_0  \log \big( \sin \theta \big ) \dif \theta=-\frac {\pi}{2}\log 2.
\end{align*}

  We thus conclude that
\begin{align*}
 \int\limits_0^{\infty} (\log x )g'(x) \dif x
&= \frac {\pi \widehat{w}(0)}{4}  \log 2 +\frac {\pi \gamma \widehat{w}(0)}{2} +\frac {\pi \log \pi \widehat{w}(0)}{2}+ \frac {\pi}{2} \mathcal M w'(1).
\end{align*}
  Applying this to \eqref{JXexp}, we see that
$$ J(X)=\frac {\widehat \phi(1)}{\LL} \bigg( 2\gamma+2\log 4+\log \left( \frac {\pi^2}{32} \right)+2\frac{\zeta'_K(2)}{\zeta_K(2)}- \frac 43 \log 2 -\frac {8}{\pi} \gamma_K -\frac{\mathcal M w'(1)}{\mathcal Mw(1)} \bigg)  +O(\LL^{-2}). $$
  With the above expression for $J(X)$ and \eqref{Sodddecomp}, we conclude that the expression given in \eqref{seccomp} is valid and this completes the proof of Theorem \ref{quadraticmainthmratio}.

\vspace{0.1in}

\noindent{\bf Acknowledgments.} P. G. is supported in part by NSFC grant 11871082 and L. Z. by the FRG Grant PS43707 and the Goldstar Award PS53450 from the University of New South Wales (UNSW).  Parts of this work were done when P. G. visited UNSW in September 2019. He wishes to thank UNSW for the invitation, financial support and warm hospitality during his pleasant stay. Finally, the authors thank the anonymous referee for his/her very careful reading of this manuscript and many helpful comments and suggestions.

\bibliography{biblio}
\bibliographystyle{amsxport}

\vspace*{.5cm}

\noindent\begin{tabular}{p{8cm}p{8cm}}
School of Mathematical Sciences & School of Mathematics and Statistics \\
Beihang University & University of New South Wales \\
Beijing 100191 China & Sydney NSW 2052 Australia \\
Email: {\tt penggao@buaa.edu.cn} & Email: {\tt l.zhao@unsw.edu.au} \\
\end{tabular}

\end{document}